  \providecommand\BibTeX{{%
    \normalfont B\kern-0.5em{\scshape i\kern-0.25em b}\kern-0.8em\TeX}}}
\def\righttail{\mathrel{%
  \makebox[.2pt][l]{$\righttailhelper$}\righttailhelper\mkern-4mu-}}
\def\righttailhelper{\scalebox{.5}[1]{$\succ$}}
\newcommand{\Tfrac}[2]{%
  \mathchoice
    {\ooalign{%
      $\genfrac{}{}{1.2pt}0{#1}{#2}$\cr%
      $\color{white}\genfrac{}{}{.4pt}0{\phantom{#1}}{\phantom{#2}}$}}%
    {\ooalign{%
      $\genfrac{}{}{1.2pt}1{#1}{#2}$\cr%
      $\color{white}\genfrac{}{}{.4pt}1{\phantom{#1}}{\phantom{#2}}$}}%
    {\ooalign{%
      $\genfrac{}{}{1.2pt}2{#1}{#2}$\cr%
      $\color{white}\genfrac{}{}{.4pt}2{\phantom{#1}}{\phantom{#2}}$}}%
    {\ooalign{%
      $\genfrac{}{}{1.2pt}3{#1}{#2}$\cr%
      $\color{white}\genfrac{}{}{.4pt}3{\phantom{#1}}{\phantom{#2}}$}}%
}
\newtheorem{theorem}{Theorem}[section]
 \newtheorem{lemma}[theorem]{Lemma}
 \newtheorem{prop}[theorem]{Proposition}
 \newtheorem{cor}[theorem]{Corollary}
 \theoremstyle{definition}
 \newtheorem{definition}[theorem]{Definition}
 \newtheorem{example}[theorem]{Example}
 \newtheorem{remark}[theorem]{Remark}
 \newtheorem{notation}[theorem]{Notation}
 \theoremstyle{remark}
\newcommand{\commment}[1]{}
\newcommand{\amp}{\mathop{\&}}
\newcommand{\marginnote}[1]{\marginpar{\raggedright\tiny{#1}}}
\newcommand{\bba}{\mathbb{A}}
\newcommand{\bbs}{\mathbb{S}}
\newcommand{\bbA}{\mathbb{A}}
\newcommand{\bbas}{\mathbb{A}^{\delta}}
\newcommand{\bbss}{\mathbb{S}^{\delta}}
\newcommand{\kbbas}{K(A^{\delta})}
\newcommand{\obbas}{O(A^{\delta})}
\newcommand{\jir}{J^{\infty}(A^{\delta})}
\newcommand{\mir}{M^{\infty}(A^{\delta})}
\newcommand{\jira}{J^{\infty}(A)}
\newcommand{\mira}{M^{\infty}(A)}
\newcommand{\jty}{J^{\infty}}
\newcommand{\mty}{M^{\infty}}
\newcommand{\nomi}{\mathbf{i}}
\newcommand{\nomj}{\mathbf{j}}
\newcommand{\cnomm}{\mathbf{m}}
\newcommand{\cnomn}{\mathbf{n}}
\renewcommand{\phi}{\varphi}
\renewcommand{\vec}[1]{\overline{#1}}
\newcommand{\oz}{\overline{z}}
\renewcommand{\epsilon}{\varepsilon}
\tikzset{Ske/.style={draw,rectangle,rounded corners=3pt}}
\tikzset{PIA/.style={draw,rectangle,rounded corners=3pt,dashed}}
\begin{document}

\title{Slanted  Canonicity of Analytic Inductive Inequalities}

\author{Laurent De Rudder}
\email{l.derudder@uliege.be}
\affiliation{%
  \institution{University of Li\`ege}
  \country{Belgium}
}

\author{Alessandra Palmigiano}
\affiliation{%
  \institution{Vrije Universiteit Amsterdam}
  \country{Netherlands}}
\email{a.palmigiano@vu.nl}

\renewcommand{\shortauthors}{De Rudder and Palmigiano}

\begin{abstract}
We prove an algebraic canonicity theorem for normal LE-logics of arbitrary signature, in a generalized setting in which the non-lattice connectives are interpreted as operations mapping tuples of elements of the given lattice to closed or open elements of its canonical extension. Interestingly, the syntactic shape of LE-inequalities which guarantees their canonicity in this generalized setting turns out to coincide with the syntactic shape of {\em analytic inductive inequalities}, which guarantees LE-inequalities to be equivalently captured by analytic structural rules of a proper display calculus. We show that this canonicity result connects and strengthens a number of recent canonicity results in two different areas: subordination algebras, and  transfer results via G\"odel-McKinsey-Tarski translations.

\end{abstract}

\begin{CCSXML}
<ccs2012>
<concept>
<concept_id>10003752.10003790.10003793</concept_id>
<concept_desc>Theory of computation~Modal and temporal logics</concept_desc>
<concept_significance>500</concept_significance>
</concept>
</ccs2012>
\end{CCSXML}

\ccsdesc[500]{Theory of computation~Modal and temporal logics}

\keywords{Sahlqvist canonicity, algorithmic correspondence and canonicity, non-distributive lattices, analytic inductive inequalities, subordination algebras, transfer results via G\"odel-McKinsey-Tarski translations}

\maketitle

\section{Introduction}
The present paper addresses the connection between canonicity problems in two seemingly unrelated areas, namely subordination algebras and transfer results for nonclassical modal logics via G\"odel-McKinsey-Tarski translations or variations thereof (GMT-type translations). 
Subordination algebras were introduced in  \cite{BezSouVe} as a generalization of de Vries' compingent algebras \cite{deVries} 
and are equivalent presentations of pre-contact algebras \cite{Precontact}, proximity algebras  \cite{duntsch2007region}, and  quasi-modals algebras \cite{Quasimodal, Celani}. 
Canonicity for subordination algebras has been studied in \cite{DeHA} using topological techniques,  in the  context of a Sahlqvist-type result obtained in the setting of classical modal logic for a proper subclass of Sahlqvist formulas, referred to as {\em s-Sahlqvist} formulas. The syntactic shape of s-Sahlqvist formulas guarantees key algebraic/topological properties to their algebraic interpretation, which compensate for the fact that the semantic modal operations on subordination algebras are not defined on its original algebra, but might map elements of it to closed or open elements of its canonical extension.

As to the problem of obtaining Sahlqvist-type results for certain non-classical logics by reduction to classical Sahlqvist theory by means of GMT-type translations, in \cite{GhNaVe05}, the correspondence-via-translation problem has been completely solved for Sahlqvist inequalities in the signature of Distributive Modal Logic, but the corresponding canonicity-via-translation problem, reported to be much harder, was not addressed there, and the canonicity result was obtained following the methodology introduced by  J\'onsson \cite{Jonsson-sahlqvist}. In \cite{CoPaZh19}, results on both correspondence-via-translation and canonicity-via-translation for inductive inequalities in arbitrary signatures of normal distributive lattice expansions (aka normal DLE-logics) are presented, but the canonicity via translation is restricted  to arbitrary normal expansions of bi-Heyting algebras. The source of the additional difficulties was identified in the fact that the algebraic interpretations of the S4-modal operators used to define the GMT-type translations are not defined on each original algebra but might map elements of it to closed or open elements of its canonical extension.

The two independent problems described above have hence a common root in their involving operations on canonical extensions of distributive lattice expansions that do not in general restrict to clopen elements but map clopens to open or to closed elements. These maps, which we refer to as {\em slanted} maps (cf.~Definition \ref{def:c-slanted o-slanted}), have been  considered in \cite[Section 2.3]{GeJo04} in the context of a characterization of canonical extensions of maps as continuous extensions w.r.t.~certain topologies, but the canonicity theory of term inequalities involving these maps was not developed there; interestingly, examples of maps endowed with these weaker topological properties are the adjoints/residuals of the $\sigma$- or $\pi$-extensions of normal modal expansions, and their key role in achieving canonicity results, and specifically in extending J\'onsson's methodology for canonicity from Sahlqvist to inductive inequalities, was emphasised in \cite{PaSoZh15}.

In the present paper, we develop the {\em generalized Sahlqvist-type canonicity} theory for normal LE-logics of arbitrary signature, in a setting in which the algebraic interpretations of the connectives of the expanded signature map elements of the given algebra to closed or open elements of its canonical extension. Interestingly, the class of formulas/inequalities for which this result holds is the class of {\em analytic inductive} LE-inequalities, introduced in \cite{GMPTZ} in the context of the theory of analytic calculi in structural proof theory, to characterize the logics which can be presented by means of proper display calculi \cite{wansing2013displaying}.

Perhaps surprisingly, far from being hard, this generalized canonicity result is obtained as a very smooth refinement of extant generalized Sahlqvist-type canonicity results for LE-logics (cf.~\cite{CoPa-nondist,CoPa-constructive}), established within unified correspondence theory \cite{UnifCorresp}. One of the main contributions of unified correspondence theory is the introduction of an algebraic and algorithmic approach to the proof of canonicity (and correspondence) results that unifies and uniformly generalizes   the methodologies developed by J\'onsson \cite{Jonsson-sahlqvist}, Ghilardi-Meloni \cite{GhMe97}, Sambin-Vaccaro \cite{Sambin:Vacarro:89}, and Conradie-Goranko-Vakarelov \cite{CoGoVaSEQMA}. The fact that the algebraic and algorithmic approach extends so smoothly to the present setting (a step-by-step comparison with the algorithmic canonicity results in standard algebras of \cite{CoPa-nondist,CoPa-constructive} is discussed in Section \ref{appendix}) is further evidence of its robustness. 

The generalized canonicity result obtained in the present paper is then applied to the two canonicity problems mentioned above. Namely, a strengthening of the canonicity result for subordination algebras of \cite{DeHA} is obtained as a direct application, simply by recognizing that the s-Sahlqvist formulas exactly coincide with the analytic 1-Sahlqvist formulas in the classical normal modal/tense logic signature. Moreover, the canonicity-via-translation result of \cite{CoPaZh19} is extended to normal DLE-logics in arbitrary signatures for a subclass of analytic inductive inequalities referred to as {\em transferable} (cf.~Definition \ref{def: transferable}); the syntactic shape of the formulas in this subclass guarantees that the suitable parametric translation of each formula in this class is analytic inductive, so that the generalized canonicity result obtained in the present paper applies to them.
\paragraph{Structure of the paper} In Section \ref{Prelim:Section}, we collect preliminary notions, facts and notation on LE-logics, their standard\footnote{We warn the reader of a possible clash in terminology with the fuzzy logic literature, where  the expression ``standard algebras'' has a technical meaning. Throughout the present paper, ``standard algebras'' refers to  the well-known universal-algebraic definition of algebras, and the adjective ``standard'' is used to emphasise the distinction between algebras as they are usually defined and the slanted algebras introduced in the present article.} algebraic semantics, canonical extensions of normal LEs,  (analytic) Sahlqvist and inductive LE-inequalities, and the algorithm ALBA on analytic inductive LE-inequalities. In Section \ref{subsec:slanted lattice:expansions}, we introduce slanted LE-algebras and their canonical extensions, define how these structures can serve as a semantic environment for normal LE-logics, and introduce the notion of {\em slanted canonicity} (or {\em s-canonicity}, cf.~Definition \ref{def:slanted canonicity}). In Section \ref{section:canonicity}, we prove the main result of the present paper, namely that analytic inductive LE-inequalities are s-canonical. In Section \ref{sec:Sahlqvist canonicity via translation}, we apply the main result of the previous section to extend the transfer result of canonicity to the class of transferable analytic inductive DLE-inequalities. In Section \ref{sec:subordination}, we apply the main result to the setting of subordination algebras to strengthen the canonicity result of \cite{DeHA}. In Section \ref{sec:conclusions}, we discuss further directions stemming from the present results.  In Section \ref{appendix}, we collect the technical lemmas intervening in the proof of our main result.
\section{Preliminaries}\label{Prelim:Section}
In the present section we  recall the definition of normal LE-logics (for Lattice Expansion, see Definition \ref{def:DLE}) and various notions and facts about their algebraic semantics and algorithmic correspondence and canonicity theory. The material presented here re-elaborates \cite[Sections 1, 3, 4]{CoPa-nondist}, \cite[Section 3]{GMPTZ} and \cite[Section 5]{syntactic-completeness}.

\subsection{Basic normal LE-logics}\label{Subsec:PrelimDef}
	
Throughout the present paper, we will make heavy use of the following auxiliary definition: an {\em order-type}\footnote{Throughout the paper, order-types will be typically associated with arrays of variables $\vec p: = (p_1,\ldots, p_n)$. When the order of the variables in $\vec p$ is not specified, we will sometimes abuse notation and write $\varepsilon(p) = 1$ or $\varepsilon(p) = \partial$.} over $n\in \mathbb{N}$ is an $n$-tuple $\epsilon\in \{1, \partial\}^n$. For every order-type $\epsilon$, we denote its {\em opposite} order-type by $\epsilon^\partial$, that is, $\epsilon^\partial_i = 1$ iff $\epsilon_i=\partial$ for every $1 \leq i \leq n$. For any lattice $\bba$, we let $\bba^1: = \bba$ and $\bba^\partial$ be the dual lattice, that is, the lattice associated with the converse partial order of $\bba$. For any order-type $\varepsilon$ over $n$, we let $\bba^\varepsilon: = \Pi_{i = 1}^n \bba^{\varepsilon_i}$.
	The language $\mathcal{L}_\mathrm{LE}(\mathcal{F}, \mathcal{G})$, from now on abbreviated as $\mathcal{L}_\mathrm{LE}$, takes as parameters: 1) a denumerable set $\mathsf{PROP}$ of proposition letters, elements of which are denoted $p,q,r$, possibly with indexes; 2) disjoint\footnote{We assume that these families are disjoint for the sake of generality. As done e.g.~in \cite{gehrke-priestley05},  elements in $\mathcal{F} \cap \mathcal{G}$ can be duplicated, and the two copies treated separately, on the basis of different order-theoretic properties (cf.~Remark \ref{rmk:motivation f-g}).} sets of connectives $\mathcal{F}$ and $\mathcal{G}$. 
	 Each $f\in \mathcal{F}$ and $g\in \mathcal{G}$ has arity $n_f\in \mathbb{N}$ (resp.\ $n_g\in \mathbb{N}$) and is associated with some order-type $\varepsilon_f$ over $n_f$ (resp.\ $\varepsilon_g$ over $n_g$).\footnote{Unary $f$ (resp.\ $g$) connectives will be typically denoted  $\Diamond$ (resp.\ $\Box$) if their order-type is 1, and $\lhd$ (resp.\ $\rhd$) if their order-type is $\partial$.} The terms (formulas) of $\mathcal{L}_\mathrm{LE}$ are defined recursively as follows:
	\[
	\phi ::= p \mid \bot \mid \top \mid \phi \wedge \phi \mid \phi \vee \phi \mid f(\overline{\phi}) \mid g(\overline{\phi})
	\]
	where $p \in \mathsf{PROP}$, $f \in \mathcal{F}$, $g \in \mathcal{G}$. Note that, to simplify notations, for $\circ \in \mathcal{F} \cup \mathcal{G}$, we will sometimes write $\circ(\overline{\phi},\overline{\psi})$ where $\overline{\phi}$ is used in the coordinates whose order-type is 1 of $\circ$ and $\overline{\psi}$ in the ones whose order-type is $\partial$. Terms in $\mathcal{L}_\mathrm{LE}$ are denoted either by $s,t$, or by lowercase Greek letters such as $\varphi, \psi, \gamma$ etc. We let $\mathcal{L}^{\leq}_\mathrm{LE}$ denote the set of $\mathcal{L}_\mathrm{LE}$-{\em inequalities}, i.e.~expressions of the form $\phi \leq \psi$ where $\phi, \psi$ are $\mathcal{L}_\mathrm{LE}$-terms, and $\mathcal{L}_\mathrm{LE}^{\mathit{quasi}}$ denote the set of  $\mathcal{L}_\mathrm{LE}$-{\em quasi-inequalities}, i.e.~expressions of the form $(\phi_1 \leq \psi_1 \amp \cdots \amp \phi_n \leq \psi_n) \Rightarrow \phi \leq \psi$ where $\phi_1, \ldots, \phi_n, \psi_1, \ldots \psi_n, \phi, \psi \in \mathcal{L}_\mathrm{LE}$.

\begin{remark} 
\label{rmk:motivation f-g}
The purpose of grouping LE-connectives in the families $\mathcal{F}$ and $\mathcal{G}$ is to identify -- and refer to -- the two types of order-theoretic behaviour which will be relevant for the development of this theory and are specified in  Definition \ref{def:DLE}. The order-theoretic properties defining membership in these families are dual to each other, and are such that, roughly speaking, connectives in $\mathcal{F}$  (resp.~$\mathcal{G}$) can be thought of generalized {\em operators} (resp.~{\em dual operators}), of which diamond (resp.~box) in modal logic and fusion in substructural logic (resp.~intuitionistic implication) are prime examples. We refer to \cite{CoPa-nondist} for an extensive illustration of how this classification can be instantiated in several well known LE-signatures.
The order-duality of this classification is of course very convenient for presentation purposes, since it allows to recover one half of the relevant proofs from the other half, simply by invoking order-duality. However, there is more to it: as discussed more in detail before Definition \ref{def:canext LE standard},
 the order-theoretic properties underlying the definition of the family $\mathcal{F}$ (resp.~$\mathcal{G}$) are strongly linked with the order-theoretic properties of the $\sigma$-extensions (resp.~ $\pi$-extensions) of the algebraic interpretations of connectives in $\mathcal{F}$ (resp.~$\mathcal{G}$), and especially with their properties of {\em adjunction/residuation}, which in turn are key and have been exploited in other papers (cf.~e.g.~\cite{GMPTZ}) also from a {\em proof-theoretic} perspective for guaranteeing certain key results about proper display calculi (e.g.~conservativity, cut elimination) to hold uniformly. Specific to the results we are presently after, these order-theoretic properties also guarantee that  these $\sigma$- and $\pi$-extensions will have sufficient {\em topological} properties for our main canonicity result to go through.
\end{remark}

\begin{definition}
		\label{def:DLE:logic:general}
		For any language $\mathcal{L}_\mathrm{LE} = \mathcal{L}_\mathrm{LE}(\mathcal{F}, \mathcal{G})$, an $\mathcal{L}_\mathrm{LE}$-{\em logic} is a set of sequents $\phi\vdash\psi$, with $\phi,\psi\in\mathcal{L}_\mathrm{LE}$, which contains the following axioms:
		\begin{itemize}
			\item Sequents for lattice operations:
			\begin{align*}
				&p\vdash p, && \bot\vdash p, && p\vdash \top, & &  &\\
				&p\vdash p\vee q, && q\vdash p\vee q, && p\wedge q\vdash p, && p\wedge q\vdash q, &
			\end{align*}
			\item Sequents for each connective $f \in \mathcal{F}$ and $g \in \mathcal{G}$ with $n_f, n_g \geq 1$:
			\begin{align*}
				& f(p_1,\ldots, \bot,\ldots,p_{n_f}) \vdash \bot,~\mathrm{for}~ \varepsilon_f(i) = 1,\\
				& f(p_1,\ldots, \top,\ldots,p_{n_f}) \vdash \bot,~\mathrm{for}~ \varepsilon_f(i) = \partial,\\
				&\top\vdash g(p_1,\ldots, \top,\ldots,p_{n_g}),~\mathrm{for}~ \varepsilon_g(i) = 1,\\
				&\top\vdash g(p_1,\ldots, \bot,\ldots,p_{n_g}),~\mathrm{for}~ \varepsilon_g(i) = \partial,\\
				&f(p_1,\ldots, p\vee q,\ldots,p_{n_f}) \vdash f(p_1,\ldots, p,\ldots,p_{n_f})\vee f(p_1,\ldots, q,\ldots,p_{n_f}),~\mathrm{for}~ \varepsilon_f(i) = 1,\\
				&f(p_1,\ldots, p\wedge q,\ldots,p_{n_f}) \vdash f(p_1,\ldots, p,\ldots,p_{n_f})\vee f(p_1,\ldots, q,\ldots,p_{n_f}),~\mathrm{for}~ \varepsilon_f(i) = \partial,\\
				& g(p_1,\ldots, p,\ldots,p_{n_g})\wedge g(p_1,\ldots, q,\ldots,p_{n_g})\vdash g(p_1,\ldots, p\wedge q,\ldots,p_{n_g}),~\mathrm{for}~ \varepsilon_g(i) = 1,\\
				& g(p_1,\ldots, p,\ldots,p_{n_g})\wedge g(p_1,\ldots, q,\ldots,p_{n_g})\vdash g(p_1,\ldots, p\vee q,\ldots,p_{n_g}),~\mathrm{for}~ \varepsilon_g(i) = \partial,
			\end{align*}
		\end{itemize}
		and is closed under the following inference rules:
		\[ \frac{\varphi \vdash \chi \ \ \ \chi \vdash \psi}{\varphi \vdash \psi}  \> \> \> \> \> \frac{\varphi \vdash \psi}{\phi(\chi/p) \vdash \psi(\chi/p)}  \> \> \> \> \> \frac{\chi \vdash \phi \ \ \ \chi \vdash \psi }{\chi \vdash \phi \wedge \psi}  \> \> \> \> \> \frac{\phi \vdash \chi \ \ \ \psi \vdash \chi}{\phi \vee \psi \vdash \chi}\]
where $\phi(\chi/p)$ denotes uniform substitution of $\chi$ for $p$ in $\phi$,  and for each connective $f \in \mathcal{F}$ and $g \in \mathcal{G}$,
		
		\[
			 \frac{\phi\vdash\psi}{f(\phi_1,\ldots,\phi,\ldots,\phi_n)\vdash f(\phi_1,\ldots,\psi,\ldots,\phi_n)}{~(\varepsilon_f(i) = 1)}
\]
\[
			 \frac{\phi\vdash\psi}{f(\phi_1,\ldots,\psi,\ldots,\phi_n)\vdash f(\phi_1,\ldots,\phi,\ldots,\phi_n)}{~(\varepsilon_f(i) = \partial)}
\]
		\[
			 \frac{\phi\vdash\psi}{g(\phi_1,\ldots,\phi,\ldots,\phi_n)\vdash g(\phi_1,\ldots,\psi,\ldots,\phi_n)}{~(\varepsilon_g(i) = 1)}
		\]
	\[
			 \frac{\phi\vdash\psi}{g(\phi_1,\ldots,\psi,\ldots,\phi_n)\vdash g(\phi_1,\ldots,\phi,\ldots,\phi_n)}{~(\varepsilon_g(i) = \partial)}.
	\]
	
		The minimal $\mathcal{L}_{\mathrm{LE}}(\mathcal{F}, \mathcal{G})$-logic is denoted by $\mathbf{L}_\mathrm{LE}(\mathcal{F}, \mathcal{G})$, or simply by $\mathbf{L}_\mathrm{LE}$ when $\mathcal{F}$ and $\mathcal{G}$ are clear from the context. 
	\end{definition}
	
The standard algebraic semantics of LE-logics is given as follows:
	\begin{definition}
		\label{def:DLE}
		For any LE-signature $\mathcal{L}_{\mathrm{LE}} = \mathcal{L}_{\mathrm{LE}}(\mathcal{F}, \mathcal{G})$, an $\mathcal{L}_{\mathrm{LE}} $-{\em  algebra}  is a tuple $\bba = (A, \mathcal{F}^\bbA, \mathcal{G}^\bbA)$ such that $A$ is a bounded  lattice, $\mathcal{F}^\bbA = \{f^\bbA\mid f\in \mathcal{F}\}$ and $\mathcal{G}^\bbA = \{g^\bbA\mid g\in \mathcal{G}\}$, such that every $f^\bbA\in\mathcal{F}^\bbA$ (resp.\ $g^\bbA\in\mathcal{G}^\bbA$) is an $n_f$-ary (resp.\ $n_g$-ary) operation on $\bbA$. A lattice expansion\footnote{\label{footnote:DLE vs DLO} Normal LEs are sometimes referred to as {\em lattices with operators} (LOs). This terminology derives from the setting of Boolean algebras with operators, in which operators are understood as operations which preserve finite (hence also empty) joins in each coordinate. Thanks to the Boolean negation, operators are typically taken as primitive connectives, and all the other modal operations are reduced to these. However, this terminology is somewhat ambiguous in the lattice setting, in which primitive operations are typically maps which are operators if seen as $\bbA^\epsilon\to \bbA^\eta$ for some order-type $\epsilon$ on $n$ and some order-type $\eta\in \{1, \partial\}$. Rather than speaking of lattices with $(\varepsilon, \eta)$-operators, we then speak of normal LEs. This terminology is also used in other papers, e.g.~\cite{CoPaZh19}. For the sake of internal consistency, we stick with the name ``Boolean Algebra Expansion'' in Section \ref{sec:subordination}.} is {\em normal} if every $f^\bbA\in\mathcal{F}^\bbA$ (resp.\ $g^\bbA\in\mathcal{G}^\bbA$) preserves finite (hence also empty) joins (resp.\ meets) in each coordinate with $\epsilon_f(i)=1$ (resp.\ $\epsilon_g(i)=1$) and reverses finite (hence also empty) meets (resp.\ joins) in each coordinate with $\epsilon_f(i)=\partial$ (resp.\ $\epsilon_g(i)=\partial$). 
	\end{definition}
	In what follows, we will generically refer to algebras in the definition above as LEs when it is not important to emphasize the specific signature, and as $\mathcal{L}_{\mathrm{LE}} $-algebras when it is. 
Standard LEs as defined above are not the main focus of the present paper, which is rather the {\em non-standard} algebraic semantics of normal LE-logics which we discuss in Section \ref{subsec:slanted lattice:expansions}.

\subsection{Perfect LEs and standard canonical extensions}\label{Perfect_Le} 
\begin{definition}
Let $A$ be a (bounded) sublattice of a complete lattice $A'$.
\begin{enumerate}
\item  $A$ is {\em dense} in $A'$ if every element of $A'$ can be expressed both as a join of meets and as
a meet of joins of elements from $A$.
\item $A$ is {\em compact} in $A'$ if, for all $S, T \subseteq A$, if $\bigwedge S\leq \bigvee T$ then $\bigwedge S'\leq \bigvee T'$ for some finite $S'\subseteq S$ and $T'\subseteq T$.
\item The {\em canonical extension} of a lattice $A$ is a complete lattice $A^\delta$ containing $A$
as a dense and compact sublattice.
\end{enumerate}
\end{definition}
For any lattice $\bba$, its canonical extension, besides being unique up to an isomorphism fixing $\bba$, always exists (cf.\ \cite[Propositions 2.6 and 2.7]{GH01}\footnote{In \cite{GH01}, the proof of the existence of the canonical extension is constructive, and is based on the complete lattice of Galois-stable sets of the polarity $(A, X, I)$, where $A$ and $X$ respectively are the sets of filters and ideals of the given lattice, and $I$ the relation of having non-empty intersection, as in the lattice representation of Hartonas and Dunn \cite{hartonas1997stone}.}).
\begin{definition}
\label{def: perfect lattice}
A  complete lattice $A$ is {\em perfect} if $A$  is both completely join-generated by the set $\jira$ of the completely
join-irreducible elements of $A$, and completely meet-generated by the set $\mira$ of
the completely meet-irreducible elements of $A$.
\end{definition}

Denseness implies that $\jir$ is contained in the meet closure $\kbbas$ of $A$ in $A^\delta$ and that $\mir$ is contained in the join closure $\obbas$ of $A$ in $A^\delta$ \cite{DUGePa2005}.\label{Page:JIr:Clsd:MIr:Opn} The elements of $\kbbas$ are referred to as {\em closed} elements, and elements of $\obbas$ as {\em open} elements.
The canonical extension of an LE $\bba$ will be defined as a suitable expansion of the canonical extension of the underlying lattice of $\bba$.
Before turning to this definition, recall that
taking the canonical extension of a lattice  commutes with
taking order-duals and products, namely:
${(A^\partial)}^\delta = {(A^\delta)}^\partial$ and ${(A_1\times A_2)}^\delta = A_1^\delta\times A_2^\delta$ (cf.\ \cite[Theorem 2.8]{DUGePa2005}).
Hence, ${(A^n)}^\delta$ can be  identified with ${(A^\delta)}^n$ and
${(A^\varepsilon)}^\delta$ with ${(A^\delta)}^\varepsilon$ for any order-type $\varepsilon$. 

Thanks to these identifications,
in order to extend operations of any arity which are monotone or antitone in each coordinate from a lattice $\bba$ to its canonical extension, treating the case
of {\em monotone} and {\em unary} operations suffices:
\begin{definition}
For every unary, order-preserving operation $f : \bba \to \bba$, the $\sigma$-{\em extension} of $f$ is defined firstly by declaring, for every $k\in \kbbas$,
$$f^\sigma(k):= \bigwedge\{ f(a)\mid a\in \bba\mbox{ and } k\leq a\},$$ and then, for every $u\in A^\delta$,
$$f^\sigma(u):= \bigvee\{ f^\sigma(k)\mid k\in \kbbas\mbox{ and } k\leq u\}.$$
The $\pi$-{\em extension} of $f$ is defined firstly by declaring, for every $o\in \obbas$,
$$f^\pi(o):= \bigvee\{ f(a)\mid a\in \bba\mbox{ and } a\leq o\},$$ and then, for every $u\in A^\delta$,
$$f^\pi(u):= \bigwedge\{ f^\pi(o)\mid o\in \obbas\mbox{ and } u\leq o\}.$$
\end{definition}
Key to the use of canonical extensions in logic (e.g.~for proving semantic completeness via canonicity, as well as for the proof-theoretic results mentioned in Remark \ref{rmk:motivation f-g}) 
are certain basic desiderata that canonical extensions must satisfy. These desiderata start from the condition that the canonical extension of a given $\mathcal{L}_{\mathrm{LE}}$-algebra must also be an $\mathcal{L}_{\mathrm{LE}}$-algebra, and that, moreover, the canonical extension of an $\mathcal{L}_{\mathrm{LE}}$-algebra must be a perfect (resp.~complete, in the constructive setting) $\mathcal{L}_{\mathrm{LE}}$-algebra (cf.~Definition \ref{def:perfect LE} below). The first desideratum is met immediately whenever all connectives in the given signature $\mathcal{L}_{\mathrm{LE}}$ are \emph{smooth}, i.e.~their $\sigma$- and $\pi$-extensions coincide. While \emph{unary} normal connectives are smooth, (see e.g.\ \cite[Lemma 4.4]{GH01}), connectives with arity greater than 1 are typically non-smooth (see e.g.\ \cite[Example 4.6]{Vosmaer}). Hence, whenever $\mathcal{L}_{\mathrm{LE}}$ includes non-smooth connectives, to satisfy the first desideratum one needs to decide whether to define the canonical extensions of $\mathcal{L}_{\mathrm{LE}}$-algebras  by taking, for each connective in $\mathcal{L}_{\mathrm{LE}}$,  either its $\sigma$- or its $\pi$-extension. Our choice is guided by the second desideratum: 
it is easy to see that the $\sigma$- and $\pi$-extensions of $\varepsilon$-monotone maps are $\varepsilon$-monotone; moreover,  the $\sigma$-extension of a map which coordinate-wise preserves {\em finite} joins or reverses {\em finite} meets will coordinate-wise preserve {\em arbitrary} joins
or reverse {\em arbitrary} meets, and dually, the $\pi$-extension of a map which coordinate-wise preserves {\em finite} meets or reverses {\em finite} joins  will coordinate-wise preserve {\em arbitrary} meets
or reverse {\em arbitrary} joins  (see \cite[Lemma 4.6]{GH01}). Therefore, defining the canonical extension of an $\mathcal{L}_{\mathrm{LE}}$-algebra by choosing the $\sigma$-extensions of operations in $\mathcal{F}^\bbA$ and the $\pi$-extensions of operations in $\mathcal{G}^\bbA$ will guarantee both the first and  the second desideratum.\footnote{For some discussion on this and the importance of choosing the appropriate extension we refer the reader to \cite[Section 7]{GhNaVe05}.} These considerations motivate the following
\begin{definition}
\label{def:canext LE standard}
The canonical extension of an
$\mathcal{L}_\mathrm{LE}$-algebra $\bbA = (A, \mathcal{F}^\bbA, \mathcal{G}^\bbA)$ is the   $\mathcal{L}_\mathrm{LE}$-algebra
$\bbA^\delta: = (A^\delta, \mathcal{F}^{\bbA^\delta}, \mathcal{G}^{\bbA^\delta})$ such that $f^{\bbA^\delta}$ and $g^{\bbA^\delta}$ are defined as the
$\sigma$-extension of $f^{\bbA}$ and as the $\pi$-extension of $g^{\bbA}$ respectively, for all $f\in \mathcal{F}$ and $g\in \mathcal{G}$.
\end{definition}
As mentioned in the previous discussion, defined as indicated above, the canonical extension of an $\mathcal{L}_\mathrm{LE}$-algebra $\bba$ can be shown to be a {\em perfect} $\mathcal{L}_\mathrm{LE}$-algebra:
\begin{definition}
\label{def:perfect LE}
An LE $\bbA = (A, \mathcal{F}^\bbA, \mathcal{G}^\bbA)$ is perfect if $A$ is a perfect lattice (cf.\ Definition \ref{def: perfect lattice}), and moreover the following infinitary distribution laws are satisfied for each $f\in \mathcal{F}$, $g\in \mathcal{G}$, $1\leq i\leq n_f$ and $1\leq j\leq n_g$: for every $S\subseteq A$,
\begin{center}
\begin{tabular}{c c }
$f(x_1,\ldots, \bigvee S, \ldots, x_{n_f}) =\bigvee \{ f(x_1,\ldots, x, \ldots, x_{n_f}) \mid x\in S \}$  & if $\varepsilon_f(i) = 1$\\

$f(x_1,\ldots, \bigwedge S, \ldots, x_{n_f}) =\bigvee \{ f(x_1,\ldots, x, \ldots, x_{n_f}) \mid x\in S \}$  & if $\varepsilon_f(i) = \partial$\\

$g(x_1,\ldots, \bigwedge S, \ldots, x_{n_g}) =\bigwedge \{ g(x_1,\ldots, x, \ldots, x_{n_g}) \mid x\in S \}$  & if $\varepsilon_g(i) = 1$\\

$g(x_1,\ldots, \bigvee S, \ldots, x_{n_g}) =\bigwedge \{ g(x_1,\ldots, x, \ldots, x_{n_g}) \mid x\in S \}$  & if $\varepsilon_g(i) = \partial$.\\

\end{tabular}
\end{center}

\end{definition}
Before finishing the present subsection, let us spell out and further simplify the definitions of the extended operations.
First of all, we recall that taking the order-dual interchanges closed and open elements:
$K(({A^\delta)}^\partial) \cong O(A^\delta)$ and $O({(A^\delta)}^\partial) \cong\kbbas$;  similarly, $K({(A^n)}^\delta) \cong\kbbas^n$, and $O({(A^n)}^\delta) \cong\obbas^n$. Hence,  $K({(A^\delta)}^\epsilon) \cong\prod_i K(A^\delta)^{\epsilon(i)}$ and $O({(A^\delta)}^\epsilon) \cong\prod_i O(A^\delta)^{\epsilon(i)}$ for every LE $\bba$ and every order-type $\epsilon$ on any $n\in \mathbb{N}$, where
\begin{center}
\begin{tabular}{cc}
$K(A^\delta)^{\epsilon(i)}: =\begin{cases}
K(A^\delta) & \mbox{if } \epsilon(i) = 1\\
O(A^\delta) & \mbox{if } \epsilon(i) = \partial\\
\end{cases}
$ &
$O(A^\delta)^{\epsilon(i)}: =\begin{cases}
O(A^\delta) & \mbox{if } \epsilon(i) = 1\\
K(A^\delta) & \mbox{if } \epsilon(i) = \partial.\\
\end{cases}
$\\
\end{tabular}
\end{center}
Denoting by $\leq^\epsilon$ the product order on $(A^\delta)^\epsilon$, we have for every $f\in \mathcal{F}$, $g\in \mathcal{G}$,  $\overline{k} \in K({(A^\delta)}^{\epsilon_f})$, $\overline{o} \in O({(A^\delta)}^{\epsilon_f})$ $\overline{u}\in (A^\delta)^{n_f}$ and $\overline{v}\in (A^\delta)^{n_g}$,
\begin{center}
\begin{tabular}{l l}
$f^\sigma (\overline{k}):= \bigwedge\{ f( \overline{a})\mid \overline{a}\in A^{\epsilon_f}\mbox{ and } \overline{k}\leq^{\epsilon_f} \overline{a}\}$ & $f^\sigma (\overline{u}):= \bigvee\{ f^\sigma( \overline{k})\mid \overline{k}\in K({(A^\delta)}^{\epsilon_f})\mbox{ and } \overline{k}\leq^{\epsilon_f} \overline{u}\}$ \\
$g^\pi (\overline{o}):= \bigvee\{ g( \overline{a})\mid \overline{a}\in A^{\epsilon_g}\mbox{ and } \overline{a}\leq^{\epsilon_g} \overline{o}\}$ & $g^\pi (\overline{v}):= \bigwedge\{ g^\pi( \overline{o})\mid \overline{o}\in O({(A^\delta)}^{\epsilon_g})\mbox{ and } \overline{v}\leq^{\epsilon_g} \overline{o}\}$. \\
\end{tabular}
%
\end{center}

The algebraic completeness of  $\mathbf{L}_\mathrm{LE}$ 
and the canonical embedding of LEs into their canonical extensions immediately yield completeness of $\mathbf{L}_\mathrm{LE}$ 
w.r.t.\ the appropriate class of perfect LEs.

\subsection{Inductive and Sahlqvist (analytic) LE-inequalities}\label{Inductive:Fmls:Section}
In this section we  recall the definitions of inductive and Sahlqvist LE-inequalities introduced in \cite{CoPa-nondist} and their corresponding `analytic' restrictions introduced in \cite{GMPTZ} in the distributive setting and then generalized to the setting of  LEs of arbitrary signatures in \cite{greco2018algebraic}.  Each inequality in any of these classes 
is canonical  and elementary (cf.~\cite[Theorems 8.8 and 8.9]{CoPa-nondist}).  

		

		\begin{definition}[\textbf{Signed Generation Tree}]
			\label{def: signed gen tree}
			The \emph{positive} (resp.\ \emph{negative}) {\em generation tree} of any $\mathcal{L}_\mathrm{LE}$-term $s$ is defined by labelling the root node of the generation tree of $s$ with the sign $+$ (resp.\ $-$), and then propagating the labelling on each remaining node as follows:
			\begin{itemize}
				\item For any node labelled with $ \lor$ or $\land$, assign the same sign to its children nodes.
				\item For any node labelled with $h\in \mathcal{F}\cup \mathcal{G}$ of arity $n_h\geq 1$, and for any $1\leq i\leq n_h$, assign the same (resp.\ the opposite) sign to its $i$th child node if $\varepsilon_h(i) = 1$ (resp.\ if $\varepsilon_h(i) = \partial$).
			\end{itemize}
			Nodes in signed generation trees are \emph{positive} (resp.\ \emph{negative}) if are signed $+$ (resp.\ $-$).
		\end{definition}
		
		Signed generation trees will be mostly used in the context of term inequalities $s\leq t$. In this context we will typically consider the positive generation tree $+s$ for the left-hand side and the negative one $-t$ for the right-hand side. We will also say that a term-inequality $s\leq t$ is \emph{uniform} in a given variable $p$ if all occurrences of $p$ in both $+s$ and $-t$ have the same sign, and that $s\leq t$ is $\epsilon$-\emph{uniform} in a (sub)array $\vec{p}$ of its variables if $s\leq t$ is uniform in every $p$ in $\vec{p}$, occurring with the sign indicated by $\epsilon$. With a routine proof by induction, one can show that if a term-inequality $s\leq t$ is $1$-uniform (resp.~$\partial$-uniform) in a given variable $p$, then the term-function associated with $s$ (see Definitions \ref{def:DLE} and \ref{def:c-slanted o-slanted})  is order-preserving (resp.~order-reversing) in $p$, and the term-function associated with $t$ is order-reversing (resp.~order-preserving) in $p$. Therefore, the validity of  $s(p) \leq t(p)$ is equivalent to the validity of $s(\top) \leq t(\top)$ (resp.~$s(\bot) \leq t(\bot)$)\footnote{As noticed in \cite[Remark 6.3]{CoPa-Dist}, these equivalences are in fact instances of the Ackermann Lemmas 
		where the $p$-variant  assignment $v'$  is such that $v'(p)=\top$  (if $s\leq t$ is $1$-uniform) or $v'(p) = \bot$ (if $s\leq t$ is $\partial$-uniform).}. This observation is easily generalised to an arbitrary subarray of variables of $s \leq t$.
		
		For any term $s(p_1,\ldots p_n)$, any order-type $\epsilon$ over $n$, and any $1 \leq i \leq n$, an \emph{$\epsilon$-critical node} in a signed generation tree of $s$ is a leaf node $+p_i$ if $\epsilon (i) = 1$ and $-p_i$ if $\epsilon (i) = \partial$. An $\epsilon$-{\em critical branch} in the tree is a branch from an $\epsilon$-critical node. Variable occurrences corresponding to $\epsilon$-critical nodes are to be solved for (cf.~Section \ref{Spec:Alg:Section}).

		For every term $s(p_1,\ldots p_n)$ and every order-type $\epsilon$, we say that $+s$ (resp.\ $-s$) {\em agrees with} $\epsilon$, and write $\epsilon(+s)$ (resp.\ $\epsilon(-s)$), if every leaf in the signed generation tree of $+s$ (resp.\ $-s$) is $\epsilon$-critical.
		We will also write $+s'\prec \ast s$ (resp.\ $-s'\prec \ast s$) to indicate that the subterm $s'$ inherits the positive (resp.\ negative) sign from the signed generation tree $\ast s$. Finally, we will write $\epsilon(\gamma) \prec \ast s$ (resp.\ $\epsilon^\partial(\gamma) \prec \ast s$) to indicate that the signed subtree $\gamma$, with the sign inherited from $\ast s$, agrees with $\epsilon$ (resp.\ with $\epsilon^\partial$).

		We will write $\phi(!x)$ (resp.~$\phi(!\overline{x})$) to indicate that the variable $x$ (resp.~each variable $x$ in $\overline{x}$) occurs exactly once in $\phi$. Accordingly, we will write $\phi[\gamma / !x]$  (resp.~$\phi[\overline{\gamma}/!\overline{x}]$) to indicate the formula obtained  by substituting $\gamma$ (resp.~each term $\gamma$ in $\overline{\gamma}$) for the unique occurrence of (its corresponding variable) $x$ in $\phi$.

		\begin{definition}
			\label{def:good:branch}
			Nodes in signed generation trees will be called \emph{$\Delta$-adjoints}, \emph{syntactically left residual (SLR)}, \emph{syntactically right residual (SRR)}, and \emph{syntactically right adjoint (SRA)}, according to the specification given in Table \ref{Join:and:Meet:Friendly:Table}.
			A branch in a signed generation tree $\ast s$, with $\ast \in \{+, - \}$, is called a \emph{good branch} if it is the concatenation of two paths $P_1$ and $P_2$, one of which may possibly be of length $0$, such that $P_1$ is a path from the leaf consisting (apart from variable nodes) only of PIA-nodes, and $P_2$ consists (apart from variable nodes) only of Skeleton-nodes. 
A branch is \emph{excellent} if it is good and in $P_1$ there are only SRA-nodes. A good branch is \emph{Skeleton} if the length of $P_1$ is $0$ (hence Skeleton branches are excellent), and  is {\em SLR}, or {\em definite}, if  $P_2$ only contains SLR nodes.
			\begin{table}[h]
				\begin{center}
                \bgroup
                \def\arraystretch{1.2}
					\begin{tabular}{| c | c |}
						\hline
						Skeleton &PIA\\
						\hline
						$\Delta$-adjoints & Syntactically Right Adjoint (SRA) \\
						\begin{tabular}{ c c c c c c}
							$+$ &$\vee$ &\\
							$-$ &$\wedge$ \\
							\hline
						\end{tabular}
						&
						\begin{tabular}{c c c c }
							$+$ &$\wedge$ &$g$ & with $n_g = 1$ \\
							$-$ &$\vee$ &$f$ & with $n_f = 1$ \\
							\hline
						\end{tabular}
						\\
						Syntactically Left Residual (SLR) &Syntactically Right Residual (SRR)\\
						\begin{tabular}{c c c c }
							$+$ &  &$f$ & with $n_f \geq 1$\\
							$-$ &  &$g$ & with $n_g \geq 1$ \\
						\end{tabular}
						&\begin{tabular}{c c c c}
							$+$ & &$g$ & with $n_g \geq 2$\\
							$-$ &  &$f$ & with $n_f \geq 2$\\
						\end{tabular}
						\\
						\hline
					\end{tabular}
                \egroup
				\end{center}
				\caption{Skeleton and PIA nodes for $\mathrm{LE}$.}\label{Join:and:Meet:Friendly:Table}
				\vspace{-1em}
			\end{table}
		\end{definition}
We refer to \cite[Remark 3.3]{CoPa-nondist} 
and \cite[Section 3]{GhNaVe05} for a discussion about the notational conventions and terminology.	

\begin{example}\label{ex: example Sahlqvist bi} The language $\mathcal{L}_\mathrm{LE}$ of bi-intuitionistic modal logic is obtained by instantiating $\mathcal{F}=\lbrace \Diamond, \righttail \rbrace $ and $\mathcal{G}= \lbrace \square, \rightarrow \rbrace$ with $n_\Diamond = n_\square = 1$, $n_{ \footnotesize{\righttail} } = n_\rightarrow = 2$ and $\epsilon_\Diamond = \epsilon_\square =1$, $\epsilon_{\footnotesize{\righttail}} = \epsilon_\rightarrow = (\partial, 1)$. 
In this language, the signed generation trees associated with the  inequality
\[ \Diamond \square p \vee  q \leq \Diamond \square q \wedge (\Diamond  r \righttail p) \] are represented in the following diagram, where   PIA nodes occur inside dashed rectangles and Skeleton nodes inside continuous ones.

\begin{center}
\begin{tikzpicture}
\node(plus) at (0,0.75) {$+$};
\node(minus) at (5,0.75) {$-$};
\node[Ske](vee1) at (0,0) {$+ \vee$};
\node[Ske](Diamond1) at (-1,-1) {$+ \Diamond$};
\node[PIA](Box1) at (-1,-2){$+\square$};
\node(p1) at (-1,-3){$+p$};
\node(q1) at (1,-1) {$+q$};
\node[Ske](wedge1) at (5,0) {$-\wedge$};
\node(leq) at (2.5, -1.5) {$\leq$};
\node[PIA](Diamond2) at (4,-1) {$-\Diamond$};
\node[Ske](Box2) at (4,-2) {$-\square$};
\node(q2) at (4,-3) {$-q$};
\node[Ske](Diamond3) at (5.25,-2) {$+ \Diamond$};
\node[PIA](righttail) at (6,-1) {$- \righttail$};
\node(r) at (5.25,-3) {$+r$};
\node(p2) at (6.75,-2) {$-p$};
\draw (vee1) to (Diamond1);
\draw (vee1) to (q1);
\draw(Diamond1) to (Box1);
\draw (Box1) to (p1);
\draw (wedge1) to (Diamond2);
\draw (wedge1) to (righttail);
\draw (Diamond2) to (Box2);
\draw (Box2) to (q2);
\draw (Diamond3) to (r);
\draw (righttail) to (Diamond3);
\draw (righttail) to (p2);
\end{tikzpicture}

\end{center}
The inequality above is not uniform in $p$ and $q$ (each of the variables has a positive occurrence and a negative one), but is uniform in $r$ (in this particular case  because $r$ occurs only once).
If we consider the order-type $\epsilon$ on $(p,q,r)$ given by $\epsilon=(1, 1, \partial)$, the critical nodes in the generations trees are $+p$ and $+q$. There is no $\epsilon$-critical occurrence of $r$. 
The term $+s:=+(\Diamond \square p \vee q)$ agrees with $\epsilon$, while the term $-t=-(\Diamond \square q \wedge  (\Diamond r \righttail p))$ agrees with  $\epsilon^\partial$. Now, for the subterms $t':=\Diamond r$ and $t'':= \Diamond r \righttail p$ of $t$, we have $+t' \prec - t$ and $-t'' \prec -t$. Moreover,  $\epsilon^\partial(t') \prec -t$ and $\epsilon^\partial(t'') \prec -t$.

The branches which end in $+p$, $+q$,  and $-p$ are good since, traversing the corresponding branches starting from the root, we first encounter Skeleton  nodes and then only PIA nodes. The branches ending in $+p$ and $+q$ are also excellent because they do not contain SRR nodes (the only SRR node occurring in this example is $- \righttail$). The branch which ends in $+q$ is in particular Skeleton since it contains no occurrences of PIA nodes (the length of $P_1$ is $0$). Finally, the branches which end in $-q$ and $+r$ are not good, since  (again starting from the root) a Skeleton node ($-\square$, $+\Diamond$) occurs in the scope of a PIA node ($-\Diamond$, $-\righttail$).
\end{example}

\begin{definition}[Inductive inequalities]\label{Inducive:Ineq:Def}
For any order-type $\epsilon$ and any irreflexive and transitive relation (i.e.\ strict partial order) $\Omega$ on $p_1,\ldots p_n$, the signed generation tree $*s$ $(* \in \{-, + \})$ of a term $s(p_1,\ldots p_n)$ is \emph{$(\Omega, \epsilon)$-inductive} if
			\begin{enumerate}
				\item for all $1 \leq i \leq n$, every $\epsilon$-critical branch with leaf $p_i$ is good (cf.\ Definition \ref{def:good:branch});
				\item every $m$-ary SRR-node occurring in the critical branch is of the form \[ \circledast(\gamma_1,\dots,\gamma_{j-1},\beta,\gamma_{j+1}\ldots,\gamma_m),\] where for any $h\in\{1,\ldots,m\}\setminus j$: 
\begin{enumerate}
\item $\epsilon^\partial(\gamma_h) \prec \ast s$ (cf.\ discussion before Definition \ref{def:good:branch}), and
%
\item $p_k <_{\Omega} p_i$ for every $p_k$ occurring in $\gamma_h$ and for every $1\leq k\leq n$.
\end{enumerate}
	\end{enumerate}
			
			We will refer to $<_{\Omega}$ as the \emph{dependency order} on the variables. An inequality $s \leq t$ is \emph{$(\Omega, \epsilon)$-inductive} if the signed generation trees $+s$ and $-t$ are $(\Omega, \epsilon)$-inductive. An inequality $s \leq t$ is \emph{inductive} if it is $(\Omega, \epsilon)$-inductive for some $\Omega$ and $\epsilon$.
		\end{definition}
		
		In what follows, we refer to formulas $\phi$ such that only PIA nodes occur in $+\phi$ (resp.\ $-\phi$) as {\em positive} (resp.\ {\em negative}) {\em PIA-formulas}, and to formulas $\xi$ such that only Skeleton nodes occur in $+\xi$ (resp.\ $-\xi$) as {\em positive} (resp.\ {\em negative}) {\em Skeleton-formulas}\label{page: positive negative PIA}. PIA formulas $\ast \phi$ in which no nodes $+\wedge$ and $-\vee$ occur are referred to as {\em definite}. Skeleton formulas $\ast \xi$ in which no nodes $-\wedge$ and $+\vee$ occur are referred to as {\em definite}.

\begin{definition}\label{Sahlqvist:Ineq:Def}
For an order-type $\epsilon$, the signed generation tree $\ast s$, $\ast \in \{-, + \}$, of a term $s(p_1,\ldots p_n)$ is \emph{$\epsilon$-Sahlqvist} if every $\epsilon$-critical branch is excellent. An inequality $s \leq t$ is \emph{$\epsilon$-Sahlqvist} if the trees $+s$ and $-t$ are both $\epsilon$-Sahlqvist.  An inequality $s \leq t$ is \emph{Sahlqvist} if it is $\epsilon$-Sahlqvist for some $\epsilon$.

\end{definition}

\begin{definition}[Analytic inductive and  analytic Sahlqvist inequalities]
	\label{def:type5}
	For every order-type $\epsilon$ and every irreflexive and transitive relation $\Omega$ on the variables $p_1,\ldots p_n$,
			the signed generation tree $\ast s$ ($\ast\in \{+, -\}$) of a term $s(p_1,\ldots p_n)$ is \emph{analytic $(\Omega, \epsilon)$-inductive} (resp.~\emph{analytic $\epsilon$-Sahlqvist}) if
			
			\begin{enumerate}
				\item $\ast s$ is $(\Omega, \epsilon)$-inductive (resp.~$\epsilon$-Sahlqvist); 
				\item every branch of $\ast s$ is good (cf.\ Definition \ref{def:good:branch}).
			\end{enumerate}	
	
	An inequality $s \leq t$ is \emph{analytic $(\Omega, \epsilon)$-inductive} (resp.~\emph{analytic $\epsilon$-Sahlqvist})  if $+s$ and $-t$ are both analytic  $(\Omega, \epsilon)$-inductive (resp.~analytic $\epsilon$-Sahlqvist). An inequality $s \leq t$ is \emph{analytic inductive} (resp.~\emph{analytic Sahlqvist}) if is analytic $(\Omega, \epsilon)$-inductive (resp.~analytic $\epsilon$-Sahlqvist)  for some $\Omega$ and $\epsilon$ (resp.~for some $\epsilon$).
\end{definition}	

\begin{example}
In the light of the previous definitions and the discussion in  Example \ref{ex: example Sahlqvist bi}, the modal bi-intuitionistic inequality $\Diamond \square p \vee  q \leq \Diamond \square q \wedge (\Diamond  r \righttail p) $ is $\epsilon$-Sahlqvist (and hence inductive) for $\epsilon(p,q,r)=(1, 1, \partial)$. However, it is not analytic since the negative generation tree of its right-hand side contains branches which are not good. 
\end{example}

\begin{notation}\label{notation: analytic inductive}
Following \cite{syntactic-completeness},  we will sometimes represent $(\Omega, \epsilon)$-analytic inductive inequalities as follows: \[(\varphi\leq \psi)[\overline{\alpha}/!\overline{x}, \overline{\beta}/!\overline{y},\overline{\gamma}/!\overline{z}, \overline{\delta}/!\overline{w}],\] where $(\varphi\leq \psi)[!\overline{x}, !\overline{y},!\overline{z}, !\overline{w}]$ is  the Skeleton of the given inequality,  $\overline{\alpha}$ (resp.~$\overline{\beta}$) denotes the positive (resp.~negative) maximal PIA-subformulas, i.e.~each $\alpha$ in $\overline{\alpha}$ and $\beta$ in $\overline{\beta}$ contains at least one $\varepsilon$-critical occurrence of some propositional variable, and moreover:
\begin{enumerate}
\item for each $\alpha\in \overline{\alpha}$, either 
$+\alpha\prec +\varphi$ or $+\alpha\prec -\psi$;
\item for each $\beta\in \overline{\beta}$, either 
  $-\beta\prec +\varphi$ or $-\beta\prec -\psi$,
  \end{enumerate}
and $\overline{\gamma}$ (resp.~$\overline{\delta}$) denotes the positive (resp.~negative) maximal $\varepsilon^{\partial}$-subformulas,  i.e.:
\begin{enumerate}

  \item for each $\gamma\in \overline{\gamma}$, either   $+\gamma\prec +\varphi$ or $+\gamma\prec -\psi$;
  \item for each $\delta\in \overline{\delta}$, either  $-\delta\prec +\varphi$ or $-\delta\prec -\psi$.
    \end{enumerate}
    For the sake of a more compact notation, in what follows we sometimes write $(\varphi\leq \psi)[\overline{\alpha}, \overline{\beta},\overline{\gamma}, \overline{\delta}]$ in place of \[ (\varphi\leq \psi)[\overline{\alpha}/!\overline{x}, \overline{\beta}/!\overline{y},\overline{\gamma}/!\overline{z}, \overline{\delta}/!\overline{w}]. \]
\end{notation}

\begin{remark}[The distributive setting]
When interpreting LE-languages on perfect distributive lattice expansions (DLEs), the logical disjunction is interpreted by means of the coordinatewise completely $\wedge$-preserving join operation of the lattice, and the logical conjunction with the coordinatewise completely $\vee$-preserving meet operation of the lattice. Hence we are justified in listing $+\wedge$ and $-\vee$ among the SLRs, and $+\vee$ and $-\wedge$ among the SRRs, as is done in Table \ref{Join:and:Meet:Friendly:Table:DLE}.

Consequently, we obtain enlarged classes of Sahlqvist and inductive inequalities
by simply applying Definitions \ref{def:good:branch}, \ref{Sahlqvist:Ineq:Def} and \ref{Inducive:Ineq:Def} with respect to Table \ref{Join:and:Meet:Friendly:Table:DLE}.

\begin{table}[h]
				\begin{center}
					\begin{tabular}{| c | c |}
						\hline
						Skeleton &PIA\\
						\hline
						$\Delta$-adjoints & SRA \\
						\begin{tabular}{ c c c c c c}
							$\phantom{\wedge}$ &$+$ &$\vee$ &$\phantom{\lhd}$ & &\\
							$\phantom{\vee}$ &$-$ &$\wedge$ \\
							\hline
						\end{tabular}
						&
						\begin{tabular}{c c c c }
							$+$ &$\wedge$ &$g$ & with $n_g = 1$ \\
							$-$ &$\vee$ &$f$ & with $n_f = 1$ \\
							\hline
						\end{tabular}
						\\
						SLR &SRR\\
						\begin{tabular}{c c c c }
							$+$ & $\wedge$  &$f$ & with $n_f \geq 1$\\
							$-$ & $\vee$ &$g$ & with $n_g \geq 1$ \\
						\end{tabular}
						&\begin{tabular}{c c c c}
							$+$ & $\vee$ &$g$ & with $n_g \geq 2$\\
							$-$ & $\wedge$  &$f$ & with $n_f \geq 2$\\
						\end{tabular}
						\\
						\hline
					\end{tabular}
				\end{center}
				\caption{Skeleton and PIA nodes for $\mathcal{L}_\mathrm{DLE}$.}\label{Join:and:Meet:Friendly:Table:DLE}
				\vspace{-1em}
			\end{table}

\end{remark}

	\subsection{Basic LE-language expanded with residuals}
	
	We now introduce an expansion of the language $\mathcal{L}_\mathrm{LE}(\mathcal{F}, \mathcal{G})$ with connectives  which are to be interpreted as the residuals in each coordinate (cf.~Definition \ref{def_residuated_lattice}) of the connectives in $\mathcal{F}$ and $\mathcal{G}$. This is the first of two expansion steps (the second of which being described in Section \ref{Subsec:Expanded:Land})	which lead to the language $\mathcal{L}^+_\mathrm{LE}(\mathcal{F}, \mathcal{G})$ (see Section \ref{Subsec:Expanded:Land}) in which the ALBA-reductions take place.  
	
	\label{ssec:expanded tense language}
	Formally, any given language $\mathcal{L}_\mathrm{LE} = \mathcal{L}_\mathrm{LE}(\mathcal{F}, \mathcal{G})$ can be associated with the language $\mathcal{L}_\mathrm{LE}^* = \mathcal{L}_\mathrm{LE}(\mathcal{F}^*, \mathcal{G}^*)$, where $\mathcal{F}^*\supseteq \mathcal{F}$ and $\mathcal{G}^*\supseteq \mathcal{G}$ are obtained by expanding $\mathcal{L}_\mathrm{LE}$ with the following connectives:
	\begin{enumerate}
		\item the $n_f$-ary connective $f^\sharp_i$ for $1\leq i\leq n_f$, the intended interpretation of which is the right residual of $f\in\mathcal{F}$ in its $i$th coordinate if $\varepsilon_f(i) = 1$ (resp.\ its Galois-adjoint if $\varepsilon_f(i) = \partial$);
		\item the $n_g$-ary connective $g^\flat_i$ for $1\leq i\leq n_g$, the intended interpretation of which is the left residual of $g\in\mathcal{G}$ in its $i$th coordinate if $\varepsilon_g(i) = 1$ (resp.\ its Galois-adjoint if $\varepsilon_g(i) = \partial$).
	
	\end{enumerate}
	We stipulate that
	$f^\sharp_i\in\mathcal{G}^*$ if $\varepsilon_f(i) = 1$, and $f^\sharp_i\in\mathcal{F}^*$ if $\varepsilon_f(i) = \partial$. Dually, $g^\flat_i\in\mathcal{F}^*$ if $\varepsilon_g(i) = 1$, and $g^\flat_i\in\mathcal{G}^*$ if $\varepsilon_g(i) = \partial$. The order-type assigned to the additional connectives is predicated on the order-type of their intended interpretations. That is, for any $f\in \mathcal{F}$ and $g\in\mathcal{G}$,

\begin{enumerate}
\item if $\epsilon_f(i) = 1$, then $\epsilon_{f_i^\sharp}(i) = 1$ and
$\epsilon_{f_i^\sharp}(j) = \epsilon_f^\partial(j)$ for any $j\neq i$.
\item if $\epsilon_f(i) = \partial$, then $\epsilon_{f_i^\sharp}(i) =
\partial$ and $\epsilon_{f_i^\sharp}(j) = \epsilon_f(j)$ for any $j\neq i$.
\item if $\epsilon_g(i) = 1$, then $\epsilon_{g_i^\flat}(i) = 1$ and
$\epsilon_{g_i^\flat}(j) = \epsilon_g^\partial(j)$ for any $j\neq i$.
\item if $\epsilon_g(i) = \partial$, then $\epsilon_{g_i^\flat}(i) =
\partial$ and $\epsilon_{g_i^\flat}(j) = \epsilon_g(j)$ for any $j\neq i$.
\end{enumerate}
	
	For instance, if $f$ and $g$ are binary connectives such that $\varepsilon_f = (1, \partial)$ and $\varepsilon_g = (\partial, 1)$, then $\varepsilon_{f^\sharp_1} = (1, 1)$, $\varepsilon_{f^\sharp_2} = (1, \partial)$, $\varepsilon_{g^\flat_1} = (\partial, 1)$ and $\varepsilon_{g^\flat_2} = (1, 1)$.
	
	\begin{remark}
		We warn the reader that the notation introduced above depends on which connective is taken as primitive, and needs to be carefully adapted to well known cases. For instance, consider the  `fusion' connective $\circ$ (which, when denoted  as $f$, is such that $\varepsilon_f = (1, 1)$). Its residuals
		$f_1^\sharp$ and $f_2^\sharp$ are commonly denoted $/$ and
		$\backslash$ respectively. However, if $\backslash$ is taken as the primitive connective $g$, then $g_2^\flat$ is $\circ = f$, and $g_1^\flat(x_1, x_2): = x_2/x_1 = f_1^\sharp (x_2, x_1)$. This example shows that, when identifying $g_1^\flat$ and $f_1^\sharp$, the conventional order of the coordinates is not preserved, and depends on which connective is taken as primitive.
	\end{remark}

	\begin{definition}\label{def_residuated_lattice}
		For any language $\mathcal{L}_\mathrm{LE}(\mathcal{F}, \mathcal{G})$, the {\em basic} $\mathcal{L}_\mathrm{LE}$-{\em logic with residuals} is defined by specializing Definition \ref{def:DLE:logic:general} to the language $\mathcal{L}_\mathrm{LE}^* = \mathcal{L}_\mathrm{LE}(\mathcal{F}^*, \mathcal{G}^*)$ 
		and closing under the following residuation rules for each $f\in \mathcal{F}$ and $g\in \mathcal{G}$ with $n_f, n_g \geq 1$:

			\[(\epsilon_f(i) = 1) \ \Tfrac{(\varphi_1,\ldots,\phi,\ldots, \varphi_{n_f}) \vdash \psi}{\phi\vdash f^\sharp_i(\varphi_1,\ldots,\psi,\ldots,\varphi_{n_f})} \> \> \>  \> \Tfrac{\phi \vdash g(\varphi_1,\ldots,\psi,\ldots,\varphi_{n_g})}{\phi \vdash g(\varphi_1,\ldots,\psi,\ldots,\varphi_{n_g})} \ (\epsilon_g(i) = 1)\]
			\[ (\epsilon_f(i) = \partial) \ \Tfrac{f(\varphi_1,\ldots,\phi,\ldots, \varphi_{n_f}) \vdash \psi}{f^\sharp_i(\varphi_1,\ldots,\psi,\ldots,\varphi_{n_f})\vdash \phi}  \> \> \>  \>  \Tfrac{\phi \vdash g(\varphi_1,\ldots,\psi,\ldots,\varphi_{n_g})}{\psi\vdash g^\flat_i(\varphi_1,\ldots, \phi,\ldots, \varphi_{n_g})} \ (\epsilon_g(i) = \partial) \]

		The double line in each rule above indicates that the rule should be read both top-to-bottom and bottom-to-top.
		Let $\mathbf{L}_\mathrm{LE}^*$ be the minimal basic $\mathcal{L}_\mathrm{LE}$-logic with residuals. 
For any language $\mathcal{L}_{\mathrm{LE}}$, by an {\em $\mathcal{L}_{\mathrm{LE}}$-logic with residuals} we understand any axiomatic extension of the basic $\mathcal{L}_{\mathrm{LE}}$-logic with residuals in $\mathcal{L}^*_{\mathrm{LE}}$.
	\end{definition}
	
	The algebraic semantics of $\mathbf{L}_\mathrm{LE}^*$ is given by the class of $\mathcal{L}_\mathrm{LE}$-algebras with residuals, defined as tuples $\bba = (A, \mathcal{F}^*, \mathcal{G}^*)$ such that $A$ is a lattice, and moreover,
		\begin{enumerate}
			
			\item for every $f\in \mathcal{F}$ s.t.\ $n_f\geq 1$, all $a_1,\ldots,a_{n_f}\in A$ and $b\in A$, and each $1\leq i\leq n_f$,
			\begin{itemize}
				\item
				if $\epsilon_f(i) = 1$, then $f(a_1,\ldots,a_i,\ldots a_{n_f})\leq b$ iff $a_i\leq f^\sharp_i(a_1,\ldots,b,\ldots,a_{n_f})$;
				\item
				if $\epsilon_f(i) = \partial$, then $f(a_1,\ldots,a_i,\ldots a_{n_f})\leq b$ iff $a_i\leq^\partial f^\sharp_i(a_1,\ldots,b,\ldots,a_{n_f})$.
				
			\end{itemize} We say that $f^\sharp_i$ is the \emph{right residual} of $f$ in its $i$\textsuperscript{th} coordinate.
			\item for every $g\in \mathcal{G}$ s.t.\ $n_g\geq 1$, any $a_1,\ldots,a_{n_g}\in A$ and $b\in A$, and each $1\leq i\leq n_g$,
			\begin{itemize}
				\item if $\epsilon_g(i) = 1$, then $b\leq g(a_1,\ldots,a_i,\ldots a_{n_g})$ iff $g^\flat_i(a_1,\ldots,b,\ldots,a_{n_g})\leq a_i$.
				\item
				if $\epsilon_g(i) = \partial$, then $b\leq g(a_1,\ldots,a_i,\ldots a_{n_g})$ iff $g^\flat_i(a_1,\ldots,b,\ldots,a_{n_g})\leq^\partial a_i$.
			\end{itemize}
			We say that $g_i^\flat$ is the \emph{left residual} of $g$ in its $i$\textsuperscript{th} coordinate. 
		\end{enumerate}
		It is also routine to prove using the Lindenbaum-Tarski construction that $\mathbf{L}_\mathrm{LE}^*$ (as well as any of its axiomatic extensions) is sound and complete w.r.t.\ the class of   $\mathcal{L}_\mathrm{LE}$-algebras with residuals (w.r.t.\ the suitably defined equational subclass, respectively). 
		
		\begin{definition} 
		\label{def: RA and LA}
			%
			For every definite positive PIA $\mathcal{L}_{\mathrm{LE}}$-formula $\phi = \phi(!x, \oz)$, and any definite negative PIA $\mathcal{L}_{\mathrm{LE}}$-formula $\psi = \psi(!x, \oz)$ such that $x$ occurs in them exactly once, the $\mathcal{L}^\ast_\mathrm{LE}$-formulas $\mathsf{LA}(\phi)(u, \oz)$ and $\mathsf{RA}(\psi)(u, \oz)$ (for $u \in Var - (x \cup \oz)$) are defined by simultaneous recursion as follows:
			
			\begin{center}
				\begin{tabular}{r c l}
					
					$\mathsf{LA}(x)$ &= &$u$;\\
					$\mathsf{LA}(g(\overline{\phi_{-j}(\oz)},\phi_j(x,\oz), \overline{\psi(\oz)}))$ &= &$\mathsf{LA}(\phi_j)(g^{\flat}_{j}(\overline{\phi_{-j}(\oz)},u, \overline{\psi(\oz)} ), \oz)$;\\
					$\mathsf{LA}(g(\overline{\phi(\oz)}, \overline{\psi_{-j}(\oz)},\psi_j(x,\oz)))$ &= &$\mathsf{RA}(\psi_j)(g^{\flat}_{j}(\overline{\phi(\oz)}, \overline{\psi_{-j}(\oz)},u), \oz)$;\\
					&&\\
					$\mathsf{RA}(x)$ &= &$u$;\\
					$\mathsf{RA}(f(\overline{\psi_{-j}(\oz)},\psi_j(x,\oz), \overline{\phi(\oz)}))$ &= &$\mathsf{RA}(\psi_j)(f^{\sharp}_{j}(\overline{\psi_{-j}(\oz)},u, \overline{\phi(\oz)} ), \oz)$;\\
					$\mathsf{RA}(f(\overline{\psi(\oz)}, \overline{\phi_{-j}(\oz)},\phi_j(x,\oz)))$ &= &$\mathsf{LA}(\phi_j)(f^{\sharp}_{j}(\overline{\psi(\oz)}, \overline{\phi_{-j}(\oz)},u), \oz)$.\\
				\end{tabular}
			\end{center}
			Above, $\overline{\phi_{-j}}$ denotes the vector obtained by removing the $j$th coordinate of $\overline{\phi}$.
			\end{definition}

\begin{lemma}
\label{lemma:polarities of la-ra}
For every definite positive PIA $\mathcal{L}_{\mathrm{LE}}$-formula $\phi = \phi(!x, \oz)$, and any definite negative PIA $\mathcal{L}_{\mathrm{LE}}$-formula $\psi = \psi(!x, \oz)$ such that $x$ occurs in them exactly once, 
\begin{enumerate}
\item if $+x\prec +\phi$ then  $\mathsf{LA}(\phi)(u, \oz)$ is monotone in $u$ and for each $z$ in $\oz$, $\mathsf{LA}(\phi)(u, \oz)$ has the opposite polarity to the polarity of $\phi$ in $z$;
\item if $-x\prec +\phi$ then  $\mathsf{LA}(\phi)(u, \oz)$ is antitone in $u$ and for each $z$ in $\oz$, $\mathsf{LA}(\phi)(u, \oz)$ has the same polarity as $\phi$ in $z$;
\item if $+x\prec +\psi$ then  $\mathsf{RA}(\psi)(u, \oz)$ is monotone in $u$ and for each $z$ in $\oz$, $\mathsf{RA}(\psi)(u, \oz)$ has the opposite polarity to the polarity of $\psi$ in $z$;
\item if $-x\prec +\psi$ then  $\mathsf{RA}(\psi)(u, \oz)$ is antitone in $u$ and for each $z$ in $\oz$, $\mathsf{RA}(\psi)(u, \oz)$ has the same polarity as $\psi$ in $z$.
\end{enumerate}
\end{lemma}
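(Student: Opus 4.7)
The plan is to prove all four statements simultaneously, by induction on the recursion that defines $\mathsf{LA}$ and $\mathsf{RA}$ in Definition \ref{def: RA and LA}. The base cases $\phi = x$ and $\psi = x$ are immediate: both $\mathsf{LA}(x)$ and $\mathsf{RA}(x)$ equal $u$, which is trivially monotone in $u$, and the array $\oz$ is empty so the polarity claim about $z\in \oz$ is vacuous; moreover, since $+x \prec +x$, only clauses (1) and (3) apply non-vacuously in the base.

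For the inductive step I would run through each of the four recursive clauses and combine three ingredients: (a) the sign-propagation rules of Definition \ref{def: signed gen tree}, which entail that the sign of $x$ in $+\phi$ equals its sign in $+\phi_j$ (resp.~$+\psi_j$) when $\epsilon_g(j) = 1$ (resp.~is flipped when $\epsilon_g(j) = \partial$); (b) the stipulations on the order-types of $g^\flat_j$ and $f^\sharp_j$ recalled in Section \ref{ssec:expanded tense language}, which record the monotonicity of the residual in $u$ and in each non-distinguished coordinate; and (c) the elementary rule that compositions of monotone/antitone maps follow the usual sign calculus. As a representative case, consider $\phi = g(\overline{\phi_{-j}(\oz)}, \phi_j(x,\oz), \overline{\psi(\oz)})$ with $\epsilon_g(j) = 1$, so that $\mathsf{LA}(\phi) = \mathsf{LA}(\phi_j)(g^\flat_j(\overline{\phi_{-j}}, u, \overline{\psi}), \oz)$. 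Then $g^\flat_j$ is monotone in $u$ (since $\epsilon_{g^\flat_j}(j)=1$) and, by the inductive hypothesis on $\phi_j$, $\mathsf{LA}(\phi_j)(v,\oz)$ is monotone (resp.~antitone) in $v$ precisely when $+x \prec +\phi_j$ (resp.~$-x \prec +\phi_j$); composing yields clauses (1) and (2) for $\phi$.

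The $z$-polarity claim is obtained by splitting on where $z$ occurs. If $z$ occurs inside $\phi_j$, the inductive hypothesis gives the polarity of $\mathsf{LA}(\phi_j)$ in $z$ as the opposite of that of $\phi_j$ in $z$, which agrees with $\phi$'s polarity in $z$ because the $j$th coordinate of $g$ is positive. If $z$ occurs inside some $\phi_h\in \overline{\phi_{-j}}$, then $\epsilon_{g^\flat_j}(h) = \epsilon^\partial_g(h) = \partial$, so $g^\flat_j$ is antitone in its $h$th slot; combining this with the polarity of $\mathsf{LA}(\phi_j)$ in its first argument (monotone or antitone by IH) and the known polarity of $\phi_h$ in $z$ gives exactly the opposite of $\phi$'s polarity in $z$. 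The case $z \in \overline{\psi}$ is dual: $g^\flat_j$ is now monotone in that slot, while the sign of $\psi_h$ in $+\phi$ is the flipped one, and the signs once again cancel to yield opposition. The remaining clause of $\mathsf{LA}$ (with $\epsilon_g(j) = \partial$) and the two clauses of $\mathsf{RA}$ are handled by the order-dual arguments, invoking the appropriate IH among (1)--(4).

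The argument is routine but the main obstacle is the bookkeeping: in every case one must simultaneously track the sign of $x$ in the outer tree versus in the inner PIA-child, the coordinate-wise monotonicity of the residual $g^\flat_j$ or $f^\sharp_j$ as stipulated in Section \ref{ssec:expanded tense language}, and the induced polarities through composition. A helpful organizing device is the parallel structure of clauses (1)--(4): the eight sub-cases (four recursive clauses times two polarities of $x$) each follow the same template, varying only in which IH clause is invoked and which monotonicity/antitonicity label the residual carries.
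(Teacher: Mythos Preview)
Your proposal is correct and follows essentially the same approach as the paper: simultaneous induction on the recursion defining $\mathsf{LA}$ and $\mathsf{RA}$, with the base case trivial and the inductive step combining the order-type stipulations for $g^\flat_j$, $f^\sharp_i$ with the sign calculus for compositions. The only stylistic difference is that the paper packages the $z$-polarity argument more compactly, observing in one step that $g^\flat_j(\overline{\phi'_{-j}(\oz)}, u, \overline{\psi'(\oz)})$ has the opposite polarity to $\phi$ in every $z$ occurring in the non-$j$ coordinates (since $\epsilon_{g^\flat_j}(h) = \epsilon_g^\partial(h)$ for $h\neq j$), whereas you split explicitly on whether $z$ lies in $\phi_j$, in some $\phi_h$, or in some $\psi_h$; the content is the same.
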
		
\begin{proof}
By simultaneous induction on $\phi$ and $\psi$. If $\phi = \psi = x$, then the assumptions of item 1 and 3 are satisfied; then $\mathsf{RA}(\psi) = \mathsf{LA}(\phi) = u$ is clearly monotone in $u$ and the second part of the statement is vacuously satisfied. 
As to the inductive step, if $\phi (!x, \oz)= g(\overline{\phi'_{-j}(\oz)},\phi'_j(x,\oz),\overline{\psi'(\oz)})$, with each  $\phi'$ in $\overline{\phi'}$ being positive PIA and  each $\psi'$ in $\overline{\psi'}$ being negative PIA, then  $g_j^\flat\in \mathcal{F}^\ast$ is monotone in its $j$th coordinate and has the opposite polarity of $\epsilon_g$ in all the other coordinates. Hence, $g_j^\flat(\overline{\phi'_{-j}(\oz)},u,\overline{\psi'(\oz)})$ has the opposite polarity of $\phi (!x, \oz)$ in each $z$ in $\oz$. Two cases can occur: (a) if  $+x\prec +\phi_j$, then by induction hypothesis, $\mathsf{LA}(\phi_j)(u', \oz)$ is monotone in $u'$, and has the opposite polarity of $\phi_j$ in every $z$ in $\oz$. Hence,
\[ \mathsf{LA}(\phi) = \mathsf{LA}(\phi_j)(g_j^\flat(\overline{\phi'_{-j}(\oz)},u,\overline{\psi'(\oz)})/u', \oz) \]
is monotone in $u$ and has the opposite polarity to the polarity of $\phi$ in each $z$ in $\oz$.
(b) if $-x\prec +\phi_j$, then  by induction hypothesis, $\mathsf{LA}(\phi_j)(u', \oz)$ is antitone in $u'$, and has the same polarity as $\phi_j$ in every $z$ in $\oz$. 
 Hence,
\[ \mathsf{LA}(\phi) = \mathsf{LA}(\phi_j)(g_j^\flat(\overline{\phi'_{-j}(\oz)},u,\overline{\psi'(\oz)})/u', \oz) \]
is antitone in $u$ and has the same polarity as  $\phi$ in each $z$ in $\oz$.
The remaining cases are $\phi: =g(\overline{\phi'(\oz)}, \overline{\psi'_{-h}(\oz)}, \psi_h(x, \oz))$, $\psi: = f(\overline{\phi'_{-j}(\oz)}, \phi'_j(x, \oz), \overline{\psi'(\oz)})$, and $\psi: = f(\overline{\phi'(\oz)}, \overline{\psi'_{-h}(\oz)}, \psi'_h(x, \oz))$ and are shown in a similar way.
\end{proof}

\subsection{The language of non-distributive ALBA}\label{Subsec:Expanded:Land}

The expanded language of perfect LEs will include the connectives corresponding to all the residual of the original connectives, as well as a denumerably infinite set of sorted variables $\mathsf{NOM}$ called {\em nominals},
ranging over the completely join-irreducible elements of perfect LEs (or, constructively, on the closed elements of the constructive canonical extensions, as in \cite{CoPa-constructive}), and a denumerably infinite set of
sorted variables $\mathsf{CO\text{-}NOM}$, called {\em co-nominals}, ranging over the completely meet-irreducible elements of perfect LEs (or, constructively on the open elements of the constructive canonical extensions). The elements of $\mathsf{NOM}$ will be denoted with $\nomi, \nomj$, possibly indexed, and those of
$\mathsf{CO\text{-}NOM}$ with $\cnomm, \cnomn$, possibly indexed.


Let us introduce the expanded language formally: the \emph{formulas} $\phi$ of $\mathcal{L}_\mathrm{LE}^{+}$ are given by the following recursive definition:
\begin{center}
\begin{tabular}{r c |c|c|c|c|c|c c c c c c c}
$\phi ::= $ &$\nomj$ & $\cnomm$ & $\psi$ & $\phi\wedge\phi$ & $\phi\vee\phi$ & $f(\overline{\phi})$ &$g(\overline{\phi})$
\end{tabular}
\end{center}
with $\psi  \in \mathcal{L}_\mathrm{LE}$, $\nomj \in \mathsf{NOM}$ and $\cnomm \in \mathsf{CO\text{-}NOM}$,  $f\in \mathcal{F}^*$ and $g\in \mathcal{G}^*$.
As in the case of $\mathcal{L}_\mathrm{LE}$, we can form inequalities and quasi-inequalities based on $\mathcal{L}_\mathrm{LE}^{+}$. 
If $\mathbb{A}$ is a perfect LE,  then an \emph{assignment} for ${\mathcal{L}_\mathrm{LE}^+}$ on $\mathbb{A}$ is a map $V: \mathsf{PROP} \cup \mathsf{NOM} \cup \mathsf{CO\mbox{-}NOM} \rightarrow \bbA$ sending propositional variables to elements of $\mathbb{A}$, sending nominals to $\jty(\mathbb{A})$ and co-nominals to $\mty(\mathbb{A})$. For any LE $\mathbb{A}$, an \emph{admissible assignment}\label{admissible:assignment} for ${\mathcal{L}_\mathrm{LE}^+}$  on $\mathbb{A}$ is an assignment $V$ for ${\mathcal{L}_\mathrm{LE}^+}$ on $\mathbb{A}^{\delta}$, such that $V(p) \in \bba$ for each $p \in \mathsf{PROP}$. In other words, the assignment $V$ sends propositional variables to elements of the subalgebra $\bba$, while nominals and co-nominals get sent to the completely join-irreducible (resp.~closed) and the completely meet-irreducible (resp.~open) elements of $\bbas$, respectively. 

\subsection{Non-distributive ALBA on analytic inductive LE-inequalities}\label{Spec:Alg:Section} 

In this subsection, we describe a successful ALBA-run on an analytic $(\Omega, \epsilon)$-inductive 
 $\mathcal{L}_\mathrm{LE}$-inequality $\phi \leq \psi$. The procedure described below serves {\em both} to compute the first order correspondent of the given inequality in various semantic settings, as discussed e.g.~in \cite{CoPa-nondist, CoPa-Dist, conradie2019rough, conradie2020non}, {\em and} to compute the shape of the analytic structural rules corresponding to the given inequality, as discussed in \cite{GMPTZ, syntactic-completeness}.
 
 The run proceeds in three stages. The first stage preprocesses $\phi \leq \psi$ by eliminating all uniformly  occurring propositional variables, and applying distribution and splitting rules exhaustively. This produces a finite set of inequalities, $\phi'_i \leq \psi'_i$, $1 \leq i \leq n$, from which  ALBA forms the \emph{initial quasi-inequalities}. 

The second stage (called the reduction  stage) transforms the quasi-inequalities 
through the application of transformation rules, which are listed below. The aim is to eliminate all  propositional variables 
in favour of terms built from constants,  nominals and co-nominals (for an expanded discussion on the general reduction strategy, the reader is referred to \cite{UnifCorresp,ConPalSur}). A system for which this has been done will be called \emph{pure} or \emph{purified}. The actual eliminations are effected through the Ackermann-rules, while the other rules are used to bring the quasi-inequalities into the appropriate shape which make these applications possible. 

The third stage either reports failure if some system could not be purified, or else returns the conjunction of the pure quasi-inequalities 
which we denote by $\mathsf{ALBA}(\phi \leq \psi)$.
We now outline each of the three stages in more detail.

\subsection{Stage 1: Preprocessing and initialization} ALBA receives an analytic $(\Omega, \epsilon)$-inductive 
 $\mathcal{L}_\mathrm{LE}$-inequality $\phi \leq \psi$ as input. It applies the following {\bf rules for elimination of monotone variables}  to $\phi \leq \psi$ exhaustively, in order to eliminate any propositional variables which occur uniformly:
 
 \[ \frac{\alpha(p) \leq \beta(p)}{\alpha(\top) \leq \beta(\top)} \ \ \ \ \ \ \ \frac{\gamma(p) \leq \delta(p)}{\gamma(\bot) \leq \delta(\bot)}\]
for $\alpha(p) \leq \beta(p)$ $1$-uniform in $p$ and $\gamma(p) \leq \delta(p)$ $\partial$-uniform in $p$, respectively (see the discussion after Definition \ref{def: signed gen tree}).

Next, ALBA exhaustively distributes $f\in \mathcal{F}$ over $+\vee$ in its positive coordinates and over $-\wedge$ in its negative coordinates, and  $g\in \mathcal{G}$ over $-\wedge$ in its positive coordinates and over $+\vee$ in its negative coordinates, so as to bring occurrences of $+\vee$ and $-\wedge$ to the surface wherever this is possible, and then eliminate them via exhaustive applications of {\em splitting} rules.
\paragraph{Splitting-rules}

\[\frac{\alpha \leq \beta \wedge \gamma
}{\alpha \leq \beta \quad \alpha \leq \gamma} \ \ \ \ \ \ \  \frac{\alpha \vee \beta \leq \gamma}{\alpha \leq \gamma \quad \beta \leq \gamma}
\]

This gives rise to  a set of {\em definite} analytic inductive inequalities $\{\phi_i' \leq \psi_i'\mid 1\leq i\leq n\}$, each of which will be treated separately.

Next, in each PIA-subformula of each such definite analytic inductive inequality, ALBA exhaustively distributes $-f\in \mathcal{F}$ over $-\vee$ in its positive coordinates and over $+\wedge$ in its negative coordinates, and  $+g\in \mathcal{G}$ over $+\wedge$ in its positive coordinates and over $-\vee$ in its negative coordinates, so as to bring occurrences of $+\vee$ and $-\wedge$ as close as possible to the root of each PIA subformula. Let $(\varphi\leq \psi)[\overline{\alpha}/!\overline{x}, \overline{\beta}/!\overline{y},\overline{\gamma}/!\overline{z}, \overline{\delta}/!\overline{w}]$ denote one of the inequalities resulting from this step (we suppress the indices). Now ALBA transforms $(\varphi\leq \psi)[\overline{\alpha}/!\overline{x}, \overline{\beta}/!\overline{y},\overline{\gamma}/!\overline{z}, \overline{\delta}/!\overline{w}]$ into the following \emph{initial quasi-inequality} (the soundness of these steps on perfect LEs, or constructive canonical extensions, has been discussed in \cite[Section 6]{CoPa-nondist} and \cite[Section 5]{CoPa-constructive}):
\begin{equation}
\label{eq:initial quasi-inequality}
 \forall\overline{\nomj} \forall\overline{\cnomm}\forall\overline{\nomi}\forall\overline{\cnomn}(( \overline{\nomj}\leq \overline{\alpha}\ \&\ \overline{\beta}\leq \overline{\cnomm} \ \&\ \overline{\nomi}\leq \overline{\gamma} \ \&\ \overline{\delta}\leq \overline{\cnomn})\Rightarrow (\varphi\leq \psi)[!\overline{\nomj}/!\overline{x}, !\overline{\cnomm}/!\overline{y},!\overline{\nomi}/!\overline{z}, !\overline{\cnomn}/!\overline{w}] ).
\end{equation}
 In the quasi-inequality above, symbols such as $\overline{\nomj}\leq \overline{\alpha}$ denote the conjunction of inequalities of the form $\nomj_k\leq \alpha_k$ for each $\nomj_k$ in $\overline{\nomj}$ and $\alpha_k$ in $\overline{\alpha}$. 
Before passing each initial quasi-inequality  separately to stage 2 (described below), by exhaustively applying splitting rules to the top-most nodes of the formulas in $\overline{\alpha}$ and $\overline{\beta}$, we transform each quasi-inequality into one of similar shape as \eqref{eq:initial quasi-inequality} and in which each $\alpha$ in $\overline{\alpha}$ and each $\beta$ in $\overline{\beta}$ contains at most one critical occurrence. Hence, w.l.o.g.~we can assume that each $\alpha$ in $\overline{\alpha}$ and $\beta$ in $\overline{\beta}$ contains exactly one $\epsilon$-critical occurrence (since in case any of them does not, the corresponding inequality will be $\epsilon^\partial$-uniform, and hence it can be assimilated to the inequalities $\overline{\nomi}\leq \overline{\gamma}$ or $\overline{\delta}\leq \overline{\cnomn}$). Hence, we can represent the resulting quasi-inequality as follows:  

\begin{equation}
\label{eq:initial quasi-inequality p and q}
\begin{array}{l}
 \forall\overline{\nomj} \forall\overline{\cnomm}\forall\overline{\nomi}\forall\overline{\cnomn}(( \overline{\nomj}\leq \overline{\alpha_p}\ \&\  \overline{\nomj}\leq \overline{\alpha_q}\ \&\ \overline{\beta_p}\leq \overline{\cnomm} \ \&\ \overline{\beta_q}\leq \overline{\cnomm} \ \&\ \overline{\nomi}\leq \overline{\gamma} \ \&\ \overline{\delta}\leq \overline{\cnomn})\\
\> \> \> \> \> \ \ \ \ \ \ \ \ \ \ \ \ \ \ \ \ \ \ \Rightarrow (\varphi\leq \psi)[!\overline{\nomj}/!\overline{x}, !\overline{\cnomm}/!\overline{y},!\overline{\nomi}/!\overline{z}, !\overline{\cnomn}/!\overline{w}] ),
 \end{array}
\end{equation}
where $\overline{p}$ (resp.~$\overline{q}$) is the vector of the atomic propositions in $\phi\leq \psi$ such that $\varepsilon(p) = 1$ (resp.~$\varepsilon(q) = \partial$), and 
the subscript in each PIA-formula in $\overline{\alpha}$ and $\overline{\beta}$ indicates the unique $\varepsilon$-critical propositional variable occurrence contained in that formula.

\subsection{Stage 2: Reduction and elimination}\label{Sec:ReductionElimination}

The aim of this stage is to eliminate all occurring propositional variables from a given initial quasi-inequality \eqref{eq:initial quasi-inequality}. This is done by means of the splitting rules, introduced above, as well as the following  \emph{residuation rules} and \emph{Ackermann-rules}. The rules applied in this subsection are collectively called \emph{reduction rules}. The terms and inequalities in this subsection are from $\mathcal{L}_\mathrm{LE}^{+}$.

\paragraph{Residuation rules} These rules operate on the inequalities in $S$, by rewriting a chosen inequality in $S$ into another inequality. For every $f\in \mathcal{F}$ and $g\in \mathcal{G}$, and any $1\leq i\leq n_f$ and $1\leq j\leq n_g$,

\[ \frac{f(\phi_1,\ldots,\phi_i,\ldots,\phi_{n_f}) \leq \psi }{\phi_i\leq f^\sharp_i(\phi_1,\ldots,\psi,\ldots,\phi_{n_f})} \ \epsilon_f(i) = 1  \ \ \ \ \ \ \  \frac{f(\phi_1,\ldots,\phi_i,\ldots,\phi_{n_f}) \leq \psi }{f^\sharp_i(\phi_1,\ldots,\psi,\ldots,\phi_{n_f})\leq \phi_i} \ \epsilon_f(i) = \partial \]
\[ \frac{\psi\leq g(\phi_1,\ldots,\phi_i,\ldots,\phi_{n_g})}{g^\flat_i(\phi_1,\ldots,\psi,\ldots,\phi_{n_g})\leq \phi_i}\ \epsilon_g(i) = 1  \ \ \ \ \ \ \ 
\frac{\psi\leq g(\phi_1,\ldots,\phi_i,\ldots,\phi_{n_g})}
{\phi_i\leq g^\flat_i(\phi_1,\ldots,\psi,\ldots,\phi_{n_g})} \ {\epsilon_g(i) = \partial} \]

\paragraph{Right Ackermann-rule} $\phantom{a}$
\[ \frac{ (\{\alpha_i \leq p \mid 1 \leq i \leq n \} \cup \{ \beta_j(p)\leq \gamma_j(p) \mid 1 \leq j \leq m \}, \;\; \mathsf{Ineq})}{(\{ \beta_j(\bigvee_{i=1}^n \alpha_i)\leq \gamma_j(\bigvee_{i=1}^n \alpha_i) \mid 1 \leq j \leq m \},\;\; \mathsf{Ineq})}\ {(RAR)}
\]
where:
\begin{itemize}
\item $p$ does not occur in $\alpha_1, \ldots, \alpha_n$ or in $\mathsf{Ineq}$,
\item $\beta_{1}(p), \ldots, \beta_{m}(p)$ are positive in $p$, and
\item $\gamma_{1}(p), \ldots, \gamma_{m}(p)$ are negative in $p$.

\end{itemize}

\paragraph{Left Ackermann-rule}$\phantom{a}$
\[ \frac{(\{ p \leq \alpha_i \mid 1 \leq i \leq n \} \cup \{ \beta_j(p)\leq \gamma_j(p) \mid 1 \leq j \leq m \}, \;\; \mathsf{Ineq})}{\{ \beta_j(\bigwedge_{i=1}^n \alpha_i)\leq \gamma_j(\bigwedge_{i=1}^n \alpha_i) \mid 1 \leq j \leq m \},\;\; \mathsf{Ineq})}\ (LAR)
\]
where:
\begin{itemize}
\item $p$ does not occur in $\alpha_1, \ldots, \alpha_n$ or in $\mathsf{Ineq}$,
\item $\beta_{1}(p), \ldots, \beta_{m}(p)$ are negative in $p$, and
\item $\gamma_{1}(p), \ldots, \gamma_{m}(p)$ are positive in $p$.

\end{itemize}

By applying adjunction and residuation rules on all PIA-formulas $\alpha$ and $\beta$, the antecedent of \eqref{eq:initial quasi-inequality p and q} can be equivalently written as follows (cf.~Definition \ref{def: RA and LA}):  
\begin{equation}
\label{eq: adj anind}
\begin{array}{l}
 \overline{\mathsf{LA}(\alpha_p)[\nomj/u, \overline{p},\overline{q}]}\leq\overline{p}\ \&\   \overline{\mathsf{RA}(\beta_p)[\cnomm/u, \overline{p},\overline{q}]}\leq \overline{p}\ \&\ \overline{q}\leq \overline{\mathsf{LA}(\alpha_q)[\nomj/u, \overline{p},\overline{q}]}\ \\

  \> \>  \> \ \&\ \overline{q}\leq\overline{\mathsf{RA}(\beta_q)[\cnomm/u, \overline{p},\overline{q}]}
\&\ \overline{\nomi}\leq \overline{\gamma}\ \&\ \overline{\delta}\leq \overline{\cnomn}.
\end{array}
 \end{equation}
Notice that the `parametric' (i.e.~non-critical) variables in $\overline{p}$ and $\overline{q}$ actually occurring in each formula $\mathsf{LA}(\alpha_p)[\nomj/u, \overline{p},\overline{q}]$, $\mathsf{RA}(\beta_p)[\cnomm/u, \overline{p},\overline{q}]$, $\mathsf{LA}(\alpha_q)[\nomj/u, \overline{p},\overline{q}]$, and $\mathsf{RA}(\beta_q)[\cnomm/u, \overline{p},\overline{q}]$ are those that are strictly $<_\Omega$-smaller than the (critical) variable indicated in the subscript of the given PIA-formula. After applying adjunction and residuation as indicated above, the resulting quasi-inequality is in Ackermann shape relative to the $<_\Omega$-minimal variables.

For every $p\in\overline{p}$ and  $q\in\overline{q}$ let us define the sets $\mathsf{Mv}(p)$ and $\mathsf{Mv}(q)$ by recursion on $<_\Omega$ as follows:
\begin{itemize}
	\item
	$\mathsf{Mv}(p):=\{\mathsf{LA}(\alpha_p)[\nomj_k/u,\overline{\mathsf{mv}(p)}/\overline{p},\overline{\mathsf{mv}(q)}/\overline{q}], \mathsf{RA}(\beta_p)[\cnomm_h/u,\overline{\mathsf{mv}(p)}/\overline{p},\overline{\mathsf{mv}(q)}/\overline{q}]\mid 1\leq k\leq n_{i_1}, 1\leq h\leq n_{i_2}, \overline{\mathsf{mv}(p)}\in\prod_p \mathsf{Mv}(p),\overline{\mathsf{mv}(q)}\in\prod_q \mathsf{Mv}(q)  \}$
	\item $\mathsf{Mv}(q):=\{\mathsf{LA}(\alpha_q)[\nomj_h/u,\overline{\mathsf{mv}(p)}/\overline{p},\overline{\mathsf{mv}(q)}/\overline{q}], \mathsf{RA}(\beta_q)[\cnomm_k/u,\overline{\mathsf{mv}(p)}/\overline{p},\overline{\mathsf{mv}(q)}/\overline{q}]\mid 1\leq h\leq m_{j_1}, 1\leq k\leq m_{j_2}, \overline{\mathsf{mv}(p)}\in\prod_p \mathsf{Mv}(p),\overline{\mathsf{mv}(q)}\in\prod_q{\mathsf{Mv}(q)}  \}$
\end{itemize}
where,  $n_{i_1}$ (resp.~$n_{i_2}$) is the number of occurrences of $p$ in $\alpha$s (resp.~in $\beta$s) for every $p\in\overline{p}$, and $m_{j_1}$ (resp.~$m_{j_2}$) is the number of occurrences of $q$ in $\alpha$s (resp.~in $\beta$s) for every $q\in\overline{q}$. 
By induction on $<_\Omega$, we can apply the Ackermann rule exhaustively so as to eliminate all variables $\overline{p}$  and  $\overline{q}$.  Then the antecedent of the resulting {\em purified} quasi-inequality has the following form: 
 
\begin{equation}
\label{eq: after Ackermann anind}
\overline{\nomi}\leq \overline{\gamma}\left[\overline{\bigvee\mathsf{Mv}(p)}/\overline{p}, \overline{\bigwedge\mathsf{Mv}(q)}/\overline{q}\right]\quad \quad \overline{\delta}\left [\overline{\bigvee\mathsf{Mv}(p)}/\overline{p}, \overline{\bigwedge\mathsf{Mv}(q)}/\overline{q}\right]\leq \overline{\cnomn}.\end{equation}
Up to now, we have only made use of the assumption that the initial inequality is inductive, and not also analytic.  The next step is not needed for the elimination of propositional variables, since we have already reached a successful elimination. However, it will turn out to be useful when discussing canonicity. 

By assumption, $\varepsilon(p) = 1$ for every $p$ in $\overline{p}$ and $\varepsilon(q) = \partial$ for every $q$ in $\overline{q}$; recalling that every $+ \gamma$ (resp. $-\delta$) agrees with  $\epsilon^\partial$ and that $\gamma$ (resp. $\delta$) is positive (resp. negative) PIA for every $\gamma \in \overline{\gamma}$ (resp. $\delta \in \overline{\delta}$) (this is precisely what the analiticity assumption yields), the following semantic equivalences hold for each $\gamma$ in $\overline{\gamma}$ and $\delta$ in $\overline{\delta}$:
{\footnotesize
\[\gamma\left[\overline{\bigvee\mathsf{Mv}(p)}/\overline{p}, \overline{\bigwedge\mathsf{Mv}(q)}/\overline{q}\right] = \bigwedge \left\lbrace\gamma\left[\overline{\mathsf{mv}(p)}/\overline{p}, \overline{\mathsf{mv}(q)}/\overline{q}\right] \mid  \overline{\mathsf{mv}(p)} \in \prod_p{\mathsf{Mv}(p)}, \overline{\mathsf{mv}(q)} \in \prod_q{\mathsf{Mv}(q)}  \right\rbrace .\]
\[ \delta\left[\overline{\bigvee\mathsf{Mv}(p)}/\overline{p}, \overline{\bigwedge\mathsf{Mv}(q)}/\overline{q}\right] = \bigvee \left\lbrace\delta\left[\overline{\mathsf{mv}(p)}/\overline{p}, \overline{\mathsf{mv}(q)}/\overline{q}\right] \mid  \overline{\mathsf{mv}(p)} \in \prod_p{\mathsf{Mv}(p)}, \overline{\mathsf{mv}(q)} \in \prod_q{\mathsf{Mv}(q)} \right\rbrace.\]
}
Hence, by applying splitting, for every $\gamma$  in $\overline{\gamma}$ and $\delta$  in $\overline{\delta}$, the corresponding inequalities  in \eqref{eq: after Ackermann anind} can be equivalently replaced by (at most) $\Sigma_{n, m}(n_i m_j)$ inequalities of the form
\begin{equation}\label{last eq}
\nomi\leq \gamma\left[\overline{\mathsf{mv}(p)}/\overline{p}, \overline{\mathsf{mv}(q)}/\overline{q}\right]\quad \quad \delta\left[\overline{\mathsf{mv}(p)}/\overline{p}, \overline{\mathsf{mv}(q)}/\overline{q}\right]\leq\cnomn,
\end{equation}
 where $\gamma\left[\overline{\mathsf{mv}(p)}/\overline{p}, \overline{\mathsf{mv}(q)}/\overline{q}\right]$ is strictly syntactically open and $\delta\left[\overline{\mathsf{mv}(p)}/\overline{p}, \overline{\mathsf{mv}(q)}/\overline{q}\right]$ is strictly syntactically closed (cf.~Definition \ref{strictlySyn:Opn:Clsd:Definition} and Lemma \ref{Syn:Shape:Lemma}). 


\section{Slanted LE-algebras and their canonical extensions}\label{subsec:slanted lattice:expansions}

\subsection{Basic definitions and properties}
\begin{definition}
\label{def:c-slanted o-slanted}
Let $A, B$ be lattices. For any $n_f\in \mathbb{N}$ and any order-type $\epsilon_f$ on $n_f$,  a coordinatewise finitely join-preserving $n_f$-ary map $f: B^{\epsilon_f}\to A^\delta$ is  {\em c-slanted} if its range is included in  $\kbbas$. When $B = A$, the map $f$ is a {\em c-slanted operation on} $A$. For any $n_g\in \mathbb{N}$ and any order-type $\epsilon_g$ on $n_g$,  a coordinatewise finitely meet-preserving $n_g$-ary map $g: B^{\epsilon_g}\to A^\delta$ is {\em o-slanted} if its range is included in  $\obbas$. When $B = A$, the map $g$ is an {\em o-slanted operation on} $A$.
\end{definition}

By definition, slanted maps are normal, in the sense of Definition \ref{def:DLE}, as maps $B^\epsilon\to A^\delta$. Examples of (properly) c-slanted (resp.~o-slanted) operations arise as the restrictions to the original algebra of the left (resp.~right) adjoints and residuals of the $\pi$-extensions (resp.~$\sigma$-extensions) of standard normal $g$-type (resp.~$f$-type) operations (cf.~\cite[Lemma 10.6]{CoPa-nondist}) when the signature $(\mathcal{F}, \mathcal{G})$ is not closed under adjoints and residuals.
	\begin{definition}
		\label{def:slanted LE}
		For any LE-signature $(\mathcal{F}, \mathcal{G})$, a {\em  slanted (distributive) lattice expansion} (abbreviated as slanted (D)LE or s-(D)LE) is a tuple $\bba = (A, \mathcal{F}^\bbA, \mathcal{G}^\bbA)$ such that $A$ is a bounded (distributive) lattice, $\mathcal{F}^\bbA = \{f^\bbA\mid f\in \mathcal{F}\}$ and $\mathcal{G}^\bbA = \{g^\bbA\mid g\in \mathcal{G}\}$, such that every $f^\bbA\in\mathcal{F}^\bbA$ (resp.\ $g^\bbA\in\mathcal{G}^\bbA$) is an $n_f$-ary (resp.\ $n_g$-ary) c-slanted (resp.~o-slanted) operation on $A$. A {\em  slanted Boolean algebra expansion} (abbreviated as slanted BAE or s-BAE) is a structure $\bba = (A, \mathcal{F}^\bbA, \mathcal{G}^\bbA)$ such that $\mathcal{F}^\bbA$ and $\mathcal{G}^\bbA$ are as above, and $A$ is a Boolean algebra.
	\end{definition}
	
	Slanted LEs generalize the standard notion of normal LE (cf.~Definition \ref{def:DLE}), as follows. Via the canonical embedding $e: A\to A^\delta$, and using compactness, it is not difficult to see that $e[A] =    K(A^\delta)\cap O(A^\delta)$ for any lattice $A$. Hence,  any standard normal operation $h$ on $A$ gives rise to  a slanted  operation $e\cdot h$ on $A$ which will be c-slanted  if $h$ is coordinatewise finitely join-preserving or meet-reversing,  and o-slanted if if $h$ is coordinatewise finitely meet-preserving or join-reversing. Conversely, any slanted operation on $A$ the range of which is included in $K(A^\delta)\cap O(A^\delta) = e[A]$ gives rise to a normal operation on $A$ in the standard sense. Hence, any standard LE $\bba$ can be `lifted' to a slanted LE $\bba^\star$ in the same signature by pre-composing all operations of $\bba$ with $e$, and any slanted LE $\mathbb{S}$ based on $A$ such that all  its operations target $K(A^\delta)\cap O(A^\delta) = e[A]$ gives rise to a standard LE $\mathbb{S}_\star$ in the same signature, and moreover, $(\bba^\star)_\star = \bba$ and $(\mathbb{S}_\star)^\star = \mathbb{S}$.
	 
In the remainder of the paper,
we will abuse notation and write e.g.\ $f$ for $f^\bbA$ when this causes no confusion.
Slanted LEs constitute the main semantic environment of the present paper.

\begin{example}\label{Exem:gen_imp} Examples of slanted BAEs and LEs arise in connection with subordination algebras \cite{BezSouVe}, quasi-modal algebras \cite{Quasimodal} and generalized implication lattices \cite{CaCeJa}.  The slanted algebras arising from subordination and quasi-modal algebras will be described in detail in Section \ref{sec:subordination}. Let us consider here the case of generalized implications. 

A \emph{generalized implication lattice} \cite{CaCeJa} is a pair $\mathbb{L} = (L, \Rightarrow)$ such that  $L$ is a bounded distributive lattice, and  $\Rightarrow: L\times L\to \mathcal{I}(L)$ (where $\mathcal{I}(L)$ denotes the set of the ideals of $L$) satisfies the following conditions:  for every $a,b$ and $c \in L$,
\begin{enumerate}
\item $(a \Rightarrow b) \cap (a\Rightarrow c) = a\Rightarrow (b \wedge c)$,
\item $(a\Rightarrow b) \cap (b \Rightarrow c) = (a \vee b) \Rightarrow c$,
\item $(a\Rightarrow b) \cap (b\Rightarrow c) \subseteq a \Rightarrow c$,
\item $a \Rightarrow a = L$.
\end{enumerate}

For every generalized implication lattice $\mathbb{L}$, let $\mathbb{L}^*:=( L, g_{\Rightarrow})$  be its associated slanted algebra, where $g_{\Rightarrow} : L\times L \rightarrow L^\delta$ is defined by the assignment $(a,b)\mapsto \bigvee\lbrace c \in L \mid c \in a\Rightarrow b \rbrace$. It can be readily verified that $g_{\Rightarrow}$ is a binary o-slanted operator of order-type  $(\partial, 1)$ satisfying the inequalities $1\leq g_{\Rightarrow}(a,a)$ and $g_{\Rightarrow}(a,b) \wedge g_{\Rightarrow}(b,c) \leq g_{\Rightarrow}(a,c)$ for every $a,b,c \in L$ which are analytic Sahlqvist and analytic inductive respectively. Conversely, if $\mathbb{A} = (L, g)$ is an s-DLE s.t.~$\mathcal{F} = \varnothing$ and $\mathcal{G}: = \{g\}$ with $n_g = 2$ and $\epsilon_g = (\partial, 1)$  satisfying the properties verified by $g_{\Rightarrow}$, then $\mathbb{A}_*: =(L, \Rightarrow_g)$, where $\Rightarrow_g: L\times L\to \mathcal{I}(L)$ is defined by the assignment  $(a, b) \mapsto \lbrace c \in L \mid c \leq g(a,b) \rbrace$, is a generalized implication lattice. It is routine to show that $(\mathbb{L}^*)_* = \mathbb{L}$ for every generalized implication lattice $\mathbb{L}$, and $(\mathbb{A}_*)^* = \mathbb{A}$  for every s-DLE as above. 
\end{example}

As done in  \cite[Section 2.3]{GeJo04} and \cite[Section 5]{PaSoZh15},  the $\sigma$- and $\pi$-extensions of slanted $n$-ary operations of a given bounded lattice  $\bba$ are defined not as maps $(\bba^n)^\delta\to (\bbas)^\delta$ as in the standard definition (cf.~\cite[Definition 4.1]{GH01}), but as maps $(\bba^n)^\delta\to \bbas$. Towards the formal definition, recall (cf.~Section \ref{Perfect_Le}) that 
in order to extend operations of any arity which are monotone or antitone in each coordinate from a lattice $\bba$ to its canonical extension, treating the case
of {\em monotone} and {\em unary} operations suffices: 
\begin{definition}
\label{def: sigma and pi extensions of slanted}
Let $A, B$ be bounded lattices. For every unary, c-slanted map $f : B \to A^\delta$, the $\sigma$-{\em extension} of $f$ is the map $f^\sigma: B^\delta \to A^\delta$ defined firstly by declaring, for every $k\in K(B^\delta)$,
$$f^\sigma(k):= \bigwedge\{ f(a)\mid a\in B\mbox{ and } k\leq a\},$$ and then, for every $u\in B^\delta$,
$$f^\sigma(u):= \bigvee\{ f^\sigma(k)\mid k\in K(B^\delta)\mbox{ and } k\leq u\}.$$
For every unary, o-slanted map $g : B \to A^\delta$, the $\pi$-{\em extension} of $g$ is the map $g^\pi: B^\delta \to A^\delta$ defined firstly by declaring, for every $o\in O(B^\delta)$,
$$g^\pi(o):= \bigvee\{ g(a)\mid a\in B\mbox{ and } a\leq o\},$$ and then, for every $u\in B^\delta$,
$$g^\pi(u):= \bigwedge\{ g^\pi(o)\mid o\in O(B\delta)\mbox{ and } u\leq o\}.$$
\end{definition}
It immediately follows by denseness and the definition above that, if $e: A\to A^\delta$ is the canonical embedding, then $e^\sigma = e^\pi = id_{A^\delta}$. Likewise, it can be readily verified that, for every (standard)  map $h: B\to A$  which is coordinatewise finitely join-preserving or meet-reversing (resp.~coordinatewise finitely meet-preserving or join-reversing), $(e\cdot h)^\sigma   = h^\sigma$ (resp.~$(e\cdot h)^\pi  = h^\pi$). Conversely, as discussed above, any c-slanted (resp.~o-slanted) map $h: B\to A^\delta$ the range of which is included in $K(A^\delta)\cap O(A^\delta) = e[A]$ gives rise to a normal map $h_{\star}: B\to A$ to which the standard definitions of $\sigma$- and $\pi$-extensions apply, and it can be readily verified that $h^\sigma  = (h_\star)^\sigma$ (resp.~$h^\pi  = (h_\star)^\pi$).

 Let us spell out and further simplify the definition above when $B: = A^\epsilon$ for any order-type $\epsilon$ on $n\geq 1$.
First, recall that taking the order-dual interchanges closed and open elements:
$K({(A^\delta)}^\partial) = O(A^\delta)$ and $O({(A^\delta)}^\partial) =\kbbas$;  similarly, $K({(A^n)}^\delta) =\kbbas^n$, and $O({(A^n)}^\delta) =\obbas^n$. Hence,  $K({(A^\delta)}^\epsilon) =\prod_i K(A^\delta)^{\epsilon(i)}$ and $O({(A^\delta)}^\epsilon) =\prod_i O(A^\delta)^{\epsilon(i)}$ for every LE $\bba$ and every  $\epsilon$, where
\begin{center}
\begin{tabular}{cc}
$K(A^\delta)^{\epsilon(i)}: =\begin{cases}
K(A^\delta) & \mbox{if } \epsilon(i) = 1\\
O(A^\delta) & \mbox{if } \epsilon(i) = \partial\\
\end{cases}
$ &
$O(A^\delta)^{\epsilon(i)}: =\begin{cases}
O(A^\delta) & \mbox{if } \epsilon(i) = 1\\
K(A^\delta) & \mbox{if } \epsilon(i) = \partial.\\
\end{cases}
$\\
\end{tabular}
\end{center}
Letting  $\leq^\epsilon$ denote the product order on $(A^\delta)^\epsilon$, we have for every $f\in \mathcal{F}$, $g\in \mathcal{G}$,  $\overline{k} \in K({(A^\delta)}^{\epsilon_f})$, $\overline{o} \in O({(A^\delta)}^{\epsilon_f})$, $\overline{u}\in (A^\delta)^{n_f}$, and $\overline{v}\in (A^\delta)^{n_g}$,
\begin{center}
\begin{tabular}{l l}
$f^\sigma (\overline{k}):= \bigwedge\{ f( \overline{a})\mid \overline{a}\in A^{\epsilon_f}\mbox{ and } \overline{k}\leq^{\epsilon_f} \overline{a}\}$ & $f^\sigma (\overline{u}):= \bigvee\{ f^\sigma( \overline{k})\mid \overline{k}\in K({(A^\delta)}^{\epsilon_f})\mbox{ and } \overline{k}\leq^{\epsilon_f} \overline{u}\}$ \\
$g^\pi (\overline{o}):= \bigvee\{ g( \overline{a})\mid \overline{a}\in A^{\epsilon_g}\mbox{ and } \overline{a}\leq^{\epsilon_g} \overline{o}\}$ & $g^\pi (\overline{v}):= \bigwedge\{ g^\pi( \overline{o})\mid \overline{o}\in O({(A^\delta)}^{\epsilon_g})\mbox{ and } \overline{v}\leq^{\epsilon_g} \overline{o}\}$. \\
\end{tabular}
%
\end{center}


\begin{lemma}
\label{lemma:basic properties of extensions of slanted}
For every lattice $\mathbb{A}$, any  c-slanted operation $f$ on $\bba$ of arity $n_f$ and order-type $\epsilon_f$,  and any  o-slanted operation $g$ on $\bba$ of arity $n_g$ and order-type $\epsilon_g$,
\begin{enumerate} 
\item $f^\sigma$ is $\epsilon_f$-monotone and $g^\pi$ is $\varepsilon_g$-monotone;
\item $f^\sigma$ is completely join-preserving in all coordinates $i$ such that $\epsilon_f(i) = 1$ and completely meet-reversing in all coordinates $i$ such that $\epsilon_f(i) = \partial$;
 \item $g^\pi$ is completely meet-preserving in all coordinates $i$ such that $\epsilon_g(i) = 1$ and completely join-reversing in all coordinates $i$ such that $\epsilon_g(i) = \partial$.
 \end{enumerate}

\end{lemma}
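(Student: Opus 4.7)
The plan is to reduce to the unary monotone case and then follow the standard proof of \cite[Lemma 4.6]{GH01}, which carries over to the slanted setting essentially verbatim. By the coordinate-wise definitions of $(\cdot)^\sigma$ and $(\cdot)^\pi$ spelled out just above the statement, together with the identifications $K({(\bbas)}^\partial) = O(\bbas)$ and $O({(\bbas)}^\partial) = K(\bbas)$ (which interchange $\sigma$- and $\pi$-extensions, as well as c- and o-slantedness, when the order-type in one coordinate is reversed), it suffices to prove the three claims for a unary, order-preserving, c-slanted map $f : \bba \to \bbas$. The remaining cases for $f^\sigma$ follow by fixing all but one coordinate, and the three claims for $g^\pi$ follow by order-dualising.

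Claim (1) is immediate from the definitions: the map $k \mapsto f^\sigma(k)$ on $K(\bbas)$ and its extension $u \mapsto f^\sigma(u)$ on $\bbas$ are defined, respectively, as an infimum and a supremum taken over sets which shrink and grow monotonically with the argument.

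The bulk of the work is claim (2). Combining the definition of $f^\sigma$ with monotonicity, the nontrivial direction reduces to showing that $f^\sigma(k) \leq \bigvee_{s \in S} f^\sigma(s)$ whenever $S \subseteq \bbas$, $k \in K(\bbas)$, and $k \leq \bigvee S$. The idea is to use denseness to rewrite each $s \in S$ as a directed join of closed elements, each such closed element as a filtered meet of elements of $\bba$, and the relevant values of $f^\sigma$ as meets of values of $f$ on $\bba$; then invoke compactness of $\bba$ in $\bbas$ to extract a finite witness from any inequality of the form $\bigwedge X \leq \bigvee Y$ with $X, Y \subseteq \bba$. Coordinate-wise finite join-preservation of $f$, which is part of the c-slanted hypothesis (Definition \ref{def:slanted LE}), then bounds the relevant $f$-values in $\bbas$ and yields the claim. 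Claim (3) follows from claim (2) by the order-duality noted above.

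The step I expect to require the most care is the compactness argument in claim (2): one must be vigilant that the $K(\bbas)$-valued (rather than $\bba$-valued) nature of $f$ does not interfere with rewriting meets and joins into a form to which the standard compactness of $\bba$ in $\bbas$ applies. Concretely, each $f(a) \in K(\bbas)$ must first be re-expanded as $\bigwedge\{b \in \bba : f(a) \leq b\}$ before compactness is invoked, and similarly for the opens appearing upstairs. This is routine, but it is the one place where the proof deviates notationally from the standard case and so warrants the bulk of the attention in any written-out version.
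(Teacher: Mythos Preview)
Your approach is correct, and the paper explicitly acknowledges that the \cite[Lemma 4.6]{GH01} argument ``can be straightforwardly generalized to the present setting''. However, the paper then takes a different route for items (2) and (3): rather than proving complete join-preservation directly via the restricted distributive law and compactness, it constructs the right residual of $f^\sigma$ in each coordinate explicitly (first on tuples of open elements, then extending by meets) and verifies the adjunction; complete join-preservation then follows immediately from the existence of the adjoint in a complete lattice. The paper's stated reason for preferring this is that it is constructive and avoids the restricted distributive law of \cite[Lemma 3.2]{GH01}, which quantifies over choice functions. Your route is closer to the existing literature and arguably more transparent for a reader already familiar with the standard case; the paper's route buys constructivity and, as a by-product, produces the residuals that are needed later anyway.

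One small correction to your final paragraph: the re-expansion of $f(a)$ as $\bigwedge\{b\in\bba : f(a)\leq b\}$ is not really where the care is needed. The restricted distributive law already applies to down-directed families of \emph{closed} elements, so $f(a)\in K(\bbas)$ is exactly what is required there; and the compactness step in the GH01 argument is applied to an inequality of the form $h\leq\bigvee_k\beta(k)$ with $h$ closed and each $\beta(k)\in\bba$, where $f$-values do not appear. The slanted hypothesis enters precisely in guaranteeing that each $\{f(a):a\in\bba,\ k\leq a\}$ is a down-directed family of closed elements, so that the restricted distributive law is applicable.
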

\begin{proof}
As to item 1, let $u, v\in (\bbas)^{\epsilon_f}$. If $u\leq v$, then by denseness, for every $k\in K((\bbas)^{\epsilon_f}))$, if $k\leq u$ then $k\leq v$. Hence $f^\sigma (u): = \bigvee\lbrace f^\sigma(k)\mid k\in K((\bbas)^{\epsilon_f}) \text{ and } k\leq u\rbrace\leq \bigvee\lbrace f^\sigma(k)\mid k\in K((\bbas)^{\epsilon_f}) \text{ and } k\leq v\rbrace: = f^\sigma (v)$. The proof of the $\epsilon_g$-monotonicity of $g^\pi$ is dual. 

The arguments for  proving the remaining items in the standard setting  (cf.~\cite[Lemma 4.6]{GH01}) can be straightforwardly generalized to the present setting. However, we are going to adopt a simpler method, which is constructive and for which we do not need to appeal to the restricted distributive law. Namely, since $\bbas$ is a complete lattice, it is enough to show that the right residuals  (resp.~Galois residuals) of $f^\sigma$ exist in each coordinate. For the sake of keeping the notation simple, let us show that if $f$ is binary and of order-type $\epsilon_f = \epsilon=(1,\partial)$, the right residual of $f$ in the first coordinate (which needs to be of order-type $(1, 1)$)  exists. Let $g_1 : \bbas\times \bbas \rightarrow \bbas$ be defined as follows: $g_1(o,o') := \bigvee \lbrace a \in A \mid f^{\sigma}(a,o')\leq o \rbrace$ for all $o,o' \in O(\bbas)$ and $g_1(v_1,v_2): = \bigwedge \lbrace g(o_1,o_2)\mid o_i\in \obbas \text{ and }v_i\leq o_i\rbrace$ for all $v_1,v_2 \in \bbas$.\footnote{If $f: \bba^{\epsilon_f}\to \bbas$, then for every $1\leq i\leq n_f$ such that $\epsilon_f(i) = 1$ we let $g_i: (\bbas)^{\epsilon_{g_i}}\to \bbas$ be defined as follows: $g_i(\overline{o}) := \bigvee \lbrace a \in A \mid f^\sigma(\overline{o}[a/o_i]) \leq o_i \rbrace$ for every $\overline{o} \in O(\bbas)^{\epsilon_{g_i}}$ and $g_i(\overline{v}): = \bigwedge \lbrace g_i(\overline{o})\mid o\in {\obbas}^{\epsilon_{g_i}} \text{ and }\overline{v}\leq^{\epsilon_{g_i}} \overline{o}\rbrace$ for any $v\in \bbas$, where $\epsilon_{g_i}(i) = 1$  and $\epsilon_{g_i}(j) = \epsilon_f^\partial(j)$ if $j \neq i$. For every $1\leq i\leq n_f$ such that $\epsilon_f(i) = \partial$ we let $g_i: (\bbas)^{\epsilon_{g_i}}\to \bbas$ be defined as follows: $g_i(\overline{k}) := \bigwedge \lbrace a \in A \mid f^\sigma(\overline{k}[a/o_i]) \leq o_i \rbrace$ for every $\overline{k} \in K(\bbas)^{\epsilon_{g_i}}$ and $g_i(\overline{v}): = \bigvee \lbrace g_i(\overline{k})\mid k\in {\kbbas}^{\epsilon_{g_i}} \text{ and }\overline{k}\leq^{\epsilon_{g_i}} \overline{v} \rbrace$ for any $v\in \bbas$, where $\epsilon_{g_i}(j) = \epsilon_f(j)$ for every $1\leq j\leq n_f$.} Let us show that, for every $k\in K(\bbas) $ and all $o, o'\in \obbas$,
\begin{equation}
\label{eq: adjoint fsigma}
f^\sigma(k,o')\leq o\quad \text{ iff }\quad k\leq g_1(o,o').
\end{equation}
From left to right, if $\bigwedge \lbrace f(a,b) \mid a,b \in A \text{ and }  k \leq a \text{ and }  b \leq o'  \rbrace = : f^\sigma(k,o') \leq o$, then by compactness (recall that $f(a,b)\in \kbbas$) this implies that $f(a_1,b_1) \wedge \cdots \wedge f(a_n,b_n) \leq o$ for some $a_1,\ldots,a_n\in A$ and $b_1,\ldots,b_n\in A$ such that $k \leq a_i$ and  $b_i \leq o'$ for every $1\leq i\leq n$. Since $f$ is $\epsilon$-monotone, letting $b := b_1 \vee \cdots \vee b_n$ and $a := a_1 \wedge \cdots \wedge a_n$, this implies that $k \leq a$, $b \leq o'$ and $f(a,b) \leq f(a_1,b_1) \wedge \cdots \wedge f(a_n,b_n)\leq o$. Hence,  $f^\sigma(a,o'): = \bigwedge \lbrace f(a, b)\mid b \in A \text{ and }   b \leq o' \rbrace \leq f(a, b) \leq o$, and hence $k \leq a \leq \bigvee \lbrace a \in A \mid f^{\sigma}(a,o')\leq o \rbrace= : g_1(o,o')$, as required.

For the converse direction, if $ k \leq g_1(o,o') := \bigvee \lbrace a \in A \mid f^{\sigma}(a,o')\leq o \rbrace,$ then by compactness, $k_1\leq a_1\vee\cdots \vee a_n$ for some $a_1,\ldots, a_n\in A$  such that $\bigwedge \lbrace f(a_i,b) \mid b \in A \text{ and } b \leq o' \rbrace = :f^\sigma(a_i,o') \leq o$ for every $1\leq i\leq n$. Hence, by compactness, for every $1\leq i \leq n $, there exist some $b^1_i,\ldots,b^{n_i}_i \in A$ such that $b^j_i \leq o'$ for every $1\leq j\leq n_i$ and 
\[ f(a_i,b^1_i) \wedge \cdots \wedge f(a_i,b^{n_i}_i) \leq o. \] 
For each $1\leq i\leq n$, let $b_i := b^1_i \vee \cdots \vee b^{n_i}_i$. Hence, $b_i \leq o'$ and, by the antitonicity of $f$ in its second coordinate, $f(a_i,b_i) \leq f(a_i,b^1_i) \wedge \cdots \wedge f(a_i,b^{n_i}_i) \leq o$ for every $1\leq i\leq n$. Hence, letting $b := b_1 \vee \cdots \vee b_n$, and $a := a_1 \vee \cdots \vee a_n$, we have $b \leq o'$ and $k \leq a$, and moreover, 
\begin{center}
\begin{tabular}{r l ll}
&$f^\sigma(k,o')$ \\ $:=$ & $\bigwedge \lbrace f(a,b) \mid a,b \in A \text{ and }  k \leq a \text{ and }  b \leq o'  \rbrace$\\
 $ \leq$ &  $ f(a,b)$\\
 $ =$ & $ f(a_1 \vee \cdots \vee a_n, b)$ & {$a := a_1 \vee \cdots \vee a_n$}\\
 $ = $& $f(a_1,b) \vee \cdots \vee f(a_n,b)$ &{$f$ finitely join preserving in its first coord.}\\
 $ \leq$ &  $ f(a_1,b_1) \vee \cdots \vee f(a_n,b_n)$ & {$b := b_1 \vee \cdots \vee b_n$ and $\epsilon_f(2) = \partial$} \\
 $ \leq$ &  $ o$ & $f(a_i,b_i) \leq o$ for every $1\leq i\leq n$\\ 
\end{tabular}
\end{center}
as required. Let us show that, for all $u, u', v\in \bbas$,
\[f^\sigma(u,u')\leq v \quad \text{ iff }\quad u\leq g_1(v,u').\]
Let $u, u', v\in \bbas$. From left to right,  if $f^\sigma(u,u')\leq v$, to show that $u_1\leq g(v,u'): = \bigwedge \lbrace g(o,o')\mid o,o'\in \obbas \text{ and } v\leq o \text{ and } u'\leq o' \rbrace$ it is enough to show that $k \leq g(o,o')$ for all $k \in K(\bbas)$ such that $k \leq u$ and for all $o,o' \in O(\bbas)$ such that $v\leq o$ and $u' \leq o'$. Since $f^\sigma$ is $\epsilon$-monotone, for any such $k, o$ and $o'$ we have $f^\sigma(k,o') \leq f^\sigma(u,u') \leq v \leq o$. By \eqref{eq: adjoint fsigma}, this implies that $k \leq g_1(o,o')$ as required.  From right to left, if $u\leq g_1(v,u') :  = \bigwedge \lbrace g(o,u')\mid o,o'\in \obbas \text{ and } v\leq o \text{ and } u'\leq o' \rbrace$, to show that $f^\sigma(u,u')\leq v$, we need to show that $f^\sigma(k,o')\leq o$ for every $k\in \kbbas$ s.t.~$k\leq u$  and every $o,o'\in \obbas$ s.t.~$v\leq o$ and $u' \leq o'$. By assumption, $k \leq u \leq g(v,u') \leq g(o,o')$ which, by \eqref{eq: adjoint fsigma}, implies $f^\sigma(k,o') \leq o$, as required.
\end{proof}

As in the standard case (cf.~discussion before Definition \ref{def:canext LE standard}), we define the canonical extension of a slanted $\mathcal{L}_\mathrm{LE}$-algebra so as to meet the desideratum that it be a perfect (resp.~complete, in the constructive setting) {\em standard}  $\mathcal{L}_\mathrm{LE}$-algebra. In the light of the lemma above, we are again justified in choosing the $\sigma$-extensions of connectives in $\mathcal{F}$ and the $\pi$-extensions of connectives in $\mathcal{G}$, which motivates the following
\begin{definition}
\label{def:canext slanted}
The canonical extension of a slanted
$\mathcal{L}_\mathrm{LE}$-algebra $\bbA = (A, \mathcal{F}^\bbA, \mathcal{G}^\bbA)$ is the  $\mathcal{L}_\mathrm{LE}$-algebra
$\bbA^\delta: = (A^\delta, \mathcal{F}^{\bbA^\delta}, \mathcal{G}^{\bbA^\delta})$ such that, for all $f\in \mathcal{F}$ and $g\in \mathcal{G}$, the operations $f^{\bbA^\delta}$ and $g^{\bbA^\delta}$ are defined as the 
$\sigma$-extension of $f^{\bbA}$ and as the $\pi$-extension of $g^{\bbA}$ respectively, as in Definition \ref{def: sigma and pi extensions of slanted}.
\end{definition}
It immediately follows from the definition above and Lemma \ref{lemma:basic properties of extensions of slanted} that the canonical extension of a slanted LE $\bba$ is  a perfect LE  (cf.~Definition \ref{def:perfect LE}) (resp.~complete LE, in the constructive setting) in the standard sense.

Also, from the discussions after Definitions \ref{def:slanted LE} and \ref{def: sigma and pi extensions of slanted}, it readily follows that $(\bba^\star)^\delta = \bba^\delta$ for every standard LE $\bba$, and that  $(\mathbb{S}_\star)^\delta = \mathbb{S}^\delta$ for every slanted LE $\mathbb{S}$ based on a bounded lattice $A$ and such that all  its operations target $K(A^\delta)\cap O(A^\delta) = e[A]$.

\subsection{Slanted LE-algebras as models of LE-inequalities}\label{sec_models}

Fix  an arbitrary LE-signature $(\mathcal{F}, \mathcal{G})$. From the discussion of the previous section, it is clear that, for any slanted $\mathcal{L}_\mathrm{LE}$-algebra $\bba$, any {\em assignment} into $\bba$, i.e.~any map $v:\mathsf{PROP}\to \bba$, uniquely extends to an  $\mathcal{L}_\mathrm{LE}$-homomorphism $v: \mathbf{Fm}\to \bbas$ (abusing notation, the same symbol for the given assignment also denotes its homomorphic extension).  Hence,
\begin{definition}
\label{def:slanted satisfaction and validity}
 An $\mathcal{L}_\mathrm{LE}$-inequality $\phi\leq\psi$ is {\em satisfied} in a slanted $\mathcal{L}_\mathrm{LE}$-algebra $\bba$ under the assignment $v$ (notation: $(\bba, v)\models \phi\leq\psi$) if $(\bbas, e\cdot v)\models \phi\leq\psi$ in the usual sense, where $e\cdot v$ is the assignment on $\bbas$ obtained by composing the canonical embedding $e: \bba\to \bbas$ to the assignment $v:\mathsf{PROP}\to \bba$. 
 
 Moreover, $\phi\leq\psi$ is {\em valid} in $\bba$ (notation: $\bba\models \phi\leq\psi$) if $(\bbas, e\cdot v)\models \phi\leq\psi$ for every assignment $v$ into $\bba$  (notation: $\bbas\models_{\bba} \phi\leq\psi$). We will often refer to assignments  into $\bba$ as {\em admissible} assignments.
\end{definition}
From the definition above, and the discussion after Definition \ref{def:canext slanted}, it immediately follows that any LE-inequality $\phi\leq\psi$ is  valid in $\bba$ iff $\phi\leq\psi$ is  valid in $\bba^\star$ for every standard LE $\bba$, and that $\phi\leq\psi$ is  valid in $\mathbb{S}$ iff  $\phi\leq\psi$ is  valid in $\mathbb{S}_\star$  for every slanted LE $\mathbb{S}$ based on a bounded lattice $A$ and such that all  its operations target $K(A^\delta)\cap O(A^\delta) = e[A]$. This shows that the notion of validity on slanted algebras generalizes standard validity in the appropriate way.

Notice that, whether constructive or non-constructive, the canonical extension of any slanted $\mathcal{L}_\mathrm{LE}$-algebra $\bba$ is an $\mathcal{L}_\mathrm{LE}^+$-algebra in the standard sense. Hence, given the definition above, any slanted $\mathcal{L}_\mathrm{LE}$-algebra is also a {\em slanted} $\mathcal{L}_\mathrm{LE}^+$-algebra, in the sense that the definition above makes the machinery of $\bbas$ available for the interpretation of the language $\mathcal{L}_\mathrm{LE}^+$ on $\bba$ in the sense specified in the definition above. 

Recall that by definition, $f^{\bbas} = (f^{\bba})^\sigma$ for each $f\in \mathcal{F}$, and $g^{\bbas} = (g^{\bba})^\pi$ for each $g\in \mathcal{G}$. 
We are now in a position to define the notion of {\em slanted canonicity} (abbreviated as {\em s-canonicity}) for $\mathcal{L}_\mathrm{LE}$-sequents/inequalities:

\begin{definition}[Slanted canonicity of $\mathcal{L}_\mathrm{LE}$-inequalities]
\label{def:slanted canonicity}
 An $\mathcal{L}_\mathrm{LE}$-inequality $\phi\leq\psi$ is {\em s-canonical} if for every  slanted $\mathcal{L}_\mathrm{LE}$-algebra $\bba$,
 \[ \bbas\models_{\bba} \phi\leq\psi \quad \mbox{ implies }\quad \bbas\models \phi\leq\psi.\]
\end{definition}
From the definition above, and the discussion after Definition \ref{def:slanted satisfaction and validity}, it immediately follows that standard canonicity translates into a {\em relativized} notion of slanted canonicity: namely, slanted canonicity relative to the slanted algebras in the range of the constructor $(-)^\star$. Conversely, slanted canonicity {\em relativized} to the domain of definition of the constructor $(-)_\star$ corresponds to the notion of standard canonicity.


\section{Slanted canonicity of analytic inductive LE-inequalities} 
\label{section:canonicity}

 This section is aimed at showing that every analytic inductive formula is s-canonical (in the sense of Definition \ref{def:slanted canonicity}). We first give the statement of the canonicity theorem and its proof, and subsequently prove the proposition needed in the previously stated proof and its requisite preliminaries.

\begin{theorem}\label{Thm:ALBA:Canonicity}
For any language $\mathcal{L}_\mathrm{LE}$, all analytic inductive $\mathcal{L}_\mathrm{LE}$-inequalities  are s-canonical.
\end{theorem}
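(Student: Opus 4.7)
The plan is to establish s-canonicity by verifying that every step of the ALBA-run on an analytic inductive inequality, as described in Section \ref{Spec:Alg:Section}, is sound under admissible assignments on slanted LE-algebras, and that the resulting pure quasi-inequalities characterize validity on $\bbas$ independently of admissibility. So, given an analytic $(\Omega,\varepsilon)$-inductive inequality $\phi\leq\psi$ and a slanted $\mathcal{L}_{\mathrm{LE}}$-algebra $\bba$, I would fix an admissible assignment $v:\mathsf{PROP}\to\bba$ and show the chain of equivalences
\[
\bbas\models_{\bba}\phi\leq\psi \;\Longleftrightarrow\; \bbas\models_{\bba}\mathsf{ALBA}(\phi\leq\psi) \;\Longleftrightarrow\; \bbas\models\mathsf{ALBA}(\phi\leq\psi) \;\Longleftrightarrow\; \bbas\models\phi\leq\psi.
\]
The first and last biconditionals are the admissible and (respectively) plain correctness of the ALBA-run; the middle one reduces to the observation that $\mathsf{ALBA}(\phi\leq\psi)$ is pure, hence contains no propositional variables that could distinguish admissible from arbitrary assignments — only nominals and co-nominals, whose sorted range is $\jty(\bbas)$ and $\mty(\bbas)$ in either reading.

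For the admissible correctness, I would process the three stages of ALBA in order. Stage 1 rewriting (elimination of uniform variables, distribution inside Skeleton and PIA subformulas, splitting) is justified by lattice identities plus the coordinatewise $\sigma$-complete join-preservation / $\pi$-complete meet-preservation of $f^{\bbas}$ and $g^{\bbas}$ guaranteed by Lemma \ref{lemma:basic properties of extensions of slanted}. The initialization step that introduces nominals $\overline{\nomj}$, $\overline{\nomi}$ below the positive PIA/maximal $\varepsilon^{\partial}$-subformulas $\overline{\alpha},\overline{\gamma}$ and co-nominals $\overline{\cnomm}$, $\overline{\cnomn}$ above the negative ones $\overline{\beta},\overline{\delta}$ is sound because, under admissible $v$, each such subformula evaluates to an element of $\bbas$ that is closed (when it is the target of a nominal) or open (when it is the target of a co-nominal); this is the content of the slanted analogue of the syntactic closed/open lemma (Lemma \ref{Syn:Shape:Lemma}), whose proof proceeds by induction on the generation tree and uses that $f^{\bba}$ and $g^{\bba}$ already land in $K(\bbas)$ and $O(\bbas)$, while coordinatewise meet-/join-preservation and order-theoretic compatibility of $\wedge,\vee$ and of Skeleton PIA nodes preserve closedness/openness. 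Stage 2's residuation rules are admissibly sound via the adjunction/residuation of $f^{\sigma}$ and $g^{\pi}$ established in Lemma \ref{lemma:basic properties of extensions of slanted}. Stage 2's Ackermann eliminations are sound because the minimal/maximal valuations $\bigvee\mathsf{Mv}(p)$ and $\bigwedge\mathsf{Mv}(q)$, although not a priori in $\bba$, appear only inside the positive/negative PIA-subformulas $\overline{\gamma},\overline{\delta}$ that are strictly syntactically open/closed, so by the syntactic closed/open lemma their evaluation is a join of closeds / meet of opens that can be broken up by splitting against a single nominal or co-nominal, yielding the inequalities of the form (\ref{last eq}); this is the sole place where the analyticity assumption (and not mere inductivity) is genuinely exploited.

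For the plain correctness of the ALBA-run (equivalence under arbitrary, not just admissible, assignments), the argument is the standard one from \cite{CoPa-nondist,CoPa-constructive}, carried out inside the perfect standard LE $\bbas$ and therefore entirely insensitive to whether the original algebra $\bba$ was slanted or standard. Combining the three equivalences gives the theorem.

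The main obstacle is the syntactic closed/open lemma in the slanted setting: one must check that, under admissible $v$, every positive PIA-formula (and more generally every strictly syntactically closed formula in the sense of Definition \ref{strictlySyn:Opn:Clsd:Definition}) still evaluates to a closed element of $\bbas$, and dually for open, even though the basic operations $f^{\bba},g^{\bba}$ now produce only closed/open (rather than clopen) elements. The induction goes through because $K(\bbas)$ is closed under finite joins and arbitrary meets, $O(\bbas)$ is closed under finite meets and arbitrary joins, and $f^{\bbas}$ (resp.~$g^{\bbas}$) sends tuples of closeds (resp.~opens, read with $\varepsilon_{f}$ / $\varepsilon_{g}$) to closeds (resp.~opens) by the very definition of the $\sigma$- and $\pi$-extensions of slanted maps; but verifying this cleanly for the full class of analytic inductive formulas — including the behaviour of the residuals $f^{\sharp}_{i},g^{\flat}_{i}$ introduced in Stage 2 — is where the bulk of the bookkeeping lives, and is precisely what the technical lemmas collected in Section \ref{appendix} are designed to discharge.
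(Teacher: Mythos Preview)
Your proposal is correct and follows the paper's U-shaped argument essentially verbatim. One small correction: the initialization step is sound in any perfect LE purely by join-/meet-generation of $\bbas$ together with the complete preservation properties of the Skeleton, and does not rely on closed/open properties of the approximated subformulas; Lemma~\ref{Syn:Shape:Lemma} instead certifies the ssc/sso shape of the non-pure inequalities at each Ackermann application, which via Lemmas~\ref{Ackermann:Dscrptv:Right:Lemma} and~\ref{Ackermann:Dscrptv:Left:Lemma} is where analyticity (and the slanted-specific strengthening from syntactically closed/open to \emph{strictly} so) genuinely enters.
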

\begin{proof}
Let $\phi \leq \psi$ be an analytic inductive $\mathcal{L}_\mathrm{LE}$-inequality,
fix a slanted $\mathcal{L}_\mathrm{LE}$-algebra  $\bba$, and let $\bbas$ be its canonical extension. As discussed in Section \ref{Spec:Alg:Section}, ALBA succeeds in reducing $\phi \leq \psi$ to a set $\mathsf{ALBA}(\phi \leq \psi)$ of pure quasi-inequalities in the expanded language $\mathcal{L}^+_\mathrm{LE}$. 
The required canonicity proof is summarized in the following U-shaped diagram:\\
\begin{center}
\begin{tabular}{l c l}
$\bba \ \ \models \phi \leq \psi$ & &$\bbas \models \phi \leq \psi$\\
$\ \ \ \ \ \ \ \ \ \ \ \Updownarrow$ \\
$\bbas \models_{\bba} \phi \leq \psi$ & & \ \ \ \ \ \ \ \ \ \ \  $\Updownarrow $\\
$\ \ \ \ \ \ \ \ \ \ \ \Updownarrow$\\
$\bbas \models_{\bba} \mathsf{ALBA}(\phi \leq \psi)$
&\ \ \ $\Leftrightarrow$ \ \ \ &$\bbas \models \mathsf{ALBA}(\phi \leq \psi)$
\end{tabular}
\end{center}
The upper bi-implication on the left is due to the definition of validity on slanted LEs (cf.~Definition \ref{def:slanted satisfaction and validity}).
The lower bi-implication on the left is given by Proposition \ref{Propo:Crtns:CanExtns} below. The horizontal bi-implication follows from the facts that, by assumption, $\mathsf{ALBA}(\phi \leq \psi)$ is pure, and that, when restricted to pure formulas, the ranges of admissible and arbitrary assignments coincide. The bi-implication on the right is due to \cite[Theorem 6.1]{CoPa-nondist} (which can be applied since the canonical extension of a slanted LE is a standard LE).
\end{proof}
Towards the proof of  Proposition \ref{Propo:Crtns:CanExtns}, the following definitions and lemmas will be useful:

\begin{definition}\label{Syn:Opn:Clsd:Definition}
The sets $\mathsf{SC}$ and $\mathsf{SO}$ of \emph{syntactically closed}  and \emph{syntactically open}  $\mathcal{L}_\mathrm{LE}^{+}$-terms are defined simultaneously as follows: for every $f^\ast \in \mathcal{F}^\ast$, $f \in \mathcal{F}$, $g^\ast \in \mathcal{G}^\ast$, and $g \in \mathcal{G}$,
\begin{align*}
\mathsf{SC} \ni \varphi ::&= p \mid \nomj \mid \top \mid \bot \mid  \varphi \vee \varphi \mid \phi \wedge \phi \mid f^\ast(\overline{\varphi}, \overline{\psi}) \mid g(\overline{\varphi}, \overline{\psi}) \\
\mathsf{SO} \ni \psi ::&= p \mid \cnomm \mid \top \mid \bot \mid  \psi \vee \psi \mid \psi \wedge \psi \mid g^\ast(\overline{\psi}, \overline{\varphi}) \mid f(\overline{\psi}, \overline{\varphi}).
\end{align*}
Recall that, when writing $h(\overline{\chi}, \overline{\xi})$, we let $\overline{\chi}$ represent all the coordinates of $h$ such that $\epsilon_{h}(i) = 1$ and $\overline{\xi}$ represent all the coordinates of $h$ such that $\epsilon_{h}(i) = \partial$.
\end{definition}
The previous definition identifies the syntactic shape of the terms, the (formal) topological\footnote{This denomination stems from the original definition of canonical extensions as algebraic structures encoding information about Stone duals of Boolean algebras  \cite{jonsson1951boolean}.} properties of which guarantee the soundness of the Ackermann rules under admissible assignments in the setting of standard (i.e.~non slanted) LEs. The  following definition identifies a more restricted syntactic shape of LE-terms  which aims at guaranteeing the soundness of the Ackermann rules under admissible assignments in the setting of  {\em slanted} LEs;  this  restriction consists in imposing the same constraints both to  the connectives of the original language and to  those of the expanded language.
\begin{definition}\label{strictlySyn:Opn:Clsd:Definition}
The sets $\mathsf{SSC}$ and $\mathsf{SSO}$ of \emph{strictly syntactically closed} (ssc) and \emph{strictly syntactically open} (sso)  $\mathcal{L}_\mathrm{LE}^{+}$-terms are defined simultaneously as follows: for every $f^\ast \in \mathcal{F}^\ast$, and $g^\ast \in \mathcal{G}^\ast$,
\begin{align*}
\mathsf{SSC} \ni \varphi ::&= p \mid \nomj \mid \top \mid \bot \mid  \varphi \vee \varphi \mid \phi \wedge \phi \mid f^\ast(\overline{\varphi}, \overline{\psi}),  \\
\mathsf{SSO} \ni \psi ::&= p \mid \cnomm \mid \top \mid \bot \mid  \psi \vee \psi \mid \psi \wedge \psi \mid g^\ast(\overline{\psi}, \overline{\varphi}).
\end{align*}
\end{definition}

From the definition above, it immediately follows that 
\begin{lemma}
\label{lemma:composition of ssc-sso}
For all ssc formulas $\phi(\overline{!x}, \overline{!y})$ and all sso formulas $\psi(\overline{!x}, \overline{!y})$ which are positive in  any $x$ in $\overline{!x}$ and negative in any $y$ in $\overline{!y}$, and all tuples $\overline{\phi'}$ and $\overline{\psi'}$ of ssc formulas and sso formulas respectively,
\begin{enumerate}
\item $\phi[\overline{\phi'}/\overline{!x}, \overline{\psi'}/\overline{!y}]$ is ssc;
\item $\psi[\overline{\psi'}/\overline{!x}, \overline{\phi'}/\overline{!y}]$ is sso.
\end{enumerate}
\end{lemma}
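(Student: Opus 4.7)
The plan is to proceed by simultaneous structural induction on $\phi$ and $\psi$. The base cases are routine: if $\phi$ is a constant ($\top$, $\bot$) or a nominal $\nomj$, no substitution takes place and the result is ssc by Definition \ref{strictlySyn:Opn:Clsd:Definition}. If $\phi = p$ with $p \notin \overline{x}\cup\overline{y}$, again no substitution occurs. If $\phi = x_i$ for some $i$, the substitution yields the ssc formula $\phi'_i$. The case $\phi = y_j$ cannot arise, since $y_j$ would then be its unique, necessarily positive occurrence in $\phi$, contradicting the assumption that $\phi$ is negative in $y_j$. The base cases for $\psi$ are dual.

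For the inductive step, the cases $\phi = \phi_1\vee\phi_2$ and $\phi = \phi_1\wedge\phi_2$ are immediate: since $\wedge$ and $\vee$ are monotone in both coordinates, each occurrence of a variable in $\phi$ is inherited by some $\phi_k$ with the same polarity; by the induction hypothesis, each substituted $\phi_k$ is ssc, and the class of ssc formulas is closed under $\vee$ and $\wedge$ by Definition \ref{strictlySyn:Opn:Clsd:Definition}.

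The crucial case is $\phi = f^\ast(\overline{\chi},\overline{\xi})$, with $\overline{\chi}$ a tuple of ssc subformulas occupying the coordinates $i$ with $\epsilon_{f^\ast}(i)=1$ and $\overline{\xi}$ a tuple of sso subformulas in the coordinates with $\epsilon_{f^\ast}(i)=\partial$. The signed generation tree $+\phi$ preserves signs in positive coordinates and flips them in negative ones; hence any $+x_i$ in $+\phi$ descends either to a positive occurrence in some $\chi\in\overline{\chi}$ or to a negative occurrence in some $\xi\in\overline{\xi}$, and dually for $-y_j$. Consequently each $\chi\in\overline{\chi}$ satisfies the hypothesis of item (1) (restricted to the variables it actually contains), so by the induction hypothesis $\chi[\overline{\phi'}/\overline{x},\overline{\psi'}/\overline{y}]$ is ssc; each $\xi\in\overline{\xi}$ satisfies the hypothesis of item (2) with the roles of $\overline{x}$ and $\overline{y}$ swapped (positive occurrences now being of $y$'s, negative of $x$'s), so by the induction hypothesis $\xi[\overline{\phi'}/\overline{x},\overline{\psi'}/\overline{y}]$ is sso. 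Reassembling via $f^\ast$ then produces an ssc formula, as required. Item (2) is handled dually, using $g^\ast$ in place of $f^\ast$.

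The main subtlety, and the reason the two items must be proved simultaneously, is precisely this crossing at $f^\ast$- and $g^\ast$-nodes between positive and negative coordinates: it forces the ssc claim for $\phi$ to invoke the sso claim for its sso subformulas $\xi$, and vice versa. Once the polarity bookkeeping on signed generation trees is carried out carefully, the closure conditions built into Definition \ref{strictlySyn:Opn:Clsd:Definition} deliver the result without further work.
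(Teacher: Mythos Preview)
Your proof is correct. The paper itself omits the argument entirely, declaring the lemma an immediate consequence of Definition~\ref{strictlySyn:Opn:Clsd:Definition}; your simultaneous structural induction is precisely the routine verification the paper leaves implicit, and your polarity bookkeeping at $f^\ast$- and $g^\ast$-nodes is carried out correctly.
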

\begin{lemma}
\label{lemma:adjoints and ssc-sso}
If $\alpha(!x)$ is a definite positive PIA $\mathcal{L}_\mathrm{LE}$-formula and $\beta(!x)$ is a definite negative PIA $\mathcal{L}_\mathrm{LE}$-formula, then \begin{enumerate}
\item $\alpha$ is sso and $\beta$ is ssc.
\item If $+x\prec +\alpha$ and $+x\prec +\beta$, then $\mathsf{LA}(\alpha)[\nomj/!u]$ is ssc and $\mathsf{RA}(\beta)[\cnomm/!u]$ is sso. 
\item If $-x\prec +\alpha$ and $-x\prec +\beta$, then $\mathsf{LA}(\alpha)[\nomj/!u]$ is sso and $\mathsf{RA}(\beta)[\cnomm/!u]$ is ssc. 
\end{enumerate}
\end{lemma}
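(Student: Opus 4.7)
I would argue by simultaneous induction on the complexity of $\alpha$ and $\beta$. The base case is immediate, since every variable is both ssc and sso. For the inductive step, since $\alpha$ is definite positive PIA and no $+\wedge$ or $-\vee$ nodes appear in $+\alpha$, Table \ref{Join:and:Meet:Friendly:Table} forces $\alpha = g(\overline{\phi}, \overline{\psi})$ for some $g \in \mathcal{G}\subseteq \mathcal{G}^\ast$, with each $\phi_i$ definite positive PIA at coordinates where $\varepsilon_g(i) = 1$ and each $\psi_i$ definite negative PIA at coordinates where $\varepsilon_g(i) = \partial$. By the induction hypothesis the $\phi_i$s are sso and the $\psi_i$s are ssc, which matches exactly the sso clause $g^\ast(\overline{\psi}, \overline{\varphi})$ of Definition \ref{strictlySyn:Opn:Clsd:Definition}, so $\alpha$ is sso. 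The argument for $\beta$ is entirely dual, using $f \in \mathcal{F} \subseteq \mathcal{F}^\ast$.

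\textbf{Parts 2 and 3.} A direct induction on the statements as given is not strong enough: the inductive step substitutes into $\mathsf{LA}(\phi_j)$ or $\mathsf{RA}(\psi_h)$ supplied by the hypothesis, and to apply Lemma \ref{lemma:composition of ssc-sso} one must know the polarity of the hole $u$. My plan is to prove by simultaneous induction on the recursive definitions in Definition \ref{def: RA and LA} the following strengthened claim:
\begin{itemize}
\item if $+x \prec +\alpha$ then $\mathsf{LA}(\alpha)(u, \oz)$ is ssc and monotone in $u$, while if $-x \prec +\alpha$ then $\mathsf{LA}(\alpha)(u, \oz)$ is sso and antitone in $u$;
\item if $+x \prec +\beta$ then $\mathsf{RA}(\beta)(u, \oz)$ is sso and monotone in $u$, while if $-x \prec +\beta$ then $\mathsf{RA}(\beta)(u, \oz)$ is ssc and antitone in $u$.
\end{itemize}
The polarity components of this claim reproduce Lemma \ref{lemma:polarities of la-ra}. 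Once the strengthened claim is established, Parts 2 and 3 follow by observing that $\nomj$ is ssc and $\cnomm$ is sso, and invoking Lemma \ref{lemma:composition of ssc-sso} to perform the substitutions at the $u$-hole of compatible polarity.

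As a representative inductive case, consider $\alpha = g(\overline{\phi_{-j}}, \phi_j(x,\oz), \overline{\psi})$ with $\varepsilon_g(j) = 1$, so that $\mathsf{LA}(\alpha) = \mathsf{LA}(\phi_j)[g^\flat_j(\overline{\phi_{-j}},u,\overline{\psi})/u']$. Since $\varepsilon_g(j) = 1$, we have $g^\flat_j \in \mathcal{F}^\ast$ with $\varepsilon_{g^\flat_j}(j) = 1$ and $\varepsilon_{g^\flat_j}(i) = \varepsilon_g^\partial(i)$ for $i \neq j$. Combining this with Part 1 (so the $\phi_{-j}$s are sso and the $\psi$s are ssc), the inner expression $g^\flat_j(\overline{\phi_{-j}},u,\overline{\psi})$ fits the ssc clause for $f^\ast$ and is monotone in $u$. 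The path from $+\alpha$ to the leaf $\pm x$ enters $+\phi_j$ without sign change, so $\pm x \prec +\alpha$ iff $\pm x \prec +\phi_j$, and the inductive hypothesis on $\phi_j$ gives the desired class and polarity of $\mathsf{LA}(\phi_j)(u', \oz)$; Lemma \ref{lemma:composition of ssc-sso} then transfers this to $\mathsf{LA}(\alpha)$. The dual case $\alpha = g(\overline{\phi}, \overline{\psi_{-h}}, \psi_h(x,\oz))$ with $\varepsilon_g(h) = \partial$ works analogously but uses the single sign flip at coordinate $h$: the assumption $\pm x \prec +\alpha$ translates into $\mp x \prec +\psi_h$, so the inductive hypothesis is applied to $\mathsf{RA}(\psi_h)$ with swapped polarity, and the inner expression $g^\flat_h(\overline{\phi},\overline{\psi_{-h}},u) \in \mathcal{G}^\ast$ turns out to be sso and antitone in $u$, which composes to yield the expected conclusion. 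The two cases for $\beta$ are entirely symmetric, with the roles of $\mathsf{LA}/\mathsf{RA}$, ssc/sso, and $\mathcal{F}/\mathcal{G}$ interchanged.

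The sole difficulty is the bookkeeping: at each recursive step, four parallel alternations (between $\mathsf{LA}$ and $\mathsf{RA}$, positive and negative PIA subformulas, ssc and sso syntax, and monotone and antitone polarity in $u$) must propagate in a mutually consistent way. The order-type conventions for $f^\sharp_j$ and $g^\flat_j$ recalled in Section \ref{ssec:expanded tense language} are calibrated precisely so as to keep the strengthened invariant self-reproducing, and no step of the induction requires anything beyond Part 1 and Lemma \ref{lemma:composition of ssc-sso}.
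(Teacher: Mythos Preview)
Your proposal is correct and follows essentially the same route as the paper's proof: simultaneous induction on $\alpha$ and $\beta$, using Part 1 for the side-subformulas, the order-type conventions for $g^\flat_j$ and $f^\sharp_i$ to verify that the inner residual term has the right ssc/sso shape, and Lemma \ref{lemma:composition of ssc-sso} to propagate. The only presentational difference is that you fold the polarity-in-$u$ information into a strengthened induction hypothesis on the unsubstituted terms $\mathsf{LA}(\alpha)(u,\oz)$ and $\mathsf{RA}(\beta)(u,\oz)$, whereas the paper keeps the statement about the substituted terms $\mathsf{LA}(\alpha)[\nomj/!u]$ and $\mathsf{RA}(\beta)[\cnomm/!u]$ and appeals to Lemma \ref{lemma:polarities of la-ra} externally for the polarity; both packagings amount to the same verification.
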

\begin{proof} 
1. Straightforward by simultaneous induction on $\alpha$ and $\beta$.

2. and 3. We proceed by simultaneous induction on $\alpha$ and $\beta$.

If $\alpha = \beta = x$, then the assumptions of item 2 are satisfied; then $\mathsf{LA}(\alpha)[\nomj/!u] = \nomj/u$ is clearly ssc and $\mathsf{RA}(\beta)[\cnomm/!u] = \cnomm/u$ is clearly sso. 

As to the inductive step, if $\alpha = g(\overline{\phi}, \overline{\psi})$, with each $\phi$ in $\overline{\phi}$ positive PIA (hence, by item 1, sso) and  each $\psi$ in $\overline{\psi}$ negative PIA (hence, by item 1, ssc), and the only occurrence of $x$ is in  ${\phi}_h$, then $\phi_h$ is positive PIA, and moreover, $g_h^\flat\in \mathcal{F}^\ast$ is positive in its $h$th coordinate and has the opposite polarity of $\epsilon_g$ in all the other coordinates. Hence, $g_h^\flat(\overline{\phi_{-h}},\nomj/!u,\overline{\psi})$ is ssc. Two cases can occur: (a) if $+x\prec +\alpha$, then $+x\prec +\phi_h$, hence by induction hypothesis, $\mathsf{LA}(\phi_h)[\nomi/!u']$ is ssc, and moreover, $+u'\prec \mathsf{LA}(\phi_h)(u')$ (cf.~Lemma \ref{lemma:polarities of la-ra}). Hence,
\[ \mathsf{LA}(\alpha)[\nomj/!u] = \mathsf{LA}(\phi_h)[g_h^\flat(\overline{\phi_{-h}},\nomj/!u,\overline{\psi})/!u'] \]
is ssc (cf.~Lemma \ref{lemma:composition of ssc-sso}).
(b) if $-x\prec +\alpha$, then $-x\prec +\phi_h$, hence by induction hypothesis, $\mathsf{LA}(\phi_h)[\nomi/!u']$ is sso, and moreover, $-u'\prec \mathsf{LA}(\phi_h)(u')$ (cf.~Lemma \ref{lemma:polarities of la-ra}). Hence,
\[ \mathsf{LA}(\alpha)[\nomj/!u] = \mathsf{LA}(\phi_h)[g_h^\flat(\overline{\phi_{-h}},\nomj/!u,\overline{\psi})/!u'] \]
is sso (cf.~Lemma \ref{lemma:composition of ssc-sso}).
The remaining cases are $\alpha=g(\overline{\phi}, \overline{\psi})$ such that the only occurrence of $x$ is in $\psi_h$, $\beta = f(\overline{\phi}, \overline{\psi})$ with $x$ occurring in $\phi_h$ or $\psi_h$, and are shown in a similar way.
\end{proof}

In the following two lemmas, $\alpha$, $\beta_1,\ldots,\beta_n$ and $\gamma_1,\ldots,\gamma_n$ are $\mathcal{L}_\mathrm{LE}^{+}$-terms. We work under the assumption that the values of all parameters occurring in them (propositional variables, nominals and conominals) are given by some fixed admissible assignment. 
Recall that every slanted $\mathcal{L}_\mathrm{LE}$-algebra is also an slanted $\mathcal{L}_\mathrm{LE}^+$-algebra (cf.~discussion after Definition \ref{def:slanted satisfaction and validity}).
\begin{lemma}[Righthanded Ackermann lemma for admissible assignments]\label{Ackermann:Dscrptv:Right:Lemma}
Let $\alpha$ be ssc, $p \not \in \mathsf{PROP}(\alpha)$, let $\beta_{1}(p), \ldots, \beta_{n}(p)$ be ssc and positive in $p$, and let $\gamma_{1}(p), \ldots, \gamma_{n}(p)$ be sso and negative in $p$.  Then, for every slanted $\mathcal{L}_\mathrm{LE}$-algebra $\bba$ and every admissible assignment $v$ into $\bba$, 
\[
(\bba, v) \models {\beta_i}(\alpha/p) \leq {\gamma_i}(\alpha/p) \textrm{ for all } 1 \leq i \leq n
\]
iff there exists some $p$-variant $v'$ of $v$  into $\bba$ such that
\[
(\bba, v') \models \alpha \leq p \textrm{ and } (\bba, v') \models {\beta_i}(p) \leq {\gamma_i}(p) \textrm{ for all } 1 \leq i \leq n.
\]
\end{lemma}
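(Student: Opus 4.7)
The plan is to prove the two directions of the biconditional separately. The right-to-left direction is a routine monotonicity argument, whereas the left-to-right direction requires replacing the generally-closed value $V(\alpha)\in\kbbas$ by an admissible approximation $c\in\bba$ lying above it, via compactness and the continuity of the interpretations of ssc/sso formulas on canonical extensions of slanted LEs.

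For the right-to-left direction, I would extend $v'$ to the $\mathcal{L}_\mathrm{LE}^+$-homomorphism $V':\mathbf{Fm}\to\bbas$ via the canonical embedding. Since $\beta_i$ is positive in $p$ and $\gamma_i$ is negative in $p$, the hypotheses $V'(\alpha)\leq V'(p)$ and $V'(\beta_i(p))\leq V'(\gamma_i(p))$ yield the chain
\[ V'(\beta_i(\alpha/p))\;\leq\; V'(\beta_i(p))\;\leq\; V'(\gamma_i(p))\;\leq\; V'(\gamma_i(\alpha/p)). \]
Since $p\notin\mathsf{PROP}(\alpha)$, $v$ and $v'$ agree on every variable occurring in $\beta_i(\alpha/p)$ and $\gamma_i(\alpha/p)$, so this chain transports back to $V$ and yields $(\bba,v)\models\beta_i(\alpha/p)\leq\gamma_i(\alpha/p)$ for every $i$.

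For the left-to-right direction, let $V$ extend $v$. By Lemma \ref{Syn:Shape:Lemma}, $V^\alpha:=V(\alpha)\in\kbbas$; consequently $F:=\{a\in\bba\mid V^\alpha\leq a\}$ is a filter of $\bba$ with $V^\alpha=\bigwedge F$. A simultaneous structural induction on $\beta_i$ and $\gamma_i$, rooted in Lemma \ref{lemma:basic properties of extensions of slanted} and the definition of $\sigma$- and $\pi$-extensions on closed/open arguments, yields the distribution identities
\[ V(\beta_i(\alpha/p))\;=\;\bigwedge_{a\in F}V[a/p](\beta_i(p)),\qquad V(\gamma_i(\alpha/p))\;=\;\bigvee_{a\in F}V[a/p](\gamma_i(p)). \]
Moreover, for every $a\in F$ the assignment $v[a/p]$ is admissible, so by Lemma \ref{Syn:Shape:Lemma} each $V[a/p](\beta_i(p))$ lies in $\kbbas$ and each $V[a/p](\gamma_i(p))$ in $\obbas$. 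The hypothesis $V(\beta_i(\alpha/p))\leq V(\gamma_i(\alpha/p))$ therefore rewrites as an inequality between a down-directed meet of closed elements and an up-directed join of open elements indexed by $F$; compactness of $\bbas$ then yields, for each $i$, finite $a^{(i)}_1,\ldots,a^{(i)}_{m_i},b^{(i)}_1,\ldots,b^{(i)}_{n_i}\in F$ such that the corresponding finite meet on the left is bounded above by the corresponding finite join on the right. Setting $c:=\bigwedge_i\bigl(\bigwedge_j a^{(i)}_j\wedge\bigwedge_k b^{(i)}_k\bigr)\in F$ (which exists since $F$ is closed under finite meets), the monotonicity of $\beta_i$ and the antitonicity of $\gamma_i$ in $p$ yield $V[c/p](\beta_i(p))\leq V[c/p](\gamma_i(p))$ for every $i$. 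Finally, defining $v'(p):=c$ and $v'(q):=v(q)$ for $q\neq p$ provides the required admissible $p$-variant of $v$.

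The main obstacle is the structural induction establishing the distribution identities above in the slanted setting. This is exactly where the restriction from SC/SO (Definition \ref{Syn:Opn:Clsd:Definition}) to SSC/SSO (Definition \ref{strictlySyn:Opn:Clsd:Definition}) becomes essential: unlike in the standard case, the primitive connectives $f\in\mathcal{F}$ and $g\in\mathcal{G}$ now have range in $\kbbas$ and $\obbas$ respectively rather than in $\bba$, so an expression of the form $g(\overline{\varphi},\overline{\psi})$ is no longer guaranteed to be interpreted as a closed element. Excluding $\mathcal{G}$ from the ssc grammar (and dually $\mathcal{F}$ from sso) and permitting only residuals in $\mathcal{F}^\ast,\mathcal{G}^\ast$ in the non-lattice clauses ensures, via Lemma \ref{lemma:basic properties of extensions of slanted}, that the $\sigma$- and $\pi$-extensions invoked in the induction enjoy precisely the complete join- and meet-preservation properties needed to propagate both the syntactic shape and the distribution identities from the base case upward.
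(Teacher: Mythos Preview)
Your overall strategy is the paper's: the right-to-left direction is monotonicity, and left-to-right approximates the closed value $V(\alpha)$ from above by clopens, distributes the resulting down-directed meet through $\beta_i$ and $\gamma_i$, and invokes compactness. Two of your lemma citations are off, however, and the second one hides a genuine gap.

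First, Lemma~\ref{Syn:Shape:Lemma} concerns the syntactic shape of ALBA outputs, not topological behaviour; the fact that $V(\alpha)\in\kbbas$ and that $V[a/p](\beta_i(p))\in\kbbas$, $V[a/p](\gamma_i(p))\in\obbas$ is Lemma~\ref{Syn:Opn:Clsd:Appld:ClsdUp:Lemma}.

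Second, and more substantively, the distribution identities do not follow from Lemma~\ref{lemma:basic properties of extensions of slanted}. That lemma tells you $f^\sigma$ is completely \emph{join}-preserving in its positive coordinates, but in the inductive step for an ssc term $f^\ast(\overline{\phi},\overline{\psi})$ you must push a down-directed \emph{meet} of closed elements through the positive coordinates of $f^\ast$ (and dually an up-directed join of open elements through the negative ones)---the ``uncongenial'' direction. This requires the Scott-type continuity of Lemmas~\ref{lemma:uncongenial for the whites} and~\ref{lemma: uncongenial for the blacks} (packaged as Lemma~\ref{Esakia:Syn:Clsd:Opn:Lemma} in the paper), together with the restricted distributivity law of \cite[Lemma~3.2]{GH01} for the $\vee$ case. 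The role of the ssc/sso restriction, which you correctly identify at the level of topological types, is precisely to keep the inductive hypothesis (inputs closed/open, outputs closed/open) alive so that these continuity lemmas remain applicable at every node; it does not make the complete-preservation properties of Lemma~\ref{lemma:basic properties of extensions of slanted} relevant to this step.
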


\begin{lemma}[Lefthanded Ackermann lemma for admissible assignments]\label{Ackermann:Dscrptv:Left:Lemma}
Let $\alpha$ be sso, $p \not \in \mathsf{PROP}(\alpha)$, let $\beta_{1}(p), \ldots, \beta_{n}(p)$ be ssc and negative in $p$, and let $\gamma_{1}(p), \ldots, \gamma_{n}(p)$ be sso and positive in $p$.  Then, for every slanted $\mathcal{L}_\mathrm{LE}$-algebra $\bba$ and every admissible assignment $v$ into $\bba$, 
\[
(\bba, v) \models {\beta_i}(\alpha/p) \leq {\gamma_i}(\alpha/p) \textrm{ for all } 1 \leq i \leq n
\]
iff there exists some admissible $p$-variant $v'$ of $v$  into $\bba$ such that
\[
(\bba, v') \models p \leq \alpha \textrm{ and }  (\bba, v') \models {\beta_i}(p) \leq {\gamma_i}(p) \textrm{ for all } 1 \leq i \leq n.
\]
\end{lemma}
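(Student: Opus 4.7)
The statement is the order-dual of Lemma \ref{Ackermann:Dscrptv:Right:Lemma}, so my plan is to follow the proof of the Righthanded Ackermann lemma, with the roles of $\mathsf{SSC}/\mathsf{SSO}$ swapped, the polarities of $\beta_i$ and $\gamma_i$ in $p$ reversed, and the auxiliary inequality $\alpha \leq p$ flipped to $p \leq \alpha$. I would handle the two directions of the biconditional separately, pointwise over $1 \leq i \leq n$.

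For the ``if'' direction, fix an admissible $p$-variant $v'$ witnessing the right-hand side. Since $p \notin \mathsf{PROP}(\alpha)$, we have $v'(\alpha) = v(\alpha)$, and hence $v'(p) \leq v(\alpha)$ in $\bba^\delta$. By the coordinatewise $\varepsilon$-monotonicity of $\sigma$- and $\pi$-extensions (Lemma \ref{lemma:basic properties of extensions of slanted}), propagated through term composition, the map $\beta_i^{\bba^\delta}$ is antitone and $\gamma_i^{\bba^\delta}$ is monotone in the slot of $p$. Chaining these monotonicities with the hypothesis $v'(\beta_i(p)) \leq v'(\gamma_i(p))$ yields
\[
v(\beta_i(\alpha/p)) = \beta_i^{\bba^\delta}(v(\alpha)) \leq \beta_i^{\bba^\delta}(v'(p)) = v'(\beta_i(p)) \leq v'(\gamma_i(p)) = \gamma_i^{\bba^\delta}(v'(p)) \leq \gamma_i^{\bba^\delta}(v(\alpha)) = v(\gamma_i(\alpha/p)),
\]
which is the required satisfaction in $(\bba, v)$.

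For the ``only if'' direction I would take the $p$-variant $v'$ of $v$ defined by $v'(p) := v(\alpha)$. Then $v'(\alpha) = v(\alpha) = v'(p)$, so $(\bba, v') \models p \leq \alpha$ is trivial, and since $v$ and $v'$ agree off $p$ we have $v'(\beta_i(p)) = v(\beta_i(\alpha/p))$ and $v'(\gamma_i(p)) = v(\gamma_i(\alpha/p))$, so the hypothesised inequalities in $v$ transfer verbatim to $v'$.

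The hard part will be verifying that this $v'$ is genuinely \emph{admissible} in the strict sense of Section \ref{sec_models} -- that is, that $v(\alpha) \in \bba$ -- since a priori $v(\alpha)$ lives in $\bba^\delta$, for instance whenever $\alpha$ contains a co-nominal or a slanted operation from $\mathcal{G}$ whose $\pi$-extension targets $\obbas \setminus \bba$. This is precisely where the strictness of the $\mathsf{SSC}/\mathsf{SSO}$ classes (as against $\mathsf{SC}/\mathsf{SO}$, which would suffice in the non-slanted setting) must do the work: using Lemmas \ref{lemma:composition of ssc-sso} and \ref{lemma:adjoints and ssc-sso} together with the preservation properties of the $\sigma$- and $\pi$-extensions from Lemma \ref{lemma:basic properties of extensions of slanted}, I expect the admissible witness to be extractable by a denseness-and-compactness argument mirroring the classical ALBA Ackermann lemma for admissible assignments (cf.~\cite{CoPa-nondist}), adapted to accommodate the fact that slanted operations send $\bba$ into $\kbbas$ or $\obbas$ rather than into $\bba$ itself.
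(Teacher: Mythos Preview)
Your ``if'' direction is correct and matches the paper. The gap is in the ``only if'' direction: your proposed witness $v'(p):=v(\alpha)$ is \emph{not} in general an element of $\bba$, and no amount of work with the ssc/sso calculus will make it one. Indeed, the whole point of the slanted setting (cf.~Remark \ref{rem: counterexample}) is that even for admissible $v$ the value $v(\alpha)$ of an sso term lands only in $O(\bba^\delta)$, not in $\bba$; for instance if $\alpha=\cnomm$ or $\alpha=g(q)$ with $g$ a proper o-slanted operation. So the assignment $v'$ you define is simply not admissible, and the verification you flag as ``the hard part'' would fail outright rather than succeed via compactness.

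The paper's argument does not attempt to prove $v(\alpha)\in\bba$. Instead it proceeds as follows: by Lemma \ref{Syn:Opn:Clsd:Appld:ClsdUp:Lemma}(2) (applied with $p$ vacuous), $\alpha(v)\in O(\bba^\delta)$, so $\alpha(v)=\bigvee\{a\in\bba\mid a\leq\alpha(v)\}$, an up-directed join of clopens. Then Lemma \ref{Esakia:Syn:Clsd:Opn:Lemma}(2) gives
\[
\bigwedge\{\beta_i(a)\mid a\in\bba,\ a\leq\alpha(v)\}\ \leq\ \bigvee\{\gamma_i(a)\mid a\in\bba,\ a\leq\alpha(v)\},
\]
and since each $\beta_i(a)\in K(\bba^\delta)$ and each $\gamma_i(a)\in O(\bba^\delta)$ (again Lemma \ref{Syn:Opn:Clsd:Appld:ClsdUp:Lemma}), compactness yields finitely many clopens whose join $u\in\bba$ satisfies $u\leq\alpha(v)$ and $\beta_i(u)\leq\gamma_i(u)$ for each $i$ separately; a further finite join over $i$ gives a single admissible witness. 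The lemmas you cite (\ref{lemma:composition of ssc-sso}, \ref{lemma:adjoints and ssc-sso}) concern only the syntactic shape of terms and play no role here; the operative ingredients are the topological Lemmas \ref{Syn:Opn:Clsd:Appld:ClsdUp:Lemma} and \ref{Esakia:Syn:Clsd:Opn:Lemma}.
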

The two lemmas above are proved in Section \ref{Sec: topological Ackermann}.

\begin{lemma}\label{Syn:Shape:Lemma} 

Executing ALBA on an analytic inductive $\mathcal{L}_\mathrm{LE}$-inequality $(\phi \leq \psi)[\overline{\alpha}/!\overline{x}, \overline{\beta}/!\overline{y},\overline{\gamma}/!\overline{z}, \overline{\delta}/!\overline{w}]$, as indicated in Section \ref{Spec:Alg:Section}, we obtain  quasi-inequalities each of which is such that each inequality in its antecedent, which as discussed at the end of Section \ref{Sec:ReductionElimination} is of either of the following forms:
\begin{equation}\label{eq:Lemma:SSO}
\nomi\leq \gamma\left(\overline{\mathsf{mv}(p)}/\overline{p}, \overline{\mathsf{mv}(q)}/\overline{q}\right)\quad \quad \delta\left(\overline{\mathsf{mv}(p)}/\overline{p}, \overline{\mathsf{mv}(q)}/\overline{q}\right)\leq\cnomn,
\end{equation}
is such that its left-hand side is ssc and its right-hand side is sso. 
\end{lemma}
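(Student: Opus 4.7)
The plan is to prove, by well-founded induction on the dependency order $<_\Omega$, the following intermediate claim: for every propositional variable $p$ with $\varepsilon(p)=1$, each element of $\mathsf{Mv}(p)$ is strictly syntactically closed; and for every variable $q$ with $\varepsilon(q)=\partial$, each element of $\mathsf{Mv}(q)$ is strictly syntactically open. Once this is available, the conclusion for \eqref{eq:Lemma:SSO} follows immediately, since the two shapes in the antecedent have a nominal (ssc) or conominal (sso) on the ``trivial'' side, while on the other side we find $\gamma[\overline{\mathsf{mv}(p)}/\overline{p},\overline{\mathsf{mv}(q)}/\overline{q}]$ or $\delta[\overline{\mathsf{mv}(p)}/\overline{p},\overline{\mathsf{mv}(q)}/\overline{q}]$, whose required closure will be checked separately.

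The crucial fact enabling the inductive step is that, since $\phi\leq\psi$ is analytic $(\Omega,\varepsilon)$-inductive, every parametric occurrence of a variable inside a maximal PIA-subformula $\alpha_p,\beta_p,\alpha_q,\beta_q$ must lie in a non-critical branch of some SRR-node along the critical branch, and hence must be $\varepsilon^\partial$-critical. Concretely, in $+\alpha_p$ we have $+p\prec +\alpha_p$ for the unique critical occurrence, while every parametric $p'$ (with $\varepsilon(p')=1$) occurs as $-p'\prec +\alpha_p$ and every parametric $q'$ (with $\varepsilon(q')=\partial$) as $+q'\prec +\alpha_p$. Lemma \ref{lemma:adjoints and ssc-sso}(2) then gives that $\mathsf{LA}(\alpha_p)[\nomj/u]$ is ssc, and Lemma \ref{lemma:polarities of la-ra}(1) gives that $\mathsf{LA}(\alpha_p)$ carries the polarity opposite to $\alpha_p$ in each parametric variable; hence in $\mathsf{LA}(\alpha_p)[\nomj/u]$ every parametric $p'$ occurs positively and every parametric $q'$ negatively. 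By the inductive hypothesis, each $\mathsf{mv}(p')\in\mathsf{Mv}(p')$ is ssc and each $\mathsf{mv}(q')\in\mathsf{Mv}(q')$ is sso, so Lemma \ref{lemma:composition of ssc-sso}(1) immediately yields that $\mathsf{LA}(\alpha_p)[\nomj/u,\overline{\mathsf{mv}(p')}/\overline{p'},\overline{\mathsf{mv}(q')}/\overline{q'}]$ is ssc. The three dual patterns (for $\mathsf{RA}(\beta_p)$, $\mathsf{LA}(\alpha_q)$, $\mathsf{RA}(\beta_q)$) are disposed of by invoking the corresponding items of Lemmas \ref{lemma:polarities of la-ra} and \ref{lemma:adjoints and ssc-sso} together with the appropriate clause of Lemma \ref{lemma:composition of ssc-sso}; the base case is obtained by specializing the argument to $<_\Omega$-minimal variables, where there are no parametric variables to substitute into.

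For the main conclusion, note that analyticity of $\phi\leq\psi$ ensures that every branch of $+\varphi$ and $-\psi$ is good, so the subtrees rooted just below the Skeleton portion are entirely PIA; combined with the preprocessing step which pushes $\pm\wedge$ and $\pm\vee$ out of each PIA subformula, each $\gamma$ is a definite positive PIA formula and each $\delta$ a definite negative PIA formula. By Lemma \ref{lemma:adjoints and ssc-sso}(1), $\gamma$ is sso and $\delta$ is ssc. Because $+\gamma$ and $-\delta$ agree with $\varepsilon^\partial$, each parametric $p$ with $\varepsilon(p)=1$ occurs negatively in $\gamma$ and positively in $\delta$, while each $q$ with $\varepsilon(q)=\partial$ occurs positively in $\gamma$ and negatively in $\delta$. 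Substituting the ssc members of $\mathsf{Mv}(p)$ for $\overline{p}$ and the sso members of $\mathsf{Mv}(q)$ for $\overline{q}$ into $\gamma$ then invokes Lemma \ref{lemma:composition of ssc-sso}(2) and returns an sso formula; the same substitution into $\delta$ invokes Lemma \ref{lemma:composition of ssc-sso}(1) and returns an ssc formula. The main obstacle throughout is the consistent bookkeeping of polarities: each of the four patterns $\mathsf{LA}(\alpha_p),\mathsf{RA}(\beta_p),\mathsf{LA}(\alpha_q),\mathsf{RA}(\beta_q)$, as well as the analogous argument for $\gamma$ and $\delta$, demands that the matched pair of items in Lemmas \ref{lemma:polarities of la-ra} and \ref{lemma:adjoints and ssc-sso} be lined up with the sign of the critical occurrence, and that the $\varepsilon^\partial$-property of non-critical subtrees be translated correctly into polarities within the $\mathsf{LA}$/$\mathsf{RA}$ images before Lemma \ref{lemma:composition of ssc-sso} can be applied.
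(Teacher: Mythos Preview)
Your proposal is correct and follows essentially the same approach as the paper's own proof: both argue by induction on $<_\Omega$ that each $\mathsf{mv}(p)$ is ssc and each $\mathsf{mv}(q)$ is sso, using Lemmas \ref{lemma:polarities of la-ra}, \ref{lemma:composition of ssc-sso}, and \ref{lemma:adjoints and ssc-sso} in tandem, and then conclude for $\gamma$ and $\delta$ via Lemma \ref{lemma:adjoints and ssc-sso}(1) combined with Lemma \ref{lemma:composition of ssc-sso}. Your write-up is in fact slightly more explicit than the paper's in justifying why the parametric occurrences carry the $\varepsilon^\partial$-polarity and why $\gamma,\delta$ may be taken definite after preprocessing.
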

\begin{proof}
Clearly, $\nomi$ is ssc and $\cnomn$ is sso. Let us show that the formula $\gamma\left(\overline{\mathsf{mv}(p)}/\overline{p}, \overline{\mathsf{mv}(q)}/\overline{q}\right)$ is sso while $\delta\left(\overline{\mathsf{mv}(p)}/\overline{p}, \overline{\mathsf{mv}(q)}/\overline{q}\right)$ is ssc. Recall from Notation \ref{notation: analytic inductive} that $\gamma(\overline{p}, \overline{q})$ (resp.~$\delta(\overline{p}, \overline{q})$) is a positive (resp.~negative) PIA term, and both  $\gamma$ and $\delta$ are  $\epsilon^\partial$-uniform as subterms of the original analytic inductive inequality.   Recall that $\epsilon (p) = 1$ for every variable $p$ in $\overline{p}$ and $\epsilon (q) = \partial$ for each $q$ in $\overline{q}$. Hence, $-p\prec +\gamma$ and $+q\prec +\gamma$, and $-p\prec -\delta$ and $+q\prec -\delta$  for each $p$ in $\overline{p}$ and  each $q$ in $\overline{q}$. Lemma \ref{lemma:adjoints and ssc-sso}.1 implies that $\gamma(\overline{p}, \overline{q})$ is sso and $\delta(\overline{p}, \overline{q})$) is ssc. Hence, the proof is complete if we show that  $\mathsf{mv}(p)$ is ssc for every variable $p$ such that $\epsilon (p) = 1$ and  $\mathsf{mv}(q)$ is sso for every variable $q$ such that $\epsilon (q) = \partial$. Recall (cf.~Subsection \ref{Spec:Alg:Section}) that for every $p$ in $\overline{p}$, the formula $\mathsf{mv}(p)$ is either of the form $\mathsf{LA}(\alpha_p)[\nomj_k/u,\overline{\mathsf{mv}(p)}/\overline{p},\overline{\mathsf{mv}(q)}/\overline{q}]$ for some definite positive PIA formula $\alpha_p$ (and hence $+p\prec +\alpha_p$), or $\mathsf{mv}(p)$ is of the form $\mathsf{RA}(\beta_p)[\cnomm_h/u,\overline{\mathsf{mv}(p)}/\overline{p},\overline{\mathsf{mv}(q)}/\overline{q}]$ for some definite negative PIA formula $\beta_p$ (and hence $+p\prec -\beta_p$). 
Likewise, for every $q$ in $\overline{q}$, the formula $\mathsf{mv}(q)$ is either of the form $\mathsf{LA}(\alpha_q)[\nomj_k/u,\overline{\mathsf{mv}(p)}/\overline{p},\overline{\mathsf{mv}(q)}/\overline{q}]$ for some definite positive PIA formula $\alpha_q$ (and hence $-q\prec +\alpha_q$), or $\mathsf{mv}(q)$ is of the form $\mathsf{RA}(\beta_q)[\cnomm_h/u,\overline{\mathsf{mv}(p)}/\overline{p},\overline{\mathsf{mv}(q)}/\overline{q}]$ for some definite negative PIA formula $\beta_q$ (and hence $-q\prec -\beta_q$). The proof proceeds by induction on $<_{\Omega}$.  If $p$  is $<_{\Omega}$-minimal, then the form of $\mathsf{mv}(p)$ simplifies to either $\mathsf{LA}(\alpha_p)[\nomj_k/u]$ for some positive PIA formula $\alpha_p$ such that $+p\prec +\alpha_p$, or to  $\mathsf{RA}(\beta_p)[\cnomm_h/u]$ for some negative PIA formula $\beta_p$ such that $+p\prec -\beta_p$. In either case, items 2 and 3 of Lemma \ref{lemma:adjoints and ssc-sso} guarantee that $\mathsf{mv}(p)$ is ssc. Similarly, items 2 and 3 of Lemma \ref{lemma:adjoints and ssc-sso} guarantee that $\mathsf{mv}(q)$ is sso when $q$  is $<_{\Omega}$-minimal. The inductive step follows from items 2 and 3 of Lemma \ref{lemma:adjoints and ssc-sso}, the inductive hypothesis, and the polarities of the coordinates of the formulas $\mathsf{LA}(\alpha_p)$, $\mathsf{LA}(\alpha_q)$,  $\mathsf{RA}(\beta_p)$, and $\mathsf{RA}(\beta_q)$ (cf.~Lemma \ref{lemma:polarities of la-ra}); as an example, consider the case in which  $\mathsf{mv}(q)$ is of the form $\mathsf{LA}(\alpha_q)[\nomj_k/u,\overline{\mathsf{mv}(p)}/\overline{p},\overline{\mathsf{mv}(q)}/\overline{q}]$ for some positive PIA formula $\alpha_q$ (and hence $-q\prec +\alpha_q$). Then by Lemma \ref{lemma:adjoints and ssc-sso}.3, the formula $\mathsf{LA}(\alpha_q)[\nomj_k/!u,\overline{p},\overline{q}]$, which, by Lemma \ref{lemma:polarities of la-ra} is antitone in $u$ and $\overline{p}$ and monotone in $\overline{q}$, is sso; hence, by induction hypothesis and Lemma \ref{lemma:composition of ssc-sso}, $\mathsf{mv}(q): = \mathsf{LA}(\alpha_q)[\nomj_k/!u,\overline{\mathsf{mv}(p)}/\overline{p},\overline{\mathsf{mv}(q)}/\overline{q}]$ is sso.
 \end{proof}
%
\begin{prop}[Correctness of executions of \textrm{ALBA} on analytic inductive inequalities under admissible assignments into slanted algebras]\label{Propo:Crtns:CanExtns} For any  analytic inductive $\mathcal{L}_\mathrm{LE}$-inequality $\phi \leq \psi$, if $\mathsf{ALBA}(\phi \leq \psi)$ denotes the set of pure $\mathcal{L}^+_\mathrm{LE}$-quasi-inequalities   generated by the \textrm{ALBA}-runs discussed in Section \ref{Spec:Alg:Section}, then for every slanted $\mathcal{L}_\mathrm{LE}$-algebra $\bba$,
\[\bbas \models_{\bba} \phi \leq \psi\quad \mbox{ iff } \quad\bbas \models_{\bba} \mathsf{ALBA} (\phi \leq \psi).\]
\end{prop}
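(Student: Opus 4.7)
The plan is to prove the equivalence step by step along the ALBA run on $\phi\leq \psi$, verifying that each transformation rule preserves validity under admissible assignments into $\bba$; that is, when the quasi-inequality is evaluated in $\bbas$ with propositional variables assigned to elements of $\bba$, nominals to $\jty(\bbas)$, and co-nominals to $\mty(\bbas)$. Since by Section \ref{Spec:Alg:Section} the ALBA run succeeds on any analytic inductive inequality, establishing rule-by-rule soundness (and invertibility) under admissible assignments yields the claim. The initial quasi-inequality \eqref{eq:initial quasi-inequality} is equivalent to $\bbas\models_{\bba}\phi\leq\psi$ by the denseness of $\bba$ in $\bbas$, which guarantees that approximating each PIA-subformula above by nominals (and below by co-nominals), and each $\varepsilon^\partial$-subformula dually, loses no information.

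For Stage 1, the monotone variable elimination is the standard Ackermann principle applied at the extreme values $\top$ and $\bot$, which are admissible; the distribution rules used to push skeleton connectives outward are sound because, by Lemma \ref{lemma:basic properties of extensions of slanted}, the $\sigma$-extensions of connectives in $\mathcal{F}$ and the $\pi$-extensions of those in $\mathcal{G}$ preserve (resp.\ reverse) arbitrary joins and meets in the appropriate coordinates, so in particular the finite distributions invoked during preprocessing hold in $\bbas$; and splitting is sound in any lattice. For Stage 2, the residuation rules are sound because Lemma \ref{lemma:basic properties of extensions of slanted} provides, in $\bbas$, right residuals (resp.\ Galois adjoints) of $f^\sigma$ and left residuals (resp.\ Galois adjoints) of $g^\pi$ in every coordinate; these are exactly the operations interpreting the expanded-language connectives $f_i^\sharp$ and $g_i^\flat$. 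The critical step is the soundness and invertibility of the two Ackermann rules. These are furnished by the admissible Ackermann lemmas (Lemmas \ref{Ackermann:Dscrptv:Right:Lemma} and \ref{Ackermann:Dscrptv:Left:Lemma}), whose applicability requires the substituted minimal valuations to be strictly syntactically closed (for the right-handed rule) or strictly syntactically open (for the left-handed rule). Lemma \ref{Syn:Shape:Lemma} guarantees that, because of the analytic inductive shape of the input, after the preparatory applications of adjunction and residuation rules, the formulas $\mathsf{mv}(p)$ and $\mathsf{mv}(q)$ acting as minimal valuations are ssc and sso respectively, so the Ackermann rule can be applied iteratively along $<_{\Omega}$ until all propositional variables are eliminated.

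The main obstacle lies precisely in the shift from the usual syntactically closed/open (SC/SO) classification to the \emph{strictly} syntactically closed/open (SSC/SSO) classification of Definition \ref{strictlySyn:Opn:Clsd:Definition}. In the standard (non-slanted) setting, each $f\in\mathcal{F}$ interprets as a map $\bba\to\bba$, so under an admissible valuation any SC-term evaluates to a closed element of $\bbas$; the Ackermann lemma then only needs SC/SO formulas. In the slanted setting, however, $f^\bba$ lands in $\kbbas$ rather than in $\bba$, so the SC/SO property no longer suffices to make the corresponding topological arguments go through: one must treat the primitive connectives on a par with their adjoints and residuals in $\mathcal{F}^{*}\cup\mathcal{G}^{*}$, which is precisely what the SSC/SSO grammar does. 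The technical heart of the argument is therefore verifying that this refined strict classification is preserved through the manipulations performed by ALBA on analytic inductive inequalities, and this is exactly what Lemmas \ref{lemma:polarities of la-ra}, \ref{lemma:composition of ssc-sso}, \ref{lemma:adjoints and ssc-sso} and \ref{Syn:Shape:Lemma} achieve. Once these are in place, the proof reduces to reassembling the standard ALBA soundness chain, now using the admissible Ackermann lemmas of Lemmas \ref{Ackermann:Dscrptv:Right:Lemma}--\ref{Ackermann:Dscrptv:Left:Lemma} in place of their non-slanted counterparts.
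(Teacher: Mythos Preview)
Your proposal is correct and follows essentially the same approach as the paper: verify that all ALBA rules except the Ackermann rules are unproblematically sound and invertible under admissible assignments (as in the standard setting), and then isolate the Ackermann rules as the only delicate step, appealing to Lemmas \ref{Ackermann:Dscrptv:Right:Lemma}, \ref{Ackermann:Dscrptv:Left:Lemma} together with Lemma \ref{Syn:Shape:Lemma} to guarantee that the ssc/sso side-conditions are met throughout the run. Your discussion of why the strict (SSC/SSO) classification is needed in place of the ordinary SC/SO one is accurate and matches the paper's own diagnosis (cf.\ Remark \ref{rem: counterexample}); one small imprecision is that the Ackermann lemmas require not only the minimal valuation $\alpha$ to be ssc (resp.\ sso), but also every remaining non-pure inequality $\beta_i\leq\gamma_i$ to have ssc left-hand side and sso right-hand side, a point you leave implicit but which is covered by the same chain of lemmas you cite.
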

\begin{proof}
The proof is similar to the correctness proof of ALBA runs under arbitrary assignments in the standard setting (see e.g.~\cite[Correctness Theorem]{CoPa-Dist} and \cite[Correctness theorem]{CoPa-nondist}). 
The only significant difference is that the Ackermann-rules are generally not invertible under admissible assignments, not even on standard algebras (cf.\ \cite[Example 9.1]{CoPa-Dist}), which, as discussed after Definition \ref{def:slanted LE}, correspond to a proper subclass of slanted algebras. However, by Lemmas \ref{Ackermann:Dscrptv:Left:Lemma} and \ref{Ackermann:Dscrptv:Right:Lemma}, when the left-hand and right-hand sides of all non-pure inequalities involved in the application of an Ackermann-rule are, respectively, ssc and sso, the rule is sound and invertible under admissible assignments. By Lemma \ref{Syn:Shape:Lemma}, this requirement on the syntactic shape is always satisfied when the rule is applied in the \textrm{ALBA}-runs discussed in Section \ref{Spec:Alg:Section}.
\end{proof}

\section{Transfer of canonicity for DLE-inequalities}
\label{sec:Sahlqvist canonicity via translation}
In \cite{CoPaZh19},  G\"odel-McKinsey-Tarski type translations (GMT-type translations) are used to obtain Sahlqvist correspondence and canonicity as transfer results in a number of settings. Specifically, GMT-type translations $\tau_\epsilon$ are defined parametrically in each order-type on a set $\mathsf{PROP}$ of propositional variables  so as to preserve the syntactic shape of $(\Omega, \epsilon)$-inductive inequalities in passing from arbitrary DLE-languages to corresponding target Boolean algebra expansion languages (BAE-languages) enriched with additional S4-modalities $\Diamond_{\geq}$ and $\Box_{\leq}$. While {\em correspondence} via translation is obtained in full generality for inductive inequalities in arbitrary DLE-languages (cf.~\cite[Theorem 6.1]{{CoPaZh19}}), the {\em canonicity} via translation of inductive inequalities is obtained in  \cite{{CoPaZh19}} only in the restricted setting of normal modal expansions of {\em bi-Heyting} algebras (bHAEs) (cf.~\cite[Theorem 7.1]{{CoPaZh19}}). The argument can be summarized by means of the following diagram: for every bHAE $\bba$ and every $(\Omega, \epsilon)$-inductive inequality $\phi\leq\psi$ of compatible signature, a BAE $\mathbb{B}$ exists such that the vertical bi-implications hold. Hence, the canonicity of $\phi\leq \psi$ follows from the fact that the BAE-inequality $\tau_\epsilon(\phi)\leq\tau_\epsilon(\psi)$ is an $(\Omega, \epsilon)$-inductive inequality, and that every such inequality has been shown to be canonical within generalized Sahlqvist theory in the framework of classical (i.e.~Boolean) modal logic (cf.~\cite{CoGoVaSEQMA}).

\begin{center}
	\begin{tabular}{l c l}
		
		$\bbA\models\phi\leq\psi$ & & $\bbas\models \phi\leq\psi$ \\
		$\ \ \ \ \Updownarrow$   & & $\ \ \ \ \ \Updownarrow $ \\
		$\mathbb{B}\models\tau_\epsilon(\phi)\leq\tau_\epsilon(\psi)$ &\ \ \ $\Leftrightarrow$ \ \ \  &  $\mathbb{B}^\delta\models\tau_\epsilon(\phi)\leq\tau_\epsilon(\psi)$\\
	\end{tabular}
\end{center}
As explained in \cite[Section 7.2]{CoPaZh19}, this argument could not be carried beyond the setting of bHAEs only because, although $\tau_\epsilon(\phi)\leq\tau_\epsilon(\psi)$ has the appropriate (inductive) syntactic shape, if $\bba$ is not a bHAE, the algebraic interpretation of the S4-modalities $\Diamond_{\geq}$ and $\Box_{\leq}$ in $\mathbb{B}$ turns out to be {\em slanted}  (according to the terminology introduced in the present paper), and the then state-of-the-art theory of canonicity would not account for inequalities between terms built out of slanted connectives. However, we are now in a position to apply Theorem \ref{Thm:ALBA:Canonicity} to justify the horizontal bi-implication of the diagram above, and hence to obtain the canonicity of a restricted class of analytic inductive inequalities in arbitrary DLE-signatures as a transfer result of the {\em slanted canonicity} of analytic inductive BAE-inequalities.  In what follows, we recall the definition of $\tau_\epsilon$, and then define the class of analytic inductive DLE-inequalities $\phi\leq \psi$ such that $\tau_\epsilon(\phi)\leq\tau_\epsilon(\psi)$ is analytic inductive.

\paragraph{Parametrized translation} Recall from \cite[Section 5.2.1]{CoPaZh19} that, for any normal DLE-signature $\mathcal{L}_{\mathrm{DLE}} = \mathcal{L}_{\mathrm{DLE}}(\mathcal{F}, \mathcal{G})$, the signature of the target language of the parametric GMT-type translations $\tau_{\epsilon}$ is the  normal BAE-signature $\mathcal{L}^{\circ}_{\mathrm{BAE}} = \mathcal{L}_{\mathrm{BAE}}(\mathcal{F}^{\circ}, \mathcal{G}^{\circ})$ where $\mathcal{F}^{\circ}: = \{\Diamond_\geq\}\cup
\{f^{\circ}\mid f\in \mathcal{F}\}$, and $\mathcal{G}^{\circ}: = \{\Box_\leq\}\cup
\{g^{\circ}\mid g\in \mathcal{G}\}$, and for every $f\in \mathcal{F}$ (resp.\ $g\in \mathcal{G}$), the  connective $f^{\circ}$ (resp.\ $g^{\circ}$) is such that $n_{f^{\circ}} = n_{f}$ (resp.\ $n_{g^{\circ}} = n_{g}$) and $\epsilon_{f^{\circ}}(i) = 1$ for each $1\leq i\leq n_f$ (resp.\ $\epsilon_{g^{\circ}}(i) = 1$ for each $1\leq i\leq n_g$).

The target language  for the parametrized GMT translations over $\mathsf{Prop}$ is given by
\[\mathcal{L}^\circ_{\mathrm{BAE}}\ni\alpha:: = p\mid \bot \mid \alpha\vee \alpha\mid\alpha\wedge \alpha \mid\neg\alpha\mid f^\circ(\overline{\alpha}) \mid g^\circ(\overline{\alpha})\mid \Diamond_{\geq}\alpha\mid \Box_{\leq}\alpha.\]

For any order-type $\epsilon$ on $\mathsf{PROP}$, the translation $\tau_\epsilon : \mathcal{L}_{\mathrm{DLE}}\to \mathcal{L}^\circ_{\mathrm{BAE}}$ is defined by the following recursion:

\begin{center}
	\begin{tabular}{c c c}
		$
		\tau_\epsilon(p) = \begin{cases} \Box_{\leq}  p &\mbox{if } \epsilon(p) = 1 \\
		\Diamond_{\geq} p & \mbox{if } \epsilon(p) = \partial, \end{cases}
		$
		& $\quad$ &

		\begin{tabular}{r c l}
			$\tau_\epsilon (\bot)$ &  = & $\bot$ \\
			$\tau_\epsilon (\top)$ &  = & $\top$ \\
			$\tau_\epsilon (\phi\wedge \psi)$ &  = & $\tau_\epsilon (\phi)\wedge \tau_\epsilon(\psi)$  \\
			$\tau_\epsilon (\phi\vee \psi)$ &  = & $\tau_\epsilon (\phi)\vee \tau_\epsilon(\psi)$  \\
			$\tau_\epsilon (f(\overline{\phi}))$ &  = & $\Diamond_{\geq}f^\circ(\overline{\tau_\epsilon (\phi)}^{\epsilon_f})$  \\
			$\tau_\epsilon (g(\overline{\phi}))$ &  = & $\Box_{\leq}g^\circ(\overline{\tau_\epsilon (\phi)}^{\epsilon_g})$  \\
		\end{tabular}
		\\
	\end{tabular}
\end{center}
where for each order-type $\eta$ on $n$ and any $n$-tuple $\overline{\psi}$ of $\mathcal{L}^\circ_{\mathrm{BAE}}$-formulas,  $\overline{\psi}^\eta$ denotes the $n$-tuple $(\psi'_i)_{i = 1}^n$, where $\psi_i' = \psi_i$  if $\eta(i) = 1$ and $\psi_i' = \neg \psi_i$  if $\eta(i) = \partial$.

\medskip 

It is clear from its definition that $\tau_{\epsilon}$ is intended to preserve the (good or excellent) shape of the $\epsilon$-critical branches of $(\Omega, \epsilon)$-inductive inequalities; however, $\tau_{\epsilon}$ will systematically destroy the good shape of non-critical branches (i.e.~$\epsilon^\partial$-critical branches) by inserting Skeleton nodes $+\Diamond_{\geq}$ and $-\Box_{\leq}$ in the scope of PIA nodes, whenever the given $\epsilon^\partial$-critical variable originally occurs in the scope of a PIA-connective. This motivates the following
\begin{definition}
\label{def: transferable}
For every order-type $\epsilon$ on $\mathsf{PROP}$,  an $(\Omega, \epsilon)$-analytic inductive inequality $(\varphi\leq \psi)[\overline{\alpha}/!\overline{x}, \overline{\beta}/!\overline{y},\overline{\gamma}/!\overline{z}, \overline{\delta}/!\overline{w}]$ (cf.~Notation \ref{notation: analytic inductive})  is $\tau_{\epsilon}$-{\em transferable} if for every maximal  positive (resp.~negative) $\varepsilon^{\partial}$-uniform PIA-subformula $\gamma$ in $\overline{\gamma}$ (resp.~$\delta$ in $\overline{\delta}$), either $\gamma = q$ (resp.~$\delta = p$) for some $q\in \mathsf{PROP}$ (resp.~$p\in \mathsf{PROP}$)  such that $\epsilon(q) = \partial$ (resp.~$\epsilon(p) = 1$), or $\gamma$ (resp.~$\delta$) does not contain atomic propositions at all.
\end{definition}
\begin{example} The inequality 
$\Diamond (\Box p_1\wedge \Box\Box p_2)\leq \Box (\Diamond \top \vee p_2)\wedge \Box (p_1\vee \Diamond \Diamond \top)$ is  $\tau_{\epsilon}$-transferable analytic $\epsilon$-Sahlqvist  for $\epsilon (p_1, p_2) = (1, 1)$. Its $\tau_\epsilon$-translation is the following analytic $\epsilon$-Sahlqvist inequality:
\[\Diamond_\geq \Diamond^\circ (\Box_\leq \Box^\circ \Box_\leq p_1\wedge \Box_\leq \Box^\circ\Box_\leq \Box^\circ \Box_\leq p_2)\leq \Box_\leq \Box^\circ (\Diamond_\geq \Diamond^\circ \top \vee \Box_\leq p_2)\wedge \Box_\leq \Box^\circ (\Box_\leq p_1\vee \Diamond_\geq \Diamond^\circ\Diamond_\geq \Diamond^\circ \top).\]
\end{example}
From the definition above, it  immediately follows that
\begin{prop}
For every $\tau_{\epsilon}$-transferable $(\Omega, \epsilon)$-analytic inductive $\mathcal{L}_{\mathrm{DLE}}$-inequality $\varphi\leq \psi$, the $\mathcal{L}_{\mathrm{BAE}}$-inequality $\tau_\epsilon(\phi)\leq\tau_\epsilon(\psi)$ is analytic inductive, and hence s-canonical (cf.~Theorem \ref{Thm:ALBA:Canonicity}).
\end{prop}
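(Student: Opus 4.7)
The plan is to verify directly that $\tau_\epsilon(\phi) \leq \tau_\epsilon(\psi)$ satisfies Definition \ref{def:type5}, i.e.~that it is analytic $(\Omega, \epsilon)$-inductive in $\mathcal{L}^\circ_{\mathrm{BAE}}$; s-canonicity then follows immediately by Theorem \ref{Thm:ALBA:Canonicity}. First I would record how $\tau_\epsilon$ acts on each signed generation tree: it wraps every $f \in \mathcal{F}$ node with a $\Diamond_\geq$ node, every $g \in \mathcal{G}$ node with a $\Box_\leq$ node, leaves the lattice connectives as they are, and wraps each variable $p$ with $\Box_\leq$ or $\Diamond_\geq$ according to whether $\epsilon(p) = 1$ or $\epsilon(p) = \partial$, inserting a sign-flipping $\neg$ in each coordinate with $\epsilon_f(i) = \partial$ or $\epsilon_g(i) = \partial$. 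Under the DLE classification of Table \ref{Join:and:Meet:Friendly:Table:DLE} applied to $\mathcal{L}^\circ_{\mathrm{BAE}}$, the Skeleton/PIA classification of every wrap matches that of the connective it wraps: a positive SLR $+f$ translates to $+\Diamond_\geq +f^\circ$, both SLR; a negative SLR $-g$ translates to $-\Box_\leq -g^\circ$, both SLR; dually, a PIA $+g$ translates to $+\Box_\leq +g^\circ$ (SRA-over-SRA or SRA-over-SRR), and a PIA $-f$ translates to $-\Diamond_\geq -f^\circ$ (analogously PIA throughout).

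With this local bookkeeping in hand, I would next argue that every $\epsilon$-critical branch remains good: its original Skeleton upper segment $P_2$ translates to a Skeleton segment, its PIA lower segment $P_1$ translates to a PIA segment, and the critical leaf $+p$ with $\epsilon(p) = 1$ acquires a $+\Box_\leq$ wrap which is SRA, thereby merely extending $P_1$ downward by one node and preserving the Skeleton-above-PIA discipline (dually for $-p$ with $\epsilon(p) = \partial$). Likewise, the $(\Omega, \epsilon)$-inductivity conditions --- namely the dependency order $<_\Omega$ and the $\epsilon^\partial$-uniformity of off-branch children at SRR nodes --- transfer verbatim, since $\tau_\epsilon$ preserves signs, variable dependencies, and the order-type by design; this amounts to the shape-preservation facts already established in \cite[Section 5.2]{CoPaZh19}.

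The hard part, and the step where transferability is indispensable, is verifying that every branch ending at an $\epsilon^\partial$-critical leaf is also good. Such a leaf is necessarily inside some maximal positive (resp.~negative) $\epsilon^\partial$-uniform PIA-subformula $\gamma \in \overline{\gamma}$ (resp.~$\delta \in \overline{\delta}$). Here the translation wraps the leaf with a \emph{Skeleton} node (for instance $+q$ with $\epsilon(q) = \partial$ becomes $+\Diamond_\geq +q$), and if this wrap sat below a non-trivial PIA structure coming from $\tau_\epsilon(\gamma)$, the resulting branch would exhibit a Skeleton-over-PIA-over-Skeleton pattern --- not good. This is precisely what Definition \ref{def: transferable} prevents: either $\gamma = q$ is a bare variable, in which case $\tau_\epsilon(\gamma) = \Diamond_\geq q$ has no internal PIA connectives at all and the Skeleton wrap merges smoothly into the Skeleton context sitting above $\gamma$ (so $P_1$ has length zero and the branch is all Skeleton); or $\gamma$ contains no atomic propositions, in which case no Skeleton wrap is inserted at any leaf of $\tau_\epsilon(\gamma)$ and the internal structure of $\tau_\epsilon(\gamma)$ is entirely PIA. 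Symmetric reasoning handles $\delta$, and combining all cases yields that $\tau_\epsilon(\phi) \leq \tau_\epsilon(\psi)$ is analytic $(\Omega, \epsilon)$-inductive; Theorem \ref{Thm:ALBA:Canonicity} then delivers its s-canonicity.
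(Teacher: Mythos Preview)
The paper's own proof is the single line ``immediately follows from the definition above'', and your argument spells out what that line is meant to cover. Your treatment of $\epsilon$-critical branches and of the transfer of the $(\Omega,\epsilon)$-inductivity data is correct (with the minor addition that the inserted $\neg$'s, being both $f$-type and $g$-type in a BAE, can be classified as either Skeleton or PIA, so they never obstruct goodness).

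There is, however, a genuine gap in your last paragraph. You assert that every $\epsilon^\partial$-critical leaf lies inside some $\gamma \in \overline{\gamma}$ or $\delta \in \overline{\delta}$, but this fails whenever such a leaf occurs inside an $\alpha$ or $\beta$ --- for instance as the side-argument of an SRR node on a critical branch. By Notation~\ref{notation: analytic inductive}, $\overline{\gamma}$ and $\overline{\delta}$ collect only the wholly $\epsilon^\partial$-uniform maximal PIA-subformulas sitting at the leaves of the Skeleton; they do not reach inside $\overline{\alpha}$ or $\overline{\beta}$, and Definition~\ref{def: transferable} imposes no constraint there. Concretely, take $\Diamond\Box(p \to q) \leq q$ with $\Diamond\in\mathcal{F}$, $\Box,\to \in \mathcal{G}$, $\epsilon_\to = (\partial,1)$, $\epsilon(p)=\epsilon(q)=1$, $p <_\Omega q$. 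This inequality is analytic $(\Omega,\epsilon)$-inductive and $\tau_\epsilon$-transferable (the only Skeleton-level $\epsilon^\partial$-subformula is $\delta = q$ on the right, a bare variable), yet the branch of $+\tau_\epsilon(\phi)$ ending at $-p$ reads
\[
+\Diamond_\geq,\ +\Diamond^\circ,\ +\Box_\leq,\ +\Box^\circ,\ +\Box_\leq,\ +{\to}^\circ,\ +\neg,\ -\Box_\leq,\ -p,
\]
where $+\Box_\leq,+\Box^\circ,+{\to}^\circ$ are PIA-only and the subsequent $-\Box_\leq$ is SLR (Skeleton) only; hence this branch is not good and $\tau_\epsilon(\phi)\leq\tau_\epsilon(\psi)$ fails to be analytic. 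Your argument (and, it appears, the proposition as literally stated) requires the stronger hypothesis that the $\alpha$'s and $\beta$'s themselves contain no $\epsilon^\partial$-critical occurrences.
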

Hence, we can extend \cite[Theorem 7.1]{CoPaZh19} as follows:

\begin{theorem}[Canonicity  via translation]\label{theor:canon via transl bi HAE}
	For any order-type $\epsilon$ and any strict order $\Omega$ on $\mathsf{PROP}$,   the slanted canonicity theorem of analytic $(\Omega, \epsilon)$-inductive $\mathcal{L}^\circ_{\mathrm{BAE}}$-inequalities transfers to the standard canonicity of $\tau_{\epsilon}$-transferable analytic $(\Omega, \epsilon)$-inductive $\mathcal{L}_{\mathrm{DLE}}$-inequalities.
\end{theorem}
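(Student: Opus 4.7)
The plan is to realize the U-shaped diagram displayed immediately before the theorem statement, by replacing its bottom horizontal step, which in \cite{CoPaZh19} was the classical Sahlqvist canonicity of analytic inductive BAE-inequalities over bi-Heyting expansions, with the slanted canonicity guaranteed by Theorem \ref{Thm:ALBA:Canonicity} applied to a suitable slanted BAE $\mathbb{B}$. So fix a $\tau_\epsilon$-transferable analytic $(\Omega,\epsilon)$-inductive $\mathcal{L}_{\mathrm{DLE}}$-inequality $\varphi\leq\psi$ and a DLE $\bbA$ with $\bbA\models\varphi\leq\psi$; I must show $\bbA^\delta\models\varphi\leq\psi$.

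The first step is to recall from \cite[Section 5]{CoPaZh19} the construction of a companion algebra $\mathbb{B}=\mathbb{B}(\bbA)$ in the signature $\mathcal{L}^\circ_{\mathrm{BAE}}$, built on top of the Boolean envelope of the lattice reduct of $\bbA$, in which the S4-modalities $\Diamond_\geq,\Box_\leq$ encode the DLE-order on $\bbA$, and each $f^\circ, g^\circ$ interprets the corresponding $\mathcal{F}$- or $\mathcal{G}$-connective of $\bbA$. As already observed in \cite[Section 7.2]{CoPaZh19}, if $\bbA$ is not bi-Heyting then the natural interpretations of $\Diamond_\geq,\Box_\leq$ on $\mathbb{B}$ map elements of $\mathbb{B}$ to closed, respectively open, elements of $\mathbb{B}^\delta$. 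In the terminology of the present paper this means that $\mathbb{B}$ is a \emph{slanted} $\mathcal{L}^\circ_{\mathrm{BAE}}$-algebra in the sense of Definition \ref{def:slanted LE}, and its canonical extension $\mathbb{B}^\delta$ is to be computed as in Definition \ref{def:canext slanted}.

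The second step is to verify the two vertical bi-implications of the diagram. The left one, $\bbA\models\varphi\leq\psi$ iff $\mathbb{B}\models\tau_\epsilon(\varphi)\leq\tau_\epsilon(\psi)$, is proved by induction on $\varphi$ exactly as in \cite[Theorem 6.1]{CoPaZh19}, now with ``$\models$'' on $\mathbb{B}$ interpreted in the slanted sense of Definition \ref{def:slanted satisfaction and validity}. The right vertical bi-implication, $\bbA^\delta\models\varphi\leq\psi$ iff $\mathbb{B}^\delta\models\tau_\epsilon(\varphi)\leq\tau_\epsilon(\psi)$, is again proved by induction, and here Lemma \ref{lemma:basic properties of extensions of slanted} is used to identify $(\Diamond_\geq)^\sigma$ and $(\Box_\leq)^\pi$ on $\mathbb{B}^\delta$ with the completely join- and meet-preserving maps induced on $\mathbb{B}^\delta$ by the order of the perfect DLE $\bbA^\delta$; this is the step at which the difficulty signalled in \cite{CoPaZh19} is now absorbed by the slanted machinery.

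The final step is the bottom horizontal equivalence: by the hypothesis that $\varphi\leq\psi$ is $\tau_\epsilon$-transferable, the proposition preceding the theorem gives that $\tau_\epsilon(\varphi)\leq\tau_\epsilon(\psi)$ is an analytic $(\Omega,\epsilon)$-inductive $\mathcal{L}^\circ_{\mathrm{BAE}}$-inequality, hence s-canonical by Theorem \ref{Thm:ALBA:Canonicity}, i.e.\ $\mathbb{B}\models\tau_\epsilon(\varphi)\leq\tau_\epsilon(\psi)$ implies $\mathbb{B}^\delta\models\tau_\epsilon(\varphi)\leq\tau_\epsilon(\psi)$. Chaining this with the two vertical equivalences yields $\bbA^\delta\models\varphi\leq\psi$, as required. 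The genuine obstacle in executing this plan is the careful verification of the right vertical equivalence: one must check that the $\sigma$-/$\pi$-extensions of the slanted interpretations of $\Diamond_\geq,\Box_\leq,f^\circ,g^\circ$ on $\mathbb{B}^\delta$ interact correctly with $\tau_\epsilon$ at each inductive clause, and in particular that the $\tau_\epsilon$-transferability condition prevents any mismatch between $\sigma$- and $\pi$-choices at $\epsilon^\partial$-uniform PIA-subformulas. Once this bookkeeping is in place, the slanted canonicity theorem plugs in seamlessly in place of the bHAE-specific argument used in \cite{CoPaZh19}.
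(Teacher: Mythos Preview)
Your proposal is correct and follows the same route as the paper. The paper does not spell out a proof of this theorem beyond the sentence ``Hence, we can extend \cite[Theorem 7.1]{CoPaZh19} as follows'': it relies on the U-shaped diagram reproduced at the start of Section~\ref{sec:Sahlqvist canonicity via translation}, takes the two vertical bi-implications as already supplied by the machinery of \cite{CoPaZh19} (now read with $\mathbb{B}$ a slanted BAE), and plugs in the preceding proposition plus Theorem~\ref{Thm:ALBA:Canonicity} for the horizontal step. You have reconstructed exactly this, with more care than the paper itself takes.

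One small remark on your final paragraph: the $\tau_\epsilon$-transferability hypothesis is used \emph{only} to ensure that $\tau_\epsilon(\varphi)\leq\tau_\epsilon(\psi)$ is analytic inductive, i.e.\ to make the horizontal step go through via Theorem~\ref{Thm:ALBA:Canonicity}. The paper treats the two vertical bi-implications as holding for arbitrary DLEs independently of transferability (the obstruction identified in \cite[Section 7.2]{CoPaZh19} was solely the horizontal step). So your worry that transferability is also needed to prevent $\sigma$/$\pi$ mismatches in the right vertical equivalence is a slight over-attribution; that verification is bookkeeping on the companion-algebra construction of \cite{CoPaZh19} and does not depend on the shape of the inequality.
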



\section{Canonicity in the setting of subordination algebras}
\label{sec:subordination}
In \cite{DeHA}, the canonicity of a  subclass of Sahlqvist formulas (the so-called s-Sahlqvist formulas, cf.~Definition \ref{def:s-Sahlqvist}) in the signature of tense modal logic is shown w.r.t.~the semantics of subordination algebras and their canonical extensions. In this section, we obtain a strengthening of this result as a consequence of Theorem \ref{Thm:ALBA:Canonicity}, via the following steps: (a) equivalently presenting subordination algebras as a class of {\em slanted BAEs} (cf.~Definitions \ref{def:sub-algebras and slanted BAEs} and \ref{def: perfect sub-algebras and perfect BAEs}, and Proposition \ref{prop:subordination and slanted}); (b) verifying that  satisfaction and validity of tense formulas/inequalities are preserved and reflected across this equivalent presentation (cf.~Proposition \ref{prop:validity subordination and slanted});  (c) verifying that the algebraic canonicity of tense formulas in the setting of subordination algebras can be reduced to their {\em slanted} canonicity (cf.~Proposition \ref{prop: reducing sub-canonicity to s-canonicity}); (d)  recognizing s-Sahlqvist formulas as a proper subclass of analytic inductive formulas of classical tense logic (cf.~Proposition \ref{prop:s-sahlqvist are analytic sahlqvist}). Having understood the canonicity of s-Sahlqvist formulas in the setting of subordination algebras as  an instance of slanted canonicity makes it possible to consider various extensions of this result which we discuss  in the conclusions.

\begin{definition}[Subordination algebra]  A \emph{subordination algebra} is a pair $\bbs = (A, \prec)$ where $A$ is a Boolean algebra and $\prec$ is a binary relation on $A$ verifying the following conditions for all $a, b, c, d\in A$: 
\begin{enumerate}
\item[S1.] $0 \prec 0$ and $1 \prec 1$,
\item[S2.] $a, b \prec c$ implies $a \vee b \prec c $,
\item[S3.] $a \prec b,c $ implies $ a \prec b \wedge c$,
\item[S4.] $a \prec b \leq c \prec d$ implies $a \prec d$.
\end{enumerate}
\end{definition}
Properties S1-S4 imply that ${\prec} (a, -): = \{b\in A\mid a\prec b\}$ is a filter of $A$  and  ${\prec} (-, a): = \{b\in A\mid b\prec a\}$ is an ideal of $A$ for every $a\in A$. In what follows, we will sometimes use the notations ${\prec} (S, -): = \bigcup\{ {\prec} (a, -)\mid a\in S\}$ for any $S\subseteq A$, and ${\prec} (x, -): = \bigcup\{ {\prec} (a, -)\mid x\leq a\}$ for any $x\in \jty(A^\delta)$.

\begin{definition} 
\label{def: complete perfect SA} A subordination algebra $\bbs = (A, \prec)$ is \emph{complete} (resp.~\emph{perfect}) if $A$ is complete (resp.~complete and atomic), and  $\prec$ satisfies the following infinitary versions of conditions S2 and S3: for all $a \in A$ and $S \subseteq A$,
\begin{enumerate}
\item[S2$^\infty$.] if $ s \prec a$ for all $s \in S$, then $\bigvee S \prec a$;
\item[S3$^\infty$.] if $a \prec s $ for all $s \in S$, then $a \prec \bigwedge S$.
\end{enumerate}
The {\em  (constructive) canonical extension} of a subordination algebra  $\bbs = (A, \prec)$ (cf.~\cite[Definition 1.10]{DeHA}) is the structure $\bbss: = (A^\delta, {\prec}^\delta)$ such that  $A^\delta$ is the canonical extension of $A$ and ${\prec}^\delta$ is the binary relation defined as follows: 
\begin{enumerate}
\item if $k \in K(A^\delta)$ and $o \in O(A^\delta)$, then $k \prec^\delta o$  if $k \leq a \prec b \leq o$ for some $a,b \in A$,
\item if $u,v \in A^\delta$, then $u \prec^\delta v$ if  for all $k \in K(A^\delta)$ and $o \in O(A^\delta)$, $v \leq o$ and $k \leq u$ imply $k \prec^\delta o$.
\end{enumerate}
\end{definition}
The (constructive) canonical extension of a subordination algebra is a perfect (resp.~complete) subordination algebra (cf.~\cite[Definitions 1.7 and 1.10]{DeHA}). 

Recall that a {\em tense BAE} is a BAE $\bba = (A, \Diamond, \blacksquare)$  such that $\Diamond a\leq b$ iff $a\leq\blacksquare b$ for every $a, b\in A$. For any such tense BAE, we let $\Box$ and $\Diamondblack$ denote the modal operators dual to  $\Diamond $ and $\blacksquare$ respectively. That is, $\Box a: = \neg \Diamond \neg a$ and $\Diamondblack a: = \neg \blacksquare \neg a$ for any $a\in A$.
Perfect (resp.~complete) subordination algebras can be associated with perfect (resp.~complete)  tense BAEs as follows:

\begin{definition}
\label{def: perfect sub-algebras and perfect BAEs}
For every perfect (resp.~complete) subordination algebra $\bbs = (A, \prec)$, its associated perfect (resp.~complete) tense BAE is $\bbs^+: = (A, \Diamond^{+}, \blacksquare^+)$ where $\Diamond^{+}: A\to A$ is defined by the assignment $u\mapsto \bigwedge \{v\in A\mid u\prec v\}$ and  $\blacksquare^+ : A \rightarrow A$ is defined by the assignment $u \mapsto \bigvee\lbrace v \in A \mid v \prec u \rbrace$; for every perfect (resp.~complete) tense BAE $\bba = (A, \Diamond, \blacksquare)$, we let $\bba_+:  = (A, \prec_+)$, where $u \prec_+ v$ iff $\Diamond u\leq v$, or equivalently, iff $u\leq \blacksquare v$ for all $u, v$ in $A$.
\end{definition}

\begin{definition}
A {\em tense slanted BAE} is a slanted BAE $\mathbb{S} = (A, \Diamond, \blacksquare)$ such that $A$ is a Boolean algebra, $\Diamond: A\to A^\delta$ is a c-slanted finitely join-preserving map, $\blacksquare: A\to A^\delta$ is an o-slanted finitely join-preserving map and moreover, for every $a, b\in A$, \[\Diamond a\leq b \quad \mbox{ iff }\quad a\leq\blacksquare b.\] For such an s-algebra, we let $\Box: A\to A^\delta$ denote the o-slanted operator defined by the assignment $a\mapsto \neg^{A^\delta} \Diamond \neg^{A} a$ and $\Diamondblack: A\to A^\delta$ denote the c-slanted operator defined by the assignment $a\mapsto \neg^{A^\delta} \blacksquare \neg^{A} a$. It is straightforward to show that $\Diamondblack a\leq b$ iff $a\leq\Box b$ for every $a, b\in A$.
\end{definition}

\begin{lemma}\label{lem_Adelta_tense} If $\bbs = (A, \Diamond, \blacksquare)$ is a tense slanted BAE, then its canonical extension $\bbs^\delta =(A^\delta, \Diamond^\delta, \blacksquare^\delta)$ is a perfect tense BAE.
\end{lemma}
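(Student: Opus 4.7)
The plan is to show that $\bbs^\delta = (A^\delta, \Diamond^\delta, \blacksquare^\delta)$ is a perfect tense BAE by reducing everything to the tense adjunction on $A$ and invoking Lemma~\ref{lemma:basic properties of extensions of slanted}. Since $A$ is a Boolean algebra, $A^\delta$ is a complete atomic Boolean algebra, hence a perfect Boolean algebra. By Lemma~\ref{lemma:basic properties of extensions of slanted}, $\Diamond^\delta = \Diamond^\sigma$ is monotone and completely join-preserving, while $\blacksquare^\delta = \blacksquare^\pi$ is monotone and completely meet-preserving. It thus only remains to establish the tense adjunction
\[\Diamond^\sigma u \leq v \iff u \leq \blacksquare^\pi v\]
for all $u, v \in A^\delta$, which we plan to transfer outwards from $A$ in three stages.

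Stage 1 is the observation that $\Diamond^\sigma$ and $\blacksquare^\pi$ agree with $\Diamond$ and $\blacksquare$ on $A$: for $a \in A$, since $\Diamond$ is monotone, the meet $\Diamond^\sigma a = \bigwedge\{\Diamond c : c \in A, a \leq c\}$ is attained at $c = a$, so $\Diamond^\sigma a = \Diamond a$, and dually $\blacksquare^\pi a = \blacksquare a$. Thus the slanted adjunction on $A$ reads $\Diamond^\sigma a \leq b \iff a \leq \blacksquare^\pi b$ for all $a, b \in A$.

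Stage 2, the core of the argument, is to establish the adjunction on $\kbbas \times \obbas$. The key observation is that the family $\{\Diamond a : a \in A, k \leq a\}$ is down-directed (because $\{a \in A : k \leq a\}$ is closed under binary meets and $\Diamond$ is monotone) and consists of closed elements; dually, $\{\blacksquare b : b \in A, b \leq o\}$ is up-directed and consists of open elements. Assuming $\Diamond^\sigma k \leq o$ and expressing each closed $\Diamond a$ as a meet of elements of $A$, compactness together with down-directedness lets us collapse $\bigwedge\{\Diamond a : k \leq a\} \leq o$ to a single witness: some $a \in A$ with $k \leq a$ and $\Diamond a \leq o$. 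A second invocation of compactness, applied to the closed $\Diamond a$ below the open $o$, produces $b \in A$ with $\Diamond a \leq b \leq o$; the slanted adjunction on $A$ then yields $a \leq \blacksquare b$; and since $b \leq o$, the element $\blacksquare b$ appears in the defining join of $\blacksquare^\pi o$, whence $k \leq a \leq \blacksquare b \leq \blacksquare^\pi o$. The converse direction is symmetric via the up-directedness of $\{\blacksquare b : b \leq o\}$. We expect this compactness chase to be the main obstacle, as it exploits directedness precisely to collapse the defining meet (resp.~join) of parametrized closed (resp.~open) elements to a single member before one can invoke the adjunction on $A$.

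Stage 3 bootstraps to arbitrary $u, v \in A^\delta$ using denseness together with the complete join-preservation of $\Diamond^\sigma$ and the complete meet-preservation of $\blacksquare^\pi$. Writing $u = \bigvee\{k \in \kbbas : k \leq u\}$ and $v = \bigwedge\{o \in \obbas : v \leq o\}$, the inequality $\Diamond^\sigma u \leq v$ is equivalent to $\Diamond^\sigma k \leq o$ for all closed $k \leq u$ and open $o \geq v$; by Stage~2 this is in turn equivalent to $k \leq \blacksquare^\pi o$ for all such $k, o$, which unfolds back to $u \leq \blacksquare^\pi v$, completing the proof.
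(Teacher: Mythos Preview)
Your proposal is correct and follows essentially the same approach as the paper: establish the adjunction first for $k$ closed and $o$ open via compactness and the tense condition on $A$, then bootstrap to arbitrary $u,v$ by denseness. Your Stage~2 splits into two compactness applications (first collapse to a single $a$ with $\Diamond a \leq o$, then interpolate $b\in A$ with $\Diamond a \leq b \leq o$) what the paper does in one step, extracting $a_0,b_0\in A$ with $\Diamond a_0\leq b_0$ directly from $\bigwedge\{\Diamond a: k\le a\}\le \bigvee\{b\in A: b\le o\}$; your version is slightly more explicit about why compactness applies to a down-directed family of closed elements, which is a fair point since the $\Diamond a$ are not in $A$.
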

\begin{proof}
Let $k \in K(A^\delta)$ and $o \in O(A^\delta)$ such that $\Diamond^\sigma k \leq o$, that is $ \bigwedge \lbrace \Diamond a \mid k \leq a \in A \rbrace \leq \bigvee \lbrace b \in A \mid b \leq o \rbrace$. By compactness and the monotonicity of $\Diamond$, this implies that $\Diamond a_0 \leq b_0$ for some $a_0 \geq k$ and $b_0 \leq o$. So, by adjunction,  $a_0 \leq \blacksquare b_0$. Hence 
\[ k = \bigwedge \lbrace a \in A \mid k \leq a \rbrace \leq a_0 \leq \blacksquare b_0 \leq \bigvee \lbrace \blacksquare b \mid A \ni b \leq o \rbrace = \blacksquare^\pi o. \]
Let $u,v \in A^\delta$ such that $\Diamond^\delta u \leq v$. Then $\Diamond^\sigma k \leq o$, and hence (cf.~argument above) $k \leq \blacksquare^\pi o$, for all  $K(A^\delta)\ni k \leq u$ and all  $O(A^\delta)\ni o \geq v$. Therefore,  
\[ u = \bigvee \lbrace k \in K(A^\delta) \mid k \leq u \rbrace \leq \bigwedge \lbrace \blacksquare^\pi o \mid v \leq o \in O(A^\delta) \rbrace = \blacksquare^\delta o, \] as required.
Dually, one shows that $u \leq \blacksquare^\delta v$ implies $\Diamond^\delta u \leq v$ for all $u,v \in A^\delta$, which completes the proof that $\bbs^\delta$ is a tense algebra.
\end{proof}

Subordination algebras can be equivalently presented as  tense slanted BAEs as follows: 
\begin{definition}
\label{def:sub-algebras and slanted BAEs}
For every subordination algebra $\bbs = (A, \prec)$, its associated tense slanted BAE is $\bbs^*: = (A, \Diamond_{\prec}, \blacksquare_\prec)$ where $\Diamond_{\prec}: A\to A^\delta$ is defined by the assignment $a\mapsto \bigwedge \{b\in A\mid a\prec b\}\in K(A^\delta)$ and $\blacksquare_\prec : A \rightarrow A^\delta$  by the assignment $a \mapsto \bigvee\lbrace b \in A \mid b \prec a \rbrace \in O(A^\delta)$; for every tense slanted BAE $\bba = (A, \Diamond, \blacksquare)$, its associated subordination algebra is $\mathbb{A}_*: = (A, \prec_\Diamond)$, where $a \prec_\Diamond b$ iff $\Diamond a\leq b$ iff $a\leq\blacksquare b$.
\end{definition} 

Notice that the defining assignments of $\Diamond^{+}$ and $\Diamond_{\prec}$ (resp.~of $\blacksquare^+$ and $\blacksquare_\prec$) are verbatim `the same' (however,  the meets and joins are taken in different algebras) but the functional types of $\Diamond^{+}$ and $\Diamond_{\prec}$ (resp.~of $\blacksquare^+$ and $\blacksquare_\prec$) are different.

\begin{prop}
\label{prop:subordination and slanted}
For every subordination algebra $\bbs = (A, \prec)$ and every tense slanted BAE $\bba = (A, \Diamond, \blacksquare)$,
\begin{enumerate}
\item $\bbs^*$ is a tense slanted BAE, and if $\bbs$ is perfect, then $\bbs^+$ is a perfect tense BAE in the standard sense;
\item $\bba_*$ is a subordination algebra, and if $\bba$ is a perfect  tense BAE in the standard sense, then $\bba_+$ is a perfect subordination algebra;
\item $(\bbs^*)_*= \bbs$ and if $\bbs$ is perfect, then $(\bbs^+)_+= \bbs$;
\item $(\bba_*)^*= \bba$ and if $\bba$ is perfect, then $(\bba_+)^+= \bba$;
\item $(\Diamond_{\prec})^\delta = \Diamond_{\prec^\delta}$ and $(\blacksquare_{\prec})^\delta = \blacksquare_{\prec^\delta}$;
\item $\prec_{\Diamond^\delta} = ({\prec_\Diamond})^\delta$;
\item $(\bbss)^+ = (\bbs^*)^\delta$;
\item $(\bba^\delta)_+ = (\bba_*)^\delta$;
\item if $\bbs$ is perfect, then $(\bbs^\delta)^+ = (\bbs^+)^\delta$;
\item if $\bba$ is perfect, then $(\bba^\delta)_+ = (\bba_+)^\delta$.
\end{enumerate}
\end{prop}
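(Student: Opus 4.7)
The plan is to dispatch the ten items in three groups. Items 1--4 establish that the four constructions are well-defined and form mutually inverse bijections between the relevant classes of algebras; items 5--6 establish the key compatibility between the canonical extensions of slanted operations/relations and the back-and-forth translations; and items 7--10 then follow by combining the previous items.

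For item 1, I would first observe that $\Diamond_\prec a$ lies in $\kbbas$ and $\blacksquare_\prec a$ lies in $\obbas$ because, by axioms S2--S4, $\prec(a,-)$ is a filter and $\prec(-,a)$ is an ideal of $A$. The identity $\Diamond_\prec 0 = 0$ follows from $0 \prec 0$ (axiom S1); finite join-preservation in the form $\Diamond_\prec(a\vee b) = \Diamond_\prec a \vee \Diamond_\prec b$ reduces, via the distributive laws available in $A^\delta$, to the set-theoretic identity $\prec(a\vee b,-) = \prec(a,-)\cap \prec(b,-)$, which in turn follows from S2 and S4. The dual argument handles $\blacksquare_\prec$. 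The adjunction $\Diamond_\prec a \leq b \Leftrightarrow a \leq \blacksquare_\prec b \Leftrightarrow a\prec b$ (for $a, b\in A$) reduces, via compactness, to applications of S2, S3 and S4. The ``perfect'' half of item 1 is analogous, using S2$^\infty$ and S3$^\infty$ in place of compactness. Item 2 is order-dual, consisting of a routine check that $\prec_\Diamond$ satisfies S1--S4 (with S2$^\infty$ and S3$^\infty$ when $\bba$ is perfect). Items 3 and 4 are then settled by unfolding definitions.

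The technical heart is item 5: to show that $(\Diamond_\prec)^\sigma = \Diamond_{\prec^\delta}$. For a closed element $k\in \kbbas$, the $\sigma$-extension computes $(\Diamond_\prec)^\sigma(k) = \bigwedge\{\,\bigwedge\prec(a,-) \mid A\ni a \geq k\,\}$, while by definition $\Diamond_{\prec^\delta}(k) = \bigwedge\{v \in A^\delta \mid k\prec^\delta v\}$; by denseness the latter meet can be restricted to $v\in \obbas$, and then both meets can be shown equal to $\bigwedge\{b\in A \mid \exists a\in A\,(k\leq a \text{ and } a\prec b)\}$ by using clause 1 of Definition \ref{def: complete perfect SA} together with compactness of $k$ against the opens witnessing $\prec^\delta$. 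For arbitrary $u\in A^\delta$, the two-step $\sigma$-extension recipe matches clause 2 of Definition \ref{def: complete perfect SA} via denseness. The identity for $\blacksquare_\prec$ is order-dual, and item 6 follows a parallel pattern, unfolding the closed/open characterisation of $({\prec_\Diamond})^\delta$ against the two-step definition of $\Diamond^\delta$.

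Items 7--10 then drop out: for instance, for item 7, the tense BAE $(\bbss)^+$ has modal operations $\Diamond_{\prec^\delta}$ and $\blacksquare_{\prec^\delta}$ on $A^\delta$, which by item 5 coincide with $(\Diamond_\prec)^\delta$ and $(\blacksquare_\prec)^\delta$, namely the operations of $(\bbs^*)^\delta$; items 8--10 are analogous compositions. The main obstacle I expect is item 5: the topological definition of $\prec^\delta$ (split into closed/open and arbitrary clauses) and the two-stage $\sigma$-extension of $\Diamond_\prec$ are structurally parallel, but matching them precisely requires disciplined bookkeeping of compactness, denseness, and the way meets and joins interact across the four layers $A$, $\kbbas$, $\obbas$, and $A^\delta$. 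Once item 5 and its order-dual (plus item 6) are in place, the remaining identities are essentially combinatorial.
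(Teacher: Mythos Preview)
Your proposal is correct and follows essentially the same approach as the paper: items 1--4 are handled by direct verification using S1--S4 and compactness, the technical core is item 5 (and its dual 6), proved by first establishing the identity on closed/open elements via compactness and the two-clause definition of $\prec^\delta$ and then extending by denseness, and items 7--10 are then formal consequences. Your presentation of item 5 via the common intermediate form $\bigwedge\{b\in A \mid \exists a\in A\,(k\leq a \text{ and } a\prec b)\}$ is a slight repackaging of the paper's two-inequality argument, but the underlying mechanism is identical.
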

\begin{proof}
\begin{enumerate}
\item By construction, $\Diamond_\prec$ and $\blacksquare_\prec$ are c-slanted and o-slanted respectively. Hence, it is enough to show that they are normal and satisfy the tense condition. The identities $\Diamond_\prec 0 = 0 $ and $\blacksquare_\prec 1 = 1$ follow directly from S1. Moreover, for any $a,b \in A$, axiom S4 implies that ${\prec}(a,-) \cup {\prec}(b,-) \supseteq {\prec}(a\vee b,-)$, which implies that $\Diamond_\prec a \vee \Diamond_\prec b \leq \Diamond_\prec (a\vee b)$. Conversely, $ \Diamond_\prec a \vee \Diamond_\prec b = \bigwedge \lbrace c \vee d \mid a \prec c \text{ and } b \prec d \rbrace$. From S2 and S4, if $a \prec c$ and $b \prec d$ then $ a \vee b \prec c \vee d$. Hence, $\Diamond_\prec (a \vee b) \leq \Diamond_\prec a \vee \Diamond_\prec b$, as required. Similarly, one  shows that $\blacksquare_{\prec}(a \wedge b) = \blacksquare_{\prec} a \wedge \blacksquare_{\prec} b$. Finally, for every $a,b \in A$, 
\begin{equation}\label{eq_tense}
\Diamond_\prec a \leq b\quad  \text{ iff } \quad a \prec b\quad \text{ iff } \quad a \leq \blacksquare_\prec b.
\end{equation}
 Indeed, by construction, $a \prec b$ implies $\Diamond_\prec a \leq b$ and $a \leq \blacksquare_\prec b$. Moreover, if $\Diamond_\prec a \leq b$, then compactness and the definition of $\Diamond_\prec$ imply that $a \prec c \leq b$ for some $c \in A$, which implies $a \prec  b$ by S4.  Similarly, one shows that $a \leq \blacksquare_\prec b$ implies $a \prec b$, which completes the proof that $\Diamond_\prec$ and $\blacksquare_\prec$ satisfy the tense condition.

 The proof of the second part of the statement (when $\bbs$ is perfect) is  similar with a slight difference: the equivalence \eqref{eq_tense} arises from the completeness of $\prec$ rather than from the compactness of $A^\delta$. Indeed, $\Diamond^+ a \leq b$ implies that $a \prec \bigwedge \lbrace c \in A \mid a \prec c \rbrace = \Diamond^+a \leq b$, which implies $a \prec  b$ by S4.
 \item It is routine to show that $\prec_\Diamond$ (resp. $\prec_+$) satisfies conditions S1 to S4 (resp. their infinitary versions).
\item By definition, the underlying Boolean algebras of $\bbs$ and $(\bbs^*)_*$ (resp. $(\bbs^+)_+$ if $\bbs$ is perfect) are identical. Moreover, equivalences \eqref{eq_tense}, already proven in item 1, 
imply that the subordination relations of $\bbs$ and $(\bbs^*)_*$ (resp.~$(\bbs^+)_+$)  coincide.
\item The tense BAEs $(\bba_*)^*$ and $\bba$ share the same underlying Boolean algebra. Hence, $(\bba_*)^* = \bba$ if and only if $\Diamond_{\prec_\Diamond} a: =  \bigwedge \lbrace b \in A \mid \Diamond a \leq b \rbrace  = \Diamond a $, and $ \blacksquare_{\prec_\Diamond} a := \bigvee \lbrace b \in A \mid \Diamond b \leq a \rbrace= \bigvee \lbrace b \in A \mid b \leq \blacksquare a \rbrace = \blacksquare a $. These identities immediately follow from  $\Diamond a\in K(A^\delta)$ and $\blacksquare a\in O(A^\delta)$.

For the perfect case, the equalities $\Diamond a = \Diamond_{\prec_\Diamond} a$ and $\blacksquare a = \blacksquare_{\prec_\Diamond} a$ are trivially verified, since $\Diamond a$ and $\blacksquare a$ are elements of $A$ and the infimum and supremum are taken in $A$ itself. 
\item Let  us preliminarily show that $(\Diamond_\prec)^\sigma k = \Diamond_{\prec^\delta}k$ for any $k\in K(A^\delta)$. 
In order to show that $\bigwedge\lbrace u \in A^\delta \mid k \prec^\delta u \rbrace = :\Diamond_{\prec^\delta} k \leq (\Diamond_\prec)^\sigma k: = \bigwedge \lbrace \Diamond_\prec a \mid a \in A \text{ and } k \leq a\rbrace$ it is enough to show that $k \prec^\delta \Diamond_\prec a$ for all $a\in A$ such that $k \leq a$. Since $k$ is closed, by definition (cf.~item 2 of Definition \ref{def: complete perfect SA}) this is equivalent to showing that $k \prec^\delta o$ for every $o\in O(A^\delta)$  such that $\Diamond_\prec a \leq o$. By compactness, 
$\Diamond_\prec a: = \bigwedge \lbrace b \in A \mid a \prec b \rbrace  \leq o$ implies that $b_1 \wedge \ldots \wedge b_n \leq o$ for some $b_1,\ldots,b_n \in {\prec}(a,-)$. Hence, by axiom S3,
$ k \leq a \prec b_1 \wedge \ldots \wedge b_n \leq o$, which shows that $k \prec^\delta o$, as required.

Conversely, note first that, by denseness, 
\[ \Diamond_{\prec^\delta} k :=\bigwedge \lbrace u \in A^\delta \mid k \prec^\delta u \rbrace = \bigwedge \lbrace o \in O(A^\delta) \mid k \prec^\delta o \rbrace.\] 
Hence, to prove $(\Diamond_\prec)^\sigma k\leq \Diamond_{\prec^\delta} k$, it is enough to show that $(\Diamond_\prec)^\sigma k \leq o$ for every $o \in O(A^\delta)$  such that $k \prec^\delta o$. For such an $o$, by definition, $k \leq a \prec b \leq o$ for some $a,b \in A$. Hence, by definition,  $(\Diamond_\prec)^\sigma k \leq \Diamond_\prec a \leq b \leq o$, as required. 
The identity $(\Diamond_\prec)^\sigma u = \Diamond_{\prec^\delta}u$ for all $u \in A^\delta$ follows straightforwardly from 
 $(\Diamond_\prec)^\sigma k = \Diamond_{\prec^\delta}k$ for all $k \in K(A^\delta)$ using the denseness of $A^\delta$ and the complete join-preservation of $\Diamond_{\prec^\delta}$ and $(\Diamond_\prec)^\delta$.

Dually, one shows that $(\blacksquare_\prec)^\pi o = \blacksquare_{\prec^\delta} o$ for all $o \in O(A^\delta)$ and therefore, $(\blacksquare_\prec)^\pi u = \blacksquare_{\prec^\delta} u$ for all $u \in A^\delta$.
\item Let us preliminarily show that $k \prec_{\Diamond^\delta} o$ iff $k \mathrel{(\prec_\Diamond)^\delta} o$ for every  $k \in K(A^\delta)$ and $o \in O(A^\delta)$.
If $k \prec_{\Diamond^\delta} o$, that is \[\bigwedge \lbrace \Diamond a \mid a \in A \text{ and } k \leq a \rbrace =:  \Diamond^\delta k\leq o = \bigvee \lbrace b \in A \mid b \leq o \rbrace, \]
then, by compactness and since $\Diamond$ is monotone,  $\Diamond a \leq b$ (i.e.~$a\prec_{\Diamond} b$) for some $a \in A$ and $b \in A$ such that $k \leq a$ and $b \leq o$.  Hence, $k \mathrel{(\prec_\Diamond)^\delta} o$. Conversely, if $k \mathrel{(\prec_\Diamond)^\delta} o$, i.e.~if $\Diamond a \leq b$ for some $a,b \in A$ such that $k \leq a$  and $b \leq o$, then $\Diamond^\delta k\leq \Diamond a \leq b\leq o$, which yields $k \prec_{\Diamond^\delta} o$, as required.
Let us  show that $u \prec_{\Diamond^\delta} v$ iff $u \mathrel{(\prec_\Diamond)^\delta} v$ for all  $u,v \in A^\delta$. We have 

\begin{align}
& u \mathrel{(\prec_\Diamond)^\delta} v \label{eqeq1} \\
\iff & k \mathrel{(\prec_\Diamond)^\delta} o \text{ for any $k \in K(A^\delta)$ and $ o \in O(A^\delta)$ such that $k \leq u $ and $ v \leq o$ } \label{eqeq2}\\
\iff & \text{$k \prec_{\Diamond^\delta} o$ for any $k \in K(A^\delta)$ and $ o \in O(A^\delta)$ such that $k \leq u $ and $ v \leq o$} \label{eqeq3} \\
\iff & \text{$\bigvee \lbrace k \in K(A^\delta) \mid k \leq u \rbrace \prec_{\Diamond^\delta} \bigwedge \lbrace o \in O(A^\delta) \mid v \leq o \rbrace $} \label{eqeq4} \\
\iff & u  \prec_{\Diamond^\delta}  v\label{eqeq5}
\end{align}
where $\eqref{eqeq1} \iff \eqref{eqeq2}$ is the definition of $(\prec_\Diamond)^\delta$, $\eqref{eqeq2} \iff \eqref{eqeq3}$ is obtained via the preliminary claim, $\eqref{eqeq3} \iff \eqref{eqeq4}$ follows from axioms S2$^\infty$  and S3$^\infty$ for $\prec_{\Diamond^\delta}$ (cf.~item 2) and finally $\eqref{eqeq4} \iff \eqref{eqeq5}$ is denseness.

\item and (8)  $A^\delta$ is the Boolean algebra  underlying  $(\bbs^\delta)^+$, $(\bbs^*)^\delta$, $(\bba^\delta)_+$ and $(\bba_*)^\delta$. Moreover, the modal operators of $(\bbs^\delta)^+$ and $(\bbs^*)^\delta$ are respectively $\Diamond_{\prec^\delta}$ and $\blacksquare_{\prec^\delta}$ and $(\Diamond_\prec)^\delta$ and $(\blacksquare_\prec)^\delta$ which coincide pairwise (cf.~item 5). Finally, the subordination relations of $(\bba^\delta)_+$ and $(\bba_*)^\delta$ are respectively $\prec_{\Diamond^\delta}$ and $(\prec_\Diamond)^\delta$ which coincide,  (cf.~item 6). 
\item[(9)] and (10) The proofs are relatively similar to the ones of the non-perfect case with slightly different justifications: as in item 1, the completeness of $\prec$, $\Diamond$ and $\blacksquare$ is used instead of the compactness of $A^\delta$. As an example, we prove item 9 and leave item 10 to the reader. As remarked above, $(\bbs^\delta)^+$ and $(\bbs^+)^\delta$ have $A^\delta$ as their underlying Boolean algebras. Hence,  to finish the proof, let us show that the modal operators coincide. Since $(\Diamond^+)^\delta$ and $\Diamond_{\prec^\delta} $ are completely join-preserving, by denseness it enough to show that  for every $k \in K(A^\delta)$,  \[ \Diamond_{\prec^\delta} k := \bigwedge \lbrace o \in O(A^\delta) \mid  k \leq a \prec b \leq o \text{ for some } a,b \in A  \rbrace = \bigwedge \lbrace \Diamond^+ a \mid k \leq a \in A \rbrace := (\Diamond^+)^\delta k. \] If $k \leq a $, then $b:=\Diamond^+ a\in A\subseteq O(A^\delta)$ 
and $k \leq a \prec \Diamond^+ a \leq \Diamond^+ a$, which implies that $\Diamond_{\prec^\delta}k\leq (\Diamond^+)^\delta k$. Conversely, if $o \in O(A^\delta)$ is such that $k \leq a \prec b \leq o$ for some $a,b \in A$, then $\Diamond^+ a \leq b \leq o$ and hence $(\Diamond^+)^\delta k \leq \Diamond^+ a \leq b \leq o\leq \Diamond_{\prec^\delta} k$. Dually, one shows that $\blacksquare_{\prec^\delta} o = (\blacksquare^+)^\delta o$ for every $o \in O(A^\delta)$, which is enough to prove that $\blacksquare_{\prec^\delta} $ and $ (\blacksquare^+)^\delta $ coincide.
\end{enumerate}
\end{proof}

\begin{remark}\label{rem_tense_open_closed}In Proposition \ref{prop:subordination and slanted}, we showed that subordination algebras can be equivalently presented as tense slanted BAEs. 
In fact, 
subordination algebras can be also equivalently presented  both as  slanted BAEs of the form $\bba_c =(A, \Diamond)$ (which we refer to as {\em closed} slanted BAEs), and as slanted BAEs of the form $\bba_o = (A, \blacksquare)$ (which we refer to as {\em open} slanted BAEs). Hence,   closed, open and tense slanted BAEs  are all equivalent presentations. These equivalences can of course be described without using subordination algebras as mediators. Namely, a slanted tense BAE $\bba = (A, \Diamond, \blacksquare)$ is mapped to the closed slanted BAE $\bba_c = (A, \Diamond)$ while a closed slanted BAE $\bba = (A, \Diamond)$ is mapped to the tense algebra $\bba_t = (A, \Diamond, \blacksquare_\Diamond)$ where $\blacksquare_\Diamond$ is the restriction to $A$ of the adjoint of $\Diamond^\delta$. The equivalence between tense and open BAEs is defined similarly.
\end{remark}

\begin{remark} 


Open slanted BAEs (in the sense of Remark \ref{rem_tense_open_closed}) are isomorphic to the quasi-modal algebras developed by Celani in \cite{Quasimodal}. Recall that a \emph{quasi-modal algebra} is a pair $\mathbb{Q} =(B, \Delta)$ where $B$ is a Boolean algebra, $\Delta: B\to \mathcal{I}(B)$, where $\mathcal{I}(B)$ denotes the set of the ideals of $B$, satisfying the following conditions: $\Delta(a \wedge b) = \Delta a \cap \Delta b$ and $\Delta 1 = A$. It is then clear that the order-isomorphism between the ideals of $B$ and open elements of $B^\delta$ (cf.~\cite[Theorem 2.5]{DUGePa2005}) can be used to establish an equivalence between quasi-modal algebras and open slanted BAEs (see Example \ref{Exem:gen_imp}). But this equivalence is not surprising, given that subordination algebras and quasi-modal algebras are known to be equivalent (cf.~e.g.~\cite[Theorem 15]{Celani}).
\end{remark}

Let $\mathcal{L} = \mathcal{L}(\mathcal{F}, \mathcal{G})$ be the BAE language such that $\mathcal{F} = \{\Diamond,  \Diamondblack\}$ and $\mathcal{G} = \{\square, \blacksquare\}$, all modal connectives being unary and positive.
Satisfaction and validity of $\mathcal{L}$-formulas/inequalities on subordination algebras can be defined in terms of  Definition \ref{def: perfect sub-algebras and perfect BAEs} as follows:
\begin{definition}
For every subordination algebra $\bbs = (A, \prec)$ every assignment $v:\mathsf{PROP}\to A$, and every modal inequality $\phi\leq\psi$,
\[(\bbs, v)\models \phi\leq\psi \quad\mbox{ iff }\quad ((\bbs^\delta)^+, e\cdot v)\models \phi\leq\psi \]
where $e: A \to A^\delta$ is the canonical embedding. As to validity,
\[\bbs\models \phi\leq\psi \quad\mbox{ iff }\quad (\bbs^\delta)^+\models_{\bbs} \phi\leq\psi. \]
\end{definition} 

\begin{prop}
\label{prop:validity subordination and slanted}
For every (perfect) subordination algebra $\bbs = (A, \prec)$ every slanted (resp.~perfect) BAE $\bba = (A, \Diamond)$, and every  $\mathcal{L}$-inequality $\phi\leq\psi$,
\begin{enumerate}
\item $\bbs\models \phi\leq\psi\quad$ iff $\quad\bbs^*\models \phi\leq\psi$;
\item $\bba\models \phi\leq\psi\quad$ iff $\quad\bba_*\models \phi\leq\psi$;
\item $\bbs\models \phi\leq\psi\quad$ iff $\quad\bbs^+\models \phi\leq\psi$;
\item $\bba\models \phi\leq\psi\quad$ iff $\quad\bba_+\models \phi\leq\psi$.
\end{enumerate}
\end{prop}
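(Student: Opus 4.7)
My plan is to deduce all four biconditionals by unfolding the definitions of validity on subordination algebras, slanted BAEs, and perfect tense BAEs, and invoking the identifications already established in Proposition \ref{prop:subordination and slanted}, in particular items 4 and 7--10. A unifying observation is that each pair of algebras related by one of the constructors $(-)^*$, $(-)_*$, $(-)^+$, $(-)_+$ shares the same underlying Boolean algebra $A$, so that ``admissible assignment'' refers uniformly to an arbitrary map $\mathsf{PROP}\to A$ on either side.

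For item~1, I would unfold the definition of validity on $\bbs$: $\bbs\models\phi\leq\psi$ iff $(\bbs^\delta)^+\models_{\bbs}\phi\leq\psi$. By item~7 of Proposition~\ref{prop:subordination and slanted}, $(\bbs^\delta)^+ = (\bbs^*)^\delta$, and by the shared-Boolean-algebra observation the relativization to $\bbs$ coincides with the relativization to $\bbs^*$. Hence the condition rewrites to $(\bbs^*)^\delta\models_{\bbs^*}\phi\leq\psi$, which is $\bbs^*\models\phi\leq\psi$ by Definition~\ref{def:slanted satisfaction and validity}. For item~2, I would proceed dually: $\bba_*\models\phi\leq\psi$ iff $((\bba_*)^\delta)^+\models_{\bba_*}\phi\leq\psi$; by item~8, $(\bba_*)^\delta=(\bba^\delta)_+$, and since $\bba^\delta$ is a perfect tense BAE by Lemma~\ref{lem_Adelta_tense}, the perfect case of item~4 of Proposition~\ref{prop:subordination and slanted} yields $((\bba^\delta)_+)^+=\bba^\delta$. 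The remaining condition $\bba^\delta\models_{\bba_*}\phi\leq\psi$ coincides with $\bba^\delta\models_\bba \phi\leq\psi$, i.e.\ with $\bba\models\phi\leq\psi$.

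For items~3 and~4 (the perfect cases), the strategy is the same, further simplified by the fact that the canonical extension of a perfect algebra is canonically isomorphic to the algebra itself. In the case of item~3, $\bbs^\delta\cong \bbs$ yields $(\bbs^\delta)^+\cong\bbs^+$ directly (this is also consistent with item~9 of Proposition~\ref{prop:subordination and slanted}, since $(\bbs^+)^\delta\cong \bbs^+$); hence $\bbs\models\phi\leq\psi$ iff $\bbs^+\models_\bbs\phi\leq\psi$, and as all assignments into the underlying Boolean algebra are admissible for a standard tense BAE, this is just $\bbs^+\models\phi\leq\psi$. In the case of item~4, $\bba_+$ is a perfect subordination algebra, so $(\bba_+)^\delta\cong\bba_+$ and therefore $\bba_+\models\phi\leq\psi$ iff $(\bba_+)^+\models_{\bba_+}\phi\leq\psi$; the perfect case of item~4 of Proposition~\ref{prop:subordination and slanted} then identifies $(\bba_+)^+$ with $\bba$, whence the condition becomes $\bba\models_\bba\phi\leq\psi$, which is $\bba\models\phi\leq\psi$ by standard validity.

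The main obstacle, insofar as there is one, is bookkeeping rather than conceptual: one must check at each step that the shared underlying Boolean algebra really allows the admissible-assignment parameter to be transferred between the paired structures, and that each canonical-extension identity from Proposition~\ref{prop:subordination and slanted} is applied to a structure in its intended domain (a subordination algebra for items~7 and~9, a slanted BAE for items~8 and~10, perfect on both sides for items~9 and~10). Once these small compatibilities are verified, all four cases reduce to a direct substitution of the relevant identity.
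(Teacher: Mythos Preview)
Your proposal is correct and follows essentially the same approach as the paper. The paper only spells out item~1 (unfolding the definition of validity on $\bbs$, invoking item~7 of Proposition~\ref{prop:subordination and slanted}, and using that $\bbs$ and $\bbs^*$ share the same underlying Boolean algebra) and dismisses items~2--4 with ``proved similarly''; you have filled in those details. The one minor variation is that for items~3 and~4 you appeal to the fact that a perfect algebra is (isomorphic to) its own canonical extension, whereas the more uniform route (parallel to items~1 and~2) would simply invoke items~9 and~10 of Proposition~\ref{prop:subordination and slanted} directly; you already note this connection, and the two arguments are equivalent.
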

\begin{proof}
For item 1, we recall that $\bbs \models \phi \leq \psi$ if and only if $(\bbs^\delta)^+ \models_{\bbs} \phi \leq \psi$. We also recall that, by Proposition \ref{prop:subordination and slanted}, we have  $(\bbss)^+ = (\bbs^*)^\delta$. Hence, we have $\bbs \models \phi \leq \psi$ if and only if $(\bbs^*)^\delta \models_{\bbs} \phi \leq \psi$. The conclusion now follows from the fact that $\bbs$ and $\bbs^*$ have the same underlying Boolean algebra. Items 2 to 4 are proved similarly.
\end{proof}

\begin{prop}
\label{prop: reducing sub-canonicity to s-canonicity}
For every s-canonical $\mathcal{L}$-inequality $\phi\leq\psi$ and every subordination algebra $\bbs$,
\[\bbs\models\phi\leq\psi\quad \Leftrightarrow \quad\bbss\models \phi\leq\psi.\]
\end{prop}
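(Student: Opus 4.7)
The plan is to chain together the equivalences already established in Propositions \ref{prop:subordination and slanted} and \ref{prop:validity subordination and slanted} with the defining property of s-canonicity (Definition \ref{def:slanted canonicity}), using the tense slanted BAE $\bbs^*$ as the intermediate object.

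First, I would invoke Proposition \ref{prop:validity subordination and slanted}.1 to reduce validity on the subordination algebra $\bbs$ to validity on the associated tense slanted BAE, obtaining
\[
\bbs\models \phi\leq\psi \quad\Longleftrightarrow\quad \bbs^*\models \phi\leq\psi.
\]
Next, since $\bbs^*$ is a slanted $\mathcal{L}$-algebra and $\phi\leq\psi$ is assumed to be s-canonical, Definition \ref{def:slanted canonicity} (together with the trivial direction that validity on $(\bbs^*)^\delta$ implies admissible validity) yields
\[
\bbs^*\models \phi\leq\psi \quad\Longleftrightarrow\quad (\bbs^*)^\delta \models \phi\leq\psi.
\]

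Now I would use item 7 of Proposition \ref{prop:subordination and slanted}, which establishes the key identity $(\bbs^*)^\delta = (\bbss)^+$ of perfect tense BAEs. This rewrites the previous line as
\[
(\bbs^*)^\delta\models \phi\leq\psi \quad\Longleftrightarrow\quad (\bbss)^+ \models \phi\leq\psi.
\]
Finally, since the canonical extension $\bbss$ is a perfect subordination algebra (cf.\ the remark after Definition \ref{def: complete perfect SA}), I may apply Proposition \ref{prop:validity subordination and slanted}.3 to $\bbss$ to obtain
\[
(\bbss)^+\models \phi\leq\psi \quad\Longleftrightarrow\quad \bbss\models \phi\leq\psi.
\]
Concatenating the four equivalences gives the desired statement.

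There is essentially no obstacle: all the nontrivial work has already been absorbed into the equivalence between subordination algebras and tense slanted BAEs established in Proposition \ref{prop:subordination and slanted} (especially item 7, which commutes the two constructions $(-)^+$ and $(-)^*$ with $(-)^\delta$) and into the main canonicity theorem (Theorem \ref{Thm:ALBA:Canonicity}) that supplies s-canonicity. The only point requiring care is to keep straight which variant of validity is being used at each step — in particular, that the s-canonicity step above is precisely the passage from admissible validity on $\bbs^*$ to full validity on the standard perfect algebra $(\bbs^*)^\delta$, which is what Definition \ref{def:slanted canonicity} delivers.
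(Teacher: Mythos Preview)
Your proof is correct and follows essentially the same route as the paper: reduce to $\bbs^*$ via Proposition~\ref{prop:validity subordination and slanted}.1, apply s-canonicity, and then return to $\bbss$ using the identification $(\bbs^*)^\delta = (\bbss)^+$ from Proposition~\ref{prop:subordination and slanted}.7. The only cosmetic difference is in the final step: you invoke Proposition~\ref{prop:validity subordination and slanted}.3 on the perfect subordination algebra $\bbss$, whereas the paper instead applies Proposition~\ref{prop:validity subordination and slanted}.4 to the perfect BAE $(\bbs^*)^\delta$ and then identifies $((\bbs^*)^\delta)_+$ with $\bbss$ via item~7 (implicitly together with item~3); both orderings of these two steps are equivalent.
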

\begin{proof} The argument can be summarized by means of the following diagram:
\begin{center}
\begin{tabular}{l c l}
$\bbs \ \ \models \phi \leq \psi$ & &$\ \ \ \ \ \ \ \bbss \models \phi \leq \psi$\\
&&$\ \ \ \ \ \ \ \ \ \ \ \Updownarrow$ \\
%
%
$ \ \ \ \ \ \Updownarrow$&& $((\bbs^*)^\delta)_+\models \phi\leq\psi$\\
&&$\ \ \ \ \ \ \ \ \ \ \ \Updownarrow$ \\
$\bbs^*\models \phi\leq\psi$
&\ \ \ $\Leftrightarrow$ \ \ \ &$(\bbs^*)^\delta\models\phi\leq\psi$
\end{tabular}
\end{center}
The bi-implication on the left is due to Proposition \ref{prop:validity subordination and slanted}.1; the horizontal bi-implication holds by assumption;  the lower bi-implication on the right is due to Proposition \ref{prop:validity subordination and slanted}.4;  the upper bi-implication on the right is due to Proposition \ref{prop:subordination and slanted}.7.
\end{proof}
Hence, as an immediate consequence Proposition \ref{prop: reducing sub-canonicity to s-canonicity} and Theorem \ref{Thm:ALBA:Canonicity}, we get the following
\begin{cor}
For every analytic inductive $\mathcal{L}$-inequality $\phi\leq\psi$ and every subordination algebra $\bbs$,
\[\bbs\models\phi\leq\psi\quad \Leftrightarrow \quad\bbss\models \phi\leq\psi.\]
\end{cor}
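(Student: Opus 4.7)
The proof is essentially immediate by combining two results already established in the excerpt, so the plan mainly consists of verifying that the hypotheses of each result are met and then chaining them together.

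First, I would note that the signature $\mathcal{L} = \mathcal{L}(\{\Diamond,\Diamondblack\}, \{\Box,\blacksquare\})$ introduced just before Proposition \ref{prop:validity subordination and slanted} is a normal LE-signature in the sense of Definition \ref{def:DLE} (all four connectives are unary with order-type $1$), so $\mathcal{L}$-inequalities are LE-inequalities and tense slanted BAEs are in particular slanted LEs. Thus Theorem \ref{Thm:ALBA:Canonicity} applies to any analytic inductive $\mathcal{L}$-inequality $\phi\leq\psi$ and yields that $\phi\leq\psi$ is s-canonical (cf.~Definition \ref{def:slanted canonicity}).

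Second, I would invoke Proposition \ref{prop: reducing sub-canonicity to s-canonicity}, whose sole hypothesis is precisely the s-canonicity of $\phi\leq\psi$. Applying it to the given subordination algebra $\bbs$ yields the desired bi-implication $\bbs\models\phi\leq\psi$ iff $\bbss\models\phi\leq\psi$. In effect the proof is nothing more than the one-line chain
\[
\text{analytic inductive} \;\stackrel{\text{Thm.~\ref{Thm:ALBA:Canonicity}}}{\Longrightarrow}\; \text{s-canonical} \;\stackrel{\text{Prop.~\ref{prop: reducing sub-canonicity to s-canonicity}}}{\Longrightarrow}\; \text{canonical w.r.t.~subordination algebras.}
\]

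There is no genuine obstacle, since all the conceptual work has already been done: the passage between subordination algebras and tense slanted BAEs has been shown to preserve and reflect validity of $\mathcal{L}$-inequalities (Proposition \ref{prop:validity subordination and slanted}) and to commute with taking canonical extensions (Proposition \ref{prop:subordination and slanted}, item 7, which identifies $(\bbss)^+$ with $(\bbs^*)^\delta$); Proposition \ref{prop: reducing sub-canonicity to s-canonicity} already bundles these together in the desired U-shaped diagram. The only minor point worth spelling out is that applying Theorem \ref{Thm:ALBA:Canonicity} to the slanted $\mathcal{L}$-algebra $\bbs^*$ (rather than to some abstract slanted LE) makes the horizontal bi-implication of that diagram legitimate, after which the bi-implications supplied by Proposition \ref{prop:validity subordination and slanted} on the left and by Propositions \ref{prop:validity subordination and slanted}.4 together with \ref{prop:subordination and slanted}.7 on the right close the argument.
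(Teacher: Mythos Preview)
Your proposal is correct and follows exactly the same route as the paper, which states the corollary as an immediate consequence of Proposition~\ref{prop: reducing sub-canonicity to s-canonicity} and Theorem~\ref{Thm:ALBA:Canonicity}. Your additional remarks about the signature being a normal LE-signature and the recap of the U-shaped diagram are accurate elaborations of what the paper leaves implicit.
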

Finally, we show that the corollary above strengthens  \cite[Corollary 3.8]{DeHA}, by verifying that sub-Sahlqvist $\mathcal{L}$-formulas are a proper subclass of analytic inductive $\mathcal{L}$-formulas.
\begin{definition} 
\label{def:s-Sahlqvist} 
\begin{enumerate}
\item An $\mathcal{L}$-formula  is \emph{closed} (resp.~\emph{open}) if it is built up from constants $\top$, $\bot$, propositional variables and their negations, by applying $\vee$, $ \wedge$, $\Diamond$ and $\Diamondblack$ (resp. $\vee$, $\wedge$, $\square$ and $\blacksquare$).
\item An $\mathcal{L}$-formula  is \emph{positive} (resp.~\emph{negative}) if it is built up from constants $\top$, $\bot$ and propositional variables (resp.~negations of propositional variables) by applying $\wedge$, $\vee$, $\Diamond$, $\square$, $ \Diamondblack$ and $\blacksquare$.
\item An $\mathcal{L}$-formula  is \emph{sub-positive} (resp.~\emph{sub-negative}) if it is built up from closed positive formulas (resp.~open negative formulas) by applying $\vee$, $\wedge$, $\square$ and $\blacksquare$ (resp.~$\vee$, $\wedge$, $\Diamond$ and $\Diamondblack$).
\item A {\em boxed atom} is an $\mathcal{L}$-formula built up from propositional variables by applying $\square$ and $\blacksquare$.
\item An $\mathcal{L}$-formula  is \emph{strongly positive} if it is a conjunction of  boxed atoms. 
\item An $\mathcal{L}$-formula  is \emph{untied} if it is built up from strongly positive and sub-negative formulas using only $\wedge$, $\Diamond$ and $\Diamondblack$.
\item A \emph{sub-Sahlqvist formula} is an $\mathcal{L}$-formula of the form  $\psi[(\varphi_1 \rightarrow \varphi_2)/!x]$ where $\psi(!x)$ is a boxed atom, 
$\phi_1$ is untied, and $\phi_2$ is sub-positive. 
\end{enumerate}
\end{definition}
\begin{prop}
\label{prop:s-sahlqvist are analytic sahlqvist}
For every $\mathcal{L}$-formula $\phi$, letting $\epsilon$ denote the order-type constantly equal to 1,
\begin{enumerate}
\item If $\phi$ is closed (resp.~open), then $-\phi$ is PIA (resp.~Skeleton).
\item  If $\phi$ is positive (resp.~negative), then $-\phi$ (resp.~$+\phi$) is $\epsilon^\partial$-uniform.
\item If $\phi$ is closed positive (resp.~open negative), then $-\phi$ (resp.~$+\phi$) is  $\epsilon^\partial$-uniform PIA.
\item If $\phi$ is strongly positive, then $+\phi$ is  PIA, and each of its branches is excellent.
\item If $\phi$ is sub-positive (resp.~sub-negative), then  $-\phi$ (resp.~$+\phi$) is $\epsilon^\partial$-uniform and all of its branches are good.
\item If $\phi$ is untied, then $+\phi$ is analytic $\epsilon$-Sahlqvist. 
\item If $\phi$ is sub-Sahlqvist, then $-\phi$ is analytic $\epsilon$-Sahlqvist, hence so is $\top\leq \phi$.
\end{enumerate}
\end{prop}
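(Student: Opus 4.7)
The plan is to prove all seven items by induction on formula structure, using the node classification in Table \ref{Join:and:Meet:Friendly:Table:DLE} (applicable since BAEs are distributive). Two preliminary remarks are used throughout: (a) since the formulas of Definition \ref{def:s-Sahlqvist} are implicitly in negation normal form, an occurrence of $\neg p$ in $+\phi$ is to be read as a signed leaf $-p$ in the signed generation tree of Section \ref{Inductive:Fmls:Section} (and symmetrically for $-\phi$); (b) every connective $h \in \{\Diamond, \Diamondblack, \Box, \blacksquare\}$ is unary with $\epsilon_h(1) = 1$, so signs propagate unchanged from parent to child through these connectives.

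For items 1 and 2, a straightforward induction on $\phi$ suffices. In item 1, the internal nodes of $-\phi$ (for $\phi$ closed) are $-\wedge, -\vee, -\Diamond, -\Diamondblack$, each of which admits a PIA classification in Table \ref{Join:and:Meet:Friendly:Table:DLE} ($-\wedge$ as SRR, $-\vee$ as SRA, and $-\Diamond, -\Diamondblack$ as SRA since $n_f = 1$); the open case is order-dual. In item 2, sign propagation through the internal nodes of a positive (resp.~negative) formula is the identity, so every atomic leaf of $-\phi$ (resp.~$+\phi$) is negatively signed, yielding $\epsilon^\partial$-uniformity. Item 3 is just the conjunction of items 1 and 2.

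Item 4 follows by direct inspection: $+\phi$ for a strongly positive $\phi$ consists of a top $+\wedge$ node (classifiable as SRA) whose children are boxed atoms $+\boxdot_1\cdots+\boxdot_n\,+p$ comprising only SRA nodes and an $\epsilon$-critical leaf; hence every branch is entirely SRA, so excellent. For item 5, the outer layer of $-\phi$ (for $\phi$ sub-positive) is built from $-\wedge, -\vee, -\Box, -\blacksquare$, which admit Skeleton classifications ($-\wedge$ as $\Delta$-adjoint, $-\vee$ and the boxes as SLR), while each immediate subformula is closed positive, hence has PIA negative generation tree by item 3. So every branch of $-\phi$ decomposes as a PIA portion (inside a closed positive subformula, possibly trivial) followed by a Skeleton portion (the outer layer), i.e.~is good; $\epsilon^\partial$-uniformity follows from item 2. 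The sub-negative case is dual.

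The main work is in items 6 and 7. For item 6, the outer layer of $+\phi$ (for $\phi$ untied) uses only $+\wedge, +\Diamond, +\Diamondblack$, all admitting a Skeleton classification (SLR), and its immediate subformulas are either strongly positive or sub-negative. A critical branch terminates in a $+p$-leaf inside a strongly positive subformula; by item 4, the portion of the branch inside that subformula is fully SRA, and the outer portion is Skeleton, making the whole branch excellent. A non-critical branch terminates either inside a sub-negative subformula (by item 5, its portion there is good, and the outer Skeleton extends the Skeleton tail) or at a constant; in either case it is good. For item 7, I would analyse $-\phi$ for $\phi = \psi[(\phi_1 \to \phi_2)/!x]$ by unfolding $\phi_1 \to \phi_2$ as $\neg\phi_1 \vee \phi_2$, so that the hole $-x$ is replaced by $-\vee(+\phi_1, -\phi_2)$. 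The outer layer ($-\boxdot$ nodes from $\psi$, plus the replacing $-\vee$ classified as SLR) is all Skeleton; by item 6, $+\phi_1$ is analytic $\epsilon$-Sahlqvist; by items 2 and 5, $-\phi_2$ is $\epsilon^\partial$-uniform with all branches good (hence contributes no critical leaves). The expected main obstacle is the careful bookkeeping of the classification choices for the ambiguous $+\wedge$ and $-\vee$ nodes, which in the DLE setting admit both Skeleton and PIA classifications: one must consistently assign the Skeleton classification in the outer (post-transition) portion of each branch and the PIA classification in the inner (pre-transition) portion, to ensure that each branch factors as required by Definition \ref{def:good:branch}.
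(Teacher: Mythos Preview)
Your proposal is correct and follows essentially the same strategy as the paper's proof: both proceed by direct inspection of the node classifications (using the DLE table) and assemble items 3--7 from items 1--5 in the same order. The only noteworthy difference is that in item 7 you unfold $\phi_1 \to \phi_2$ as $\neg\phi_1 \vee \phi_2$ and treat the negation as a bare sign flip, whereas the paper treats $\to$ directly as a binary $g$-connective of order-type $(\partial,1)$, so that $-{\to}$ is an SLR node with children $+\phi_1$ and $-\phi_2$; the latter is slightly cleaner since it avoids introducing an unclassified negation node, but the two analyses are equivalent in the Boolean setting.
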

\begin{proof}
Items 1 and 2 immediately follow from the definitions involved. Item 3 is an immediate consequence of items 1 and 2. Item 4 immediately follows from item 3 and the definition of excellent branch. Item 5 follows from item 3 and the definition of a good branch. Item 6 follows from the fact that, by items 4 and 5, any untied formula is built up from positive PIA-formulas every branch of which is excellent and $\epsilon^\partial$-uniform formulas every branch of which is good, by applying Skeleton connectives. Clearly, this application will maintain both the good shape of $\epsilon^\partial$-critical branches, and the excellent shape of $\epsilon$-critical branches.
Finally, item 7 follows from the observation that if $\psi(!x)$ is a boxed atom, then $-\psi[(y \rightarrow z)/!x]$ is a Skeleton formula, and hence,  replacing the placeholder variable $z$ with the $\epsilon^\partial$-uniform formula $\phi_2$ all of the branches  of which are good, and the placeholder variable $x$ with the analytic $\epsilon$-Sahlqvist formula $\phi_1$ will yield again an analytic $\epsilon$-Sahlqvist formula.
\end{proof}

\section{Conclusions}
\label{sec:conclusions}
In the present paper, we have explored the {\em topological} properties of a class of LE-inequalities, the {\em analytic inductive inequalities}, which has been originally introduced in \cite{GMPTZ} as a concrete syntactic approximation of the {\em proof-theoretic} notion of analyticity in the context of proper display calculi \cite{wansing2013displaying}. The theoretical background in which this connection  between topological and proof-theoretic properties could be established is {\em unified correspondence theory} \cite{UnifCorresp}, which applies algebraic and duality-theoretic techniques in the development of (generalized) Sahlqvist correspondence and canonicity results for nonclassical logics, and which has recently established systematic connections between generalized Sahlqvist theory and the core issue in structural proof theory of identifying large classes of analytic axioms and algorithmically computing their corresponding analytic structural rules, yielding precisely  the notion of analytic inductive inequalities.   
The main result of the present paper is that the topological properties induced by the syntactic shape of analytic inductive LE-inequalities guarantee their algebraic canonicity in the setting of {\em slanted} LE-algebras of the appropriate signature (cf.~Definition \ref{def:slanted LE}). This canonicity result connects and extends a number of recent canonicity results in very different areas: subordination algebras, quasi-modal algebras, and the transfer of canonicity via G\"odel-McKinsey-Tarski translations.

\paragraph{Slanted LEs as a comprehensive mathematical environment} In this paper, we attributed a name to  a notion (that of {\em slanted operations}, cf.~Definition \ref{def:c-slanted o-slanted}, from which the ensuing notion of {\em slanted algebra} derives) instances of which have  cropped up in the literature in many contexts and with different angles, scopes, and motivations, spanning from the theory of (generalized) canonical extensions of maps \cite{GeJo04} and their adjoints \cite{PaSoZh15}, to de Vries algebras \cite{deVries} and their generalizations (in the equivalent forms of quasi-modal algebras, \cite{Quasimodal}, pre-contact algebras \cite{Precontact} and subordination algebras \cite{BezSouVe}), and the G\"odel-McKinsey-Tarski translation \cite{CoPaZh19}. While the connection with duality-theoretic aspects is very much present in each of these contexts taken separately, the environment of slanted algebras as defined in the present paper makes it possible to provide a purely algebraic, modular and uniform reformulation and generalization of extant results, and explore, as we have started to do, generalized settings, such as the (constructive) `non-distributive' one of the present paper, also paving the way towards their investigation with duality-theoretic and topological techniques on relational structures based e.g.~on polarities and reflexive graphs (cf.~\cite{conradie2020non}). This line of investigation is ongoing.

\paragraph{Equivalence, morphisms and duality} Related to the previous point, the environment of slanted LE lends itself naturally to be investigated with universal algebraic and category-theoretic tools, starting with the definition of slanted homomorphisms as lattice homomorphisms $h:\mathbb{A}_1\to \mathbb{A}_2$ such as the following diagrams commute for every $f\in \mathcal{F}$ and $g\in \mathcal{G}$:

\begin{center}
	\begin{tabular}{rcl c c r c l}
		 $ \bbas_1$ & $\stackrel{h^\delta}{\longrightarrow}$ & $\bbas_2$ && $\bbas_1$ & $\stackrel{h^\delta}{\longrightarrow}$ & $\bbas_2$ \\
		\footnotesize{$f^{\bba_1}$} $\uparrow \> \>$ && $\uparrow$ \footnotesize{$f^{\bba_2}$}&& \footnotesize{$g^{\bba_1}$}$\uparrow\> \>$ && $\uparrow$\footnotesize{$g^{\bba_2}$} \\
		$\bba_1^{\epsilon_f}$ & $\stackrel{h^{\epsilon_f}}{\longrightarrow}$ &$\bba_2^{\epsilon_f}$ && $\bba_1^{\epsilon_g}$ & $\stackrel{h^{\epsilon_g}}{\longrightarrow}$ &$\bba_2^{\epsilon_g}$
	\end{tabular}
\end{center}
This line of investigation is ongoing.

\paragraph{From normal to non-normal settings} Although the best-known examples of applications of the theory of canonical extensions (e.g.~\cite{Jonsson-sahlqvist}) concern logics in which the additional operations are all normal (i.e.~coordinatewise preserving or reversing all finite joins, for $f$-type operations, or meets, for $g$-type operations), the theory itself applies to arbitrary maps \cite{GeJo04}, and has already been applied to develop canonicity, correspondence and proof-theoretic results for non-normal logics in several settings, including the Boolean \cite{Jinsheng}, the distributive \cite{PaSoZh15r} and the general lattice  \cite{CoPa-constructive, CCPZ}. In the present paper, we have addressed slanted canonicity in the setting of {\em normal} slanted LEs, in the sense indicated above (see also the discussion after Definition \ref{def:c-slanted o-slanted}).  A further direction that can be naturally pursued  in this algebraic context concerns the development of (constructive) slanted canonicity results in the context of  {\em non-normal} slanted algebras. This direction invests the study of the notion of {\em weakening relation} \cite{moshier2016relational} as  generalized subordination, and its possible applications in obtaining semantic cut elimination results generalizing those in e.g.~\cite{greco2018algebraic}.

\section{Appendix}\label{appendix}

\subsection{Topological properties of slanted operations and their residuals}

Fix a language $\mathcal{L}_\mathrm{LE}$, and a {\em slanted} $\mathcal{L}_\mathrm{LE}$-algebra  $\bba = (A, \mathcal{F}^\bba, \mathcal{G}^\bba)$ for the remainder of this section. This subsection collects the relevant order-theoretic and  topological properties of the additional operations of $\bba$ and their adjoints, which will be critical for the proof of the ``topological versions'' of the Ackermann lemmas in Section \ref{Sec: topological Ackermann}. These results are the straightforward  generalization to the setting of slanted algebras of properties that are  
well known to hold in the setting of normal LEs (e.g.\ \cite[Section 10]{CoPa-nondist}). 
In what follows, we use the terminology $\partial$-monotone (respectively $\partial$-antitone, $\partial$-positive, $\partial$-negative, $\partial$-open, $\partial$-closed) to mean its opposite, i.e.~antitone (respectively, monotone, negative, positive, closed, open). By $1$-monotone (respectively antitone, positive, negative, open, closed) we simply mean monotone (antitone, positive, negative, open, closed). Also in symbols, for example we will write $(O(\bbas))^1$ for $O(\bbas)$ and $(O(\bbas))^\partial$ for $K(\bbas)$, and similarly $(K(\bbas))^1$ for $K(\bbas)$ and $K(\bbas)^\partial$ for $O(\bbas)$. This convention generalizes to order-types and tuples in the obvious way.  Thus, for example,  $(O(\bbas))^{\epsilon}$ is the cartesian product of sets with $O(\bbas)$ as $i$th coordinate where $\epsilon_i = 1$ and $K(\bbas)$ for $j$th coordinate where $\epsilon_j = \partial$.

\begin{lemma}
\label{cor: open upset to open upset for white box} For all $f\in \mathcal{F}^\bba$, $g\in \mathcal{G}^\bba$, $\overline{k}\in (K(\bbas))^{\epsilon_f}$, and $\overline{o}\in (O(\bbas))^{\epsilon_g}$,  
\begin{enumerate}
\item $g(\overline{ o}) \in O(\bbas)$,
\item $f(\overline{ k}) \in K(\bbas)$.
\end{enumerate}
\end{lemma}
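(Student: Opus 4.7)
The plan is to unfold the notational convention that, in the context of slanted algebras, symbols like $f$ and $g$ applied to elements of $\bbas$ actually refer to their $\sigma$- and $\pi$-extensions (cf.\ Definition \ref{def:canext slanted}). So the statement to prove amounts to showing that $f^\sigma$ sends tuples of $\epsilon_f$-closed elements to closed elements, and $g^\pi$ sends tuples of $\epsilon_g$-open elements to open elements.

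The key preliminary observation is that $\kbbas$ is closed under arbitrary (non-empty) meets and $\obbas$ under arbitrary joins. Indeed, by definition $\kbbas$ is the meet-closure of $e[A]$ in $\bbas$, so any family $\{k_i\}_{i\in I}\subseteq \kbbas$ can be written as $k_i = \bigwedge_{j\in J_i} a_{ij}$ with $a_{ij}\in A$, whence $\bigwedge_{i\in I} k_i = \bigwedge_{(i,j)} a_{ij}\in \kbbas$. The dual argument yields that $\obbas$ is closed under joins.

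For item 2, I would apply the first clause of the definition of $f^\sigma$ (cf.\ the formulas recalled before Lemma \ref{lemma:basic properties of extensions of slanted}): for $\overline{k}\in K({(\bbas)}^{\epsilon_f})$,
\[ f(\overline{k}) = f^\sigma(\overline{k}) = \bigwedge\{f(\overline{a})\mid \overline{a}\in A^{\epsilon_f}\text{ and } \overline{k}\leq^{\epsilon_f} \overline{a}\}. \]
Since $f\in \mathcal{F}^\bba$ is c-slanted by Definition \ref{def:slanted LE}, each $f(\overline{a})$ on the right-hand side lies in $\kbbas$. Hence $f(\overline{k})$ is a meet of closed elements, and by the preliminary observation it is itself closed. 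Item 1 follows dually from the definition of $g^\pi$ on $\epsilon_g$-open tuples, using that each $g(\overline{a})\in \obbas$ by o-slantedness, and that $\obbas$ is closed under joins.

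There is essentially no mathematical obstacle here; the only subtlety is bookkeeping about the identification $K((\bbas)^{\epsilon_f}) \cong \prod_i K(\bbas)^{\epsilon_f(i)}$ (and dually for open elements), which ensures that the ``admissible'' tuples $\overline{a}\in A^{\epsilon_f}$ with $\overline{k}\leq^{\epsilon_f}\overline{a}$ both exist (by denseness applied coordinatewise) and form a non-empty indexing set, so that the meet is a genuine, possibly infinite meet of closed elements rather than an empty meet.
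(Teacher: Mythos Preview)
Your proof is correct and takes essentially the same approach as the paper: unfold $g(\overline{o})$ and $f(\overline{k})$ via the defining formulas for $g^\pi$ and $f^\sigma$ on $\epsilon$-open/closed tuples, and observe that the resulting join/meet ranges over elements that are open/closed because the slanted maps target $\obbas$ and $\kbbas$ respectively. The paper's proof is more terse, but the argument is identical.
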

\begin{proof}
These facts straightforwardly follow from  the fact that each $f\in \mathcal{F}^{\bbas}$ (resp.\  $g\in \mathcal{G}^{\bbas}$) is the $\sigma$-extension  (resp.\ $\pi$-extension) of the corresponding operation in $\bba$: for instance, $g(\overline{ o})= g^{\pi}(\overline{ o})  = \bigvee \{ g(\overline{a}) \mid \overline{a} \in \bba^{\epsilon_g} \textrm{ and } \overline{a} \leq^{\epsilon_g} \overline{o} \}$, and  $g(\overline{a}) \in O(\bbas)$ for each $\overline{a} \in \bba^{\epsilon_g}$.
\end{proof}

\begin{remark}\label{rem: counterexample} In the standard setting  in which any $f\in \mathcal{F}^\bba$ and $g\in \mathcal{G}^\bba$ maps tuples of clopen elements to clopen elements, it also holds (cf.~\cite[Lemma 10.2]{CoPa-nondist}) that for all 
$\overline{k}\in (K(\bbas))^{\epsilon_f}$, and $\overline{o}\in (O(\bbas))^{\epsilon_g}$, 
\begin{enumerate}
\item If $\overline{o} \in (O(\bbas))^{\epsilon_g}$, then $f(\overline{ o}) \in O(\bbas)$,
\item If $\overline{k}\in (K(\bbas))^{\epsilon_f}$, then $g(\overline{ k}) \in K(\bbas)$.
\end{enumerate}
Clearly, these properties do not hold in  the setting of slanted LEs, as, together with Lemma \ref{cor: open upset to open upset for white box} they would imply that any slanted operation maps tuples of clopen elements to clopen elements, which is not true.  For a counterexample, let $A$ be an infinite Boolean algebra and $x_0$ be an atom of $A^\delta$ which is not clopen. Then, we define  the c-slanted operator $\Diamond$ on $A$ defined by the assignment $\Diamond a: = a \vee x_0$  for each $a \in A$. It is clear that $ a \vee x_0$ is not open for every $a$ with $x_0 \not\leq a$, as it would imply that $x_0 = (a \vee x_0) \wedge \neg a$ is open. One can find a counterexample for a $g \in \mathcal{G}^{\bbA}$ in a similar fashion. 

In the standard setting, these properties are used in the proofs of the counterparts  of Lemmas \ref{Syn:Opn:Clsd:Appld:ClsdUp:Lemma} and \ref{Esakia:Syn:Clsd:Opn:Lemma} below (cf.~Lemmas 10.6 and 10.7 of \cite{CoPa-nondist}). However, rather than being formulated in terms of syntactically open and closed formulas, Lemmas \ref{Syn:Opn:Clsd:Appld:ClsdUp:Lemma} and \ref{Esakia:Syn:Clsd:Opn:Lemma} are formulated in terms of the more restricted notions of ssc and sso, which is why their proofs go through nonetheless.
\end{remark}

The proof of the following lemma is verbatim the same as the one of Lemma 10.3 in \cite{CoPa-nondist}, since in that proof, it is only needed that $g(\overline{a})$ is an open element and $f(\overline{a})$ is a closed element. For the sake of self-containdness, we report the proof.
\begin{lemma}\label{Blk:Diam:c:Clsd:Lemma}
For all $f\in \mathcal{F}$, $g\in \mathcal{G}$, $1\leq i\leq n_f$, and $1\leq j\leq n_g$,

\begin{enumerate}
\item If $\epsilon_g(j) = 1$, then $g^\flat_j(\overline{k})\in K(\bbas)$ for every $\overline{k}\in (K(\bbas))^{\epsilon_{g^\flat_j}}$;
\item If $\epsilon_g(j) = \partial$, then $g^\flat_j(\overline{o})\in O(\bbas)$ for every $\overline{o}\in (O(\bbas))^{\epsilon_{g^\flat_j}}$;
\item If $\epsilon_f(i) = 1$, then $f^\sharp_i(\overline{o})\in O(\bbas)$ for every $\overline{o}\in (O(\bbas))^{\epsilon_{f^\sharp_i}}$;
\item If $\epsilon_f(i) = \partial$, then $f^\sharp_i(\overline{k})\in K(\bbas)$ for every $\overline{k}\in (K(\bbas))^{\epsilon_{f^\sharp_i}}$.
%
%
%
%
%
%
%
%
\end{enumerate}
\end{lemma}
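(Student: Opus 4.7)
The plan is to adapt the proof of the analogous Lemma 10.3 of \cite{CoPa-nondist} to the slanted setting, making the minimal modifications that the new hypothesis allows. Items (1)--(4) are pairwise related by dualities: items (1) and (2) by switching $\epsilon_g(j) = 1$ and $\epsilon_g(j) = \partial$; items (3) and (4) by the analogous switch for $f$; and the $f$-items and $g$-items by the duality between $\mathcal{F}^*$- and $\mathcal{G}^*$-type connectives in the expanded signature. Hence I would describe the plan for item (1) only and invoke symmetry/order-duality for the remaining three.

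Fix $g \in \mathcal{G}$ with $\epsilon_g(j) = 1$, so that $g_j^\flat \in \mathcal{F}^*$ has $\epsilon_{g_j^\flat}(j) = 1$ and $\epsilon_{g_j^\flat}(i) = \epsilon_g^\partial(i)$ for $i \neq j$; fix $\overline{k} \in K(\bbas)^{\epsilon_{g_j^\flat}}$. Since $\bbas$ is a standard perfect LE (cf.~the discussion after Definition \ref{def:canext slanted}) on which $g^{\bbas}$ is completely meet-preserving in its $j$th coordinate by Lemma \ref{lemma:basic properties of extensions of slanted}, its left residual $g_j^\flat$ exists as an operation on $\bbas$ and satisfies the defining equivalence of Definition \ref{def_residuated_lattice} computed in $\bbas$.

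The key step is then to express $g_j^\flat(\overline{k})$ as a meet of elements of $A$. From this the desired conclusion $g_j^\flat(\overline{k}) \in K(\bbas)$ follows immediately, since $A \subseteq K(\bbas)$ and $K(\bbas)$ is closed under arbitrary meets. To achieve the rewriting, I would combine the $\pi$-extension formula from Definition \ref{def: sigma and pi extensions of slanted} (so that, for $\overline{o} \in (O(\bbas))^{\epsilon_g}$, the value $g^{\bbas}(\overline{o})$ is a directed join of values $g(\overline{a})$ with $\overline{a} \in A^{\epsilon_g}$) with compactness in $\bbas$ to replace every inequality of the form $k_j \leq g^{\bbas}(\overline{u})$ arising in the adjunction characterization of $g_j^\flat(\overline{k})$ by an equivalent inequality involving only finitely many values of $g$ evaluated on elements of $A$. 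At this point the argument proceeds verbatim as in \cite[Lemma 10.3]{CoPa-nondist}.

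The main obstacle will be the coordinate bookkeeping: tracking the alternation of $K(\bbas)$ and $O(\bbas)$ across the various coordinates, since $\epsilon_{g_j^\flat}$ inverts $\epsilon_g$ off the diagonal and the compactness manipulations pass freely between $g^{\bbas}$-inequalities and $g$-inequalities on approximating clopen tuples. Crucially, however, the only topological fact about $g^{\bbas}$ needed during those manipulations is that $g^{\bbas}$ sends tuples in $(O(\bbas))^{\epsilon_g}$ into $O(\bbas)$, which is precisely what Lemma \ref{cor: open upset to open upset for white box}(1) still provides in the slanted setting (compare Remark \ref{rem: counterexample}, which shows that the stronger ``clopen in, clopen out'' property used freely in the standard case genuinely fails for slanted operations). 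Hence no further adaptation beyond replacing the standard topological input by Lemma \ref{cor: open upset to open upset for white box} is needed.
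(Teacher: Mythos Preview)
Your proposal is correct and follows essentially the same approach as the paper: both reduce to item (1) by order-duality, express $g_j^\flat(\overline{k})$ as a meet of clopens via residuation, the $\pi$-extension formula, and compactness, and both explicitly isolate that the only topological input required is $g(\overline{a})\in O(\bbas)$ (Lemma~\ref{cor: open upset to open upset for white box}(1)) rather than the stronger clopen-preservation of the standard setting. The paper in fact remarks just before the lemma that the proof is verbatim that of \cite[Lemma~10.3]{CoPa-nondist} for precisely this reason, which is exactly your observation.
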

\begin{proof}
1. By denseness, $g^\flat_j(\overline{k}) = \bigwedge \{ o \in O(\bbas) \mid g^\flat_j(\overline{k})  \leq o \}$. Let $Y := \{ o \in O(\bbas) \mid g^\flat_j(\overline{k})  \leq o \}$ and $X : = \{ a \in \bba \mid g^\flat_j(\overline{k})  \leq a \}$.
To show that $g^\flat_j(\overline{k}) \in K(\bbas)$, it is enough to show that $\bigwedge X = \bigwedge Y$.

Since clopens are opens, $X \subseteq Y$, so $\bigwedge Y \leq \bigwedge X$. In order to show that $\bigwedge X \leq \bigwedge Y$, it suffices to show that for every $o \in Y$ there exists some $a \in X$ such that $a \leq o$. Let $o \in Y$, i.e., $g^\flat_j(\overline{k})  \leq o$. By residuation, $k_j \leq g(\overline{k}[o/k_j])$, where $\overline{k}[o/k_j]$ denotes the $n_g$-array obtained by replacing the $j$th coordinate of $\overline{k}$ by $o$. Notice that $\overline{k}[o/k_j]\in (O(\bbas))^{\epsilon_g}$. This immediately follows from the fact that by assumption, $\epsilon_{g^\flat_j}(l) = \epsilon_g(l) = 1$ if $l = j$ and $\epsilon_{g^\flat_j}(l) = \epsilon_g^\partial(l)$ if $l\neq j$.

Since $k_j \in K(\bbas)$, and $g(\overline{k}[o/k_j]) = g^\pi(\overline{k}[o/k_j]) = \bigvee \{g(\overline{a})\mid \overline{a}\in \bba^{\epsilon_g} \textrm{ and } \overline{a} \leq^{\epsilon_g} \overline{k}[o/k_j] \}$ and $g(\overline{ a })\in  O(\bbas)$, we may apply compactness and get that
$k_j \leq g(\overline{ a_1 })\vee \cdots \vee g(\overline{ a_n })$ for some $\overline{ a_1 },\ldots, \overline{ a_n }\in \bba^{\epsilon_g}$ s.t.\ $\overline{ a_1 },\ldots, \overline{ a_n } \leq^{\epsilon_g} \overline{k}[o/k_j]$. Let $\overline{ a} = \overline{ a_1 } \vee^{\epsilon_g} \cdots \vee^{\epsilon_g} \overline{ a_n }$. The $\epsilon_g$-monotonicity of $g$ implies that $k_j \leq g(\overline{ a_1 })\vee \cdots \vee g(\overline{ a_n })\leq  g(\overline{ a })$, and hence $g^\flat_j(\overline{a}[k_j/a_j])\leq a_j$. The proof is complete if we show that
$g^\flat_j(\overline{k}) \leq g^\flat_j(\overline{a}[k_j/a_j])$. By the $\epsilon_{g^\flat_j}$-monotonicity of $g^\flat_j$, it is enough to show that $\overline{k} \leq^{\epsilon_{g^\flat_j}}\overline{a}[k_j/a_j]$. Since the two arrays coincide in their $j$th coordinate, we only need to check that this is true for every $l \neq j$. Recall that $\epsilon_{g^\flat_j}(l) = \epsilon_g^\partial(l)$ if $l\neq j$. Hence, the statement immediately follows from this and the fact that, by construction, $\overline{ a }\leq^{\epsilon_g} \overline{k}[o/k_j]$.

2.\ 3.\ and 4.\ are order-variants of 1.
\end{proof}

The proofs of the  following lemmas are verbatim the same as the ones of Lemmas 10.4 and 10.5 in \cite{CoPa-nondist}.

\begin{lemma}\label{lemma:uncongenial for the whites}
For all $f\in \mathcal{F}$ and $g\in \mathcal{G}$,
%
\begin{enumerate}
\item $g(\bigvee^{\epsilon_g(1)} \mathcal{U}_1,\ldots, \bigvee^{\epsilon_g(n_g)} \mathcal{U}_{n_g}) = \bigvee \{g(u_1,\ldots,u_{n_g})\ |\ u_j \in \mathcal{U}_j\mbox{ for every } 1\leq j\leq n_g\}$ for every  $n_g$-tuple $(\mathcal{U}_1,\ldots, \mathcal{U}_{n_g})$ such that
    $\mathcal{U}_j\subseteq O(\bbas)^{\epsilon_g(j)}$ and $\mathcal{U}_j$ is $\epsilon_g(j)$-up-directed for each $1\leq j\leq n_g$.
\item $f(\bigwedge^{\epsilon_f(1)} \mathcal{D}_1,\ldots, \bigwedge^{\epsilon_f(n_f)} \mathcal{D}_{n_f}) = \bigwedge \{f(d_1,\ldots,d_{n_f})\ |\ d_j \in \mathcal{D}_j\mbox{ for every } 1\leq j\leq n_f\}$ for every  $n_f$-tuple $(\mathcal{D}_1,\ldots, \mathcal{D}_{n_f})$ such that
    $\mathcal{D}_j\subseteq K(\bbas)^{\epsilon_f(j)}$ and $\mathcal{D}_j$ is $\epsilon_f(j)$-down-directed for each $1\leq j\leq n_f$.
%
%
%
%
%
\end{enumerate}
\end{lemma}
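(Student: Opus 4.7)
The plan is to prove the two items in parallel by order duality, focusing on item 2. Both directions of the equality will be handled separately. The easy direction, namely $f\bigl(\overline{\bigwedge^{\epsilon_f}\mathcal{D}}\bigr) \leq \bigwedge\{f(\overline{d})\mid \overline{d}\in \prod_j \mathcal{D}_j\}$, will follow immediately from the $\epsilon_f$-monotonicity of $f^\sigma$ (Lemma \ref{lemma:basic properties of extensions of slanted}(1)): for every choice $\overline{d}\in\prod_j\mathcal{D}_j$ one has $\bigwedge^{\epsilon_f(j)}\mathcal{D}_j \leq^{\epsilon_f(j)} d_j$ coordinatewise, so $f(\overline{\bigwedge^{\epsilon_f}\mathcal{D}}) \leq f(\overline{d})$.

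For the nontrivial direction, the first step is to observe that $\bigwedge^{\epsilon_f(j)}\mathcal{D}_j \in K(A^\delta)^{\epsilon_f(j)}$ for each $j$: indeed, $K(A^\delta)$ is closed under arbitrary meets and $O(A^\delta)$ under arbitrary joins, so the $\epsilon_f$-meet of a family in $K(A^\delta)^{\epsilon_f}$ lands back in $K(A^\delta)^{\epsilon_f}$. This means we can evaluate $f = f^\sigma$ on $\overline{k}:= \overline{\bigwedge^{\epsilon_f}\mathcal{D}}$ using the ``first-stage'' definition of the $\sigma$-extension:
\[ f(\overline{k}) \;=\; \bigwedge\{f(\overline{a})\mid \overline{a}\in A^{\epsilon_f}\text{ and } \overline{k}\leq^{\epsilon_f}\overline{a}\}. \]
So it suffices to prove that for every such $\overline{a}\in A^{\epsilon_f}$ there exists some $\overline{d}\in\prod_j\mathcal{D}_j$ with $\overline{d}\leq^{\epsilon_f}\overline{a}$, since then $\bigwedge\{f(\overline{d})\}\leq f(\overline{d})\leq f(\overline{a})$ by monotonicity, and taking the meet over $\overline{a}$ yields $\bigwedge\{f(\overline{d})\}\leq f(\overline{k})$.

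The main work, to be done coordinatewise, is to produce such a $d_j$ from the assumption $\bigwedge^{\epsilon_f(j)}\mathcal{D}_j \leq^{\epsilon_f(j)} a_j$. For $\epsilon_f(j)=1$ I would write each $d\in\mathcal{D}_j\subseteq K(A^\delta)$ as a meet $d=\bigwedge S_d$ with $S_d\subseteq A$ by denseness, so that $\bigwedge\mathcal{D}_j = \bigwedge \bigcup_d S_d \leq a_j = \bigvee\{a_j\}$; compactness then yields a finite $S'\subseteq\bigcup_d S_d$ with $\bigwedge S'\leq a_j$, and $S'$ is covered by finitely many $S_{d_1},\ldots, S_{d_m}$, giving $d_1\wedge\cdots\wedge d_m \leq \bigwedge S'\leq a_j$; finally down-directedness of $\mathcal{D}_j$ yields a single $d_j\in\mathcal{D}_j$ below $d_1\wedge\cdots\wedge d_m$. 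The case $\epsilon_f(j)=\partial$ is handled symmetrically, using that $O(A^\delta)$-elements are joins of elements of $A$, that $a_j\leq \bigvee\mathcal{D}_j$ gives a finite subcover by compactness, and $\epsilon_f(j)$-up-directedness of $\mathcal{D}_j$ (i.e.\ upward-directedness in the original order) yields a single $o\in\mathcal{D}_j$ above the finite join.

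Item 1 is then obtained from item 2 by order-duality: the $\pi$-extension $g^\pi$ of an $\epsilon_g$-monotone, coordinatewise finitely meet-preserving slanted map $g$ becomes the $\sigma$-extension of a c-slanted operation when viewed between the order-duals, and the hypotheses on up-directed families of open elements become hypotheses on down-directed families of closed elements. The main obstacle is conceptually minor but notationally heavy: keeping careful track of how the $\epsilon_f$-meet interacts with the mixed $K/O$ status of coordinates in the step producing $d_j$. This is precisely the slanted-setting analogue of \cite[Lemma 10.4]{CoPa-nondist}, but because slanted operations take clopens to closed/open (not clopen) elements, we cannot appeal to the standard calculation and must rely on the denseness-plus-compactness argument sketched above.
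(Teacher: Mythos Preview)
Your argument is correct and in fact gives a more elementary proof than the paper's. The paper proves item~1 (and dualises for item~2) by an iterated residuation argument: from $c\leq g(\bigvee^{\epsilon_g}\mathcal{U})$ with $c$ closed, it residuates to obtain $g^\flat_1(c,\overline{\bigvee^{\epsilon_g}\mathcal{U}})\leq^{\epsilon_g(1)}\bigvee^{\epsilon_g(1)}\mathcal{U}_1$, invokes Lemma~\ref{Blk:Diam:c:Clsd:Lemma} to conclude the left-hand side is closed, applies compactness and up-directedness to extract a single $u_1\in\mathcal{U}_1$, and then repeats coordinate by coordinate. Your approach bypasses residuation entirely by unfolding the first-stage definition of $f^\sigma$ on $\overline{k}\in K(A^\delta)^{\epsilon_f}$ and reducing to a pure compactness-plus-directedness argument inside each coordinate. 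This is genuinely different: it does not rely on Lemma~\ref{Blk:Diam:c:Clsd:Lemma} at all, and is closer in spirit to the standard Scott-continuity arguments for $\sigma$-extensions. The paper's route has the virtue of exhibiting the role of the residuals (which are needed later anyway), while yours is self-contained and arguably cleaner for this particular statement. One small slip: in the $\epsilon_f(j)=\partial$ case you wrote ``$\epsilon_f(j)$-up-directedness'' where the hypothesis is $\epsilon_f(j)$-\emph{down}-directedness, but you immediately unpack it correctly as upward-directedness in the original order, so the argument stands.
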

\begin{proof}
1. The `$\geq$' direction easily follows from the $\epsilon_g$-monotonicity of $g$. Conversely, by denseness it is enough to show that if $c \in \kbbas$ and $c \leq g(\bigvee^{\epsilon_g(1)} \mathcal{U}_1,\ldots, \bigvee^{\epsilon_g(n_g)} \mathcal{U}_{n_g})$, then $c \leq g(u_1,\ldots,u_{n_g})$ for some tuple $(u_1,\ldots,u_{n_g})$ such that $u_j\in \mathcal{U}_j$ for each $1\leq j\leq n_g$. Hence, consider $c \leq g(\bigvee^{\epsilon_g(1)} \mathcal{U}_1,\ldots, \bigvee^{\epsilon_g(n_g)} \mathcal{U}_{n_g})$. Then $g^\flat_1(c, \overline{\bigvee^{\epsilon_g} \mathcal{U}}) \leq^{\epsilon(1)} \bigvee^{\epsilon_g(1)} \mathcal{U}_1$, where, to enhance readability, we suppress sub- and superscripts and write $\overline{\bigvee^{\epsilon_g} \mathcal{U}}$ for $(\bigvee^{\epsilon_g(2)} \mathcal{U}_2, \ldots, \bigvee^{\epsilon_g(n_g)} \mathcal{U}_{n_g})$. If $\epsilon_g(1) = 1$, then  $\epsilon_{g^\flat_1}(1) = 1$ and $\epsilon_{g^\flat_1}(l) = \epsilon_{g}^\partial(l)$ for every $2\leq l\leq n_g$. Hence $\mathcal{U}_l\subseteq O(\bbas)^{\epsilon_g(l)} = K(\bbas)^{\epsilon_{g^\flat_1}(l)}$, hence $\bigvee^{\epsilon_g(l)} \mathcal{U}_l = \bigwedge^{\epsilon_{g^\flat_1}(l)}\mathcal{U}_l\in K(\bbas)^{\epsilon_{g^\flat_1}(l)}$ for every $2\leq l\leq n_g$.
By Lemma \ref{Blk:Diam:c:Clsd:Lemma}(1), this implies that $g^\flat_1(c, \overline{\bigvee^{\epsilon_g} \mathcal{U}})\in K(\bbas)$. Hence, by  compactness,  $g^\flat_1(c, \overline{\bigvee^{\epsilon_g} \mathcal{U}})\leq \bigvee_{i=1}^n o_i$ for some $o_1,\ldots,o_n\in \mathcal{U}_1$. Since $\mathcal{U}_1$ is up-directed, $\bigvee_{i=1}^n o_i \leq u_1$ for some $u_1 \in \mathcal{U}_1$. Hence $c \leq g(u_1, \overline{\bigvee^{\epsilon_g} \mathcal{U}})$. The same conclusion can be reached via a similar argument if $\epsilon_g(1) = \partial$. Therefore,  $g^\flat_2(u_1, c, \overline{\bigvee^{\epsilon_g} \mathcal{U}}) \leq^{\epsilon_g(2)} \bigvee^{\epsilon_g(2)} \mathcal{U}_2$, where $\overline{\bigvee^{\epsilon_g} \mathcal{U}}$ now stands for $(\bigvee^{\epsilon_g(3)} \mathcal{U}_3, \ldots, \bigvee^{\epsilon_g(n_g)} \mathcal{U}_{n_g})$.  By applying the same reasoning, we can conclude that $c \leq g(u_1, u_2, \overline{\bigvee^{\epsilon_g} \mathcal{U}})$ for some $u_2\in \mathcal{U}_2$, and so on. Hence, we can then construct a sequence $u_j\in \mathcal{U}_j$ for $1\leq j\leq n_g$ such that $c\leq g(u_1,\ldots u_{n_g})$, as required.


2.\ is order-dual to 1.
%
\end{proof}

\begin{lemma}
\label{lemma: uncongenial for the blacks} For all $f\in \mathcal{F}$, $g\in \mathcal{G}$, $1\leq i\leq n_f$, and $1\leq j\leq n_g$,
\begin{enumerate}
\item If $\epsilon_g(j) = 1$, then \[ g^\flat_j(\bigwedge{}^{\epsilon_{g^\flat_j}(1)} \mathcal{D}_1,\ldots, \bigwedge{}^{\epsilon_{g^\flat_j}(n_g)} \mathcal{D}_{n_g}) = \bigwedge \{g^\flat_j(d_1,\ldots,d_{n_g})\ |\ d_h \in \mathcal{D}_h\mbox{ for every } 1\leq h\leq n_g\}\] for every  $n_g$-tuple $(\mathcal{D}_1,\ldots, \mathcal{D}_{n_g})$ such that
    $\mathcal{D}_h\subseteq K(\bbas)^{\epsilon_{g^\flat_j}(h)}$ and $\mathcal{D}_h$ is $\epsilon_{g^\flat_j}(h)$-down-directed for each $1\leq h\leq n_g$.

    \item If $\epsilon_g(j) = \partial$, then \[ g^\flat_j(\bigvee{}^{\epsilon_{g^\flat_j}(1)} \mathcal{U}_1,\ldots, \bigvee{}^{\epsilon_{g^\flat_j}(n_g)} \mathcal{U}_{n_g}) = \bigvee \{g^\flat_j(u_1,\ldots,u_{n_g})\ |\ u_h \in \mathcal{U}_h\mbox{ for every } 1\leq h\leq n_g\} \] for every  $n_g$-tuple $(\mathcal{U}_1,\ldots, \mathcal{U}_{n_g})$ such that
    $\mathcal{U}_h\subseteq O(\bbas)^{\epsilon_{g^\flat_j}(h)}$ and $\mathcal{U}_h$ is $\epsilon_{g^\flat_j}(h)$-up-directed for each $1\leq h\leq n_g$.

\item If $\epsilon_f(i) = 1$, then \[ f^\sharp_i(\bigvee{}^{\epsilon_{f^\sharp_i}(1)} \mathcal{U}_1,\ldots, \bigvee{}^{\epsilon_{f^\sharp_i}(n_f)} \mathcal{U}_{n_f}) = \bigvee \{f^\sharp_i(u_1,\ldots,u_{n_f})\ |\ u_h \in \mathcal{U}_h\mbox{ for every } 1\leq h\leq n_f\}\] for every  $n_f$-tuple $(\mathcal{U}_1,\ldots, \mathcal{U}_{n_f})$ such that
    $\mathcal{U}_h\subseteq O(\bbas)^{\epsilon_{f^\sharp_i}(h)}$ and $\mathcal{U}_h$ is $\epsilon_{f^\sharp_i}(h)$-up-directed for each $1\leq h\leq n_f$.
\item If $\epsilon_f(i) = \partial$, then \[ f^\sharp_i(\bigwedge{}^{\epsilon_{f^\sharp_i}(1)} \mathcal{D}_1,\ldots, \bigwedge{}^{\epsilon_{f^\sharp_i}(n_f)} \mathcal{D}_{n_f}) = \bigwedge \{f^\sharp_i(d_1,\ldots,d_{n_f})\ |\ d_h \in \mathcal{D}_h\mbox{ for every } 1\leq h\leq n_f\} \] for every  $n_f$-tuple $(\mathcal{D}_1,\ldots, \mathcal{D}_{n_f})$ such that
    $\mathcal{D}_h\subseteq K(\bbas)^{\epsilon_{f^\sharp_i}(h)}$ and $\mathcal{D}_h$ is $\epsilon_{f^\sharp_i}(h)$-down-directed for each $1\leq h\leq n_f$.
%
%
%
%
%
%
%
%
%
%
\end{enumerate}
\end{lemma}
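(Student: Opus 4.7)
The proof mirrors the structure of the proof of Lemma \ref{lemma:uncongenial for the whites}, with the roles of the original connectives and their residuals interchanged via the bidirectional residuation rules of Definition \ref{def_residuated_lattice}. By order-duality it suffices to prove item 1; items 2, 3 and 4 follow by swapping closed with open, meets with joins, and $f$ with $g$ throughout. The `$\leq$' direction of the target identity is immediate from the $\epsilon_{g^\flat_j}$-monotonicity of $g^\flat_j$ granted by Lemma \ref{lemma:basic properties of extensions of slanted}, so I concentrate on `$\geq$'.

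Fix $j$ with $\epsilon_g(j)=1$, so that $g^\flat_j\in\mathcal{F}^\ast$ with $\epsilon_{g^\flat_j}(j)=1$ and $\epsilon_{g^\flat_j}(l)=\epsilon_g^\partial(l)$ for $l\neq j$. By denseness of $\bbas$, it suffices to show that whenever $o\in O(\bbas)$ satisfies $g^\flat_j(\bigwedge^{\epsilon_{g^\flat_j}(1)}\mathcal{D}_1,\ldots,\bigwedge^{\epsilon_{g^\flat_j}(n_g)}\mathcal{D}_{n_g})\leq o$, there exist $d_h\in\mathcal{D}_h$ with $g^\flat_j(d_1,\ldots,d_{n_g})\leq o$. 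Residuating in the $j$-th coordinate (which is possible because $\epsilon_g(j)=1$) converts the hypothesis into
\[ \bigwedge\mathcal{D}_j \,\leq\, g\bigl(\overline{\bigvee{}^{\epsilon_g}\mathcal{D}}[o/d_j]\bigr), \]
where each $\mathcal{D}_l$ for $l\neq j$ is, by the identifications $K(\bbas)^{\epsilon_{g^\flat_j}(l)}=O(\bbas)^{\epsilon_g(l)}$ and $\epsilon_{g^\flat_j}(l)$-down-directed $=\epsilon_g(l)$-up-directed, an $\epsilon_g(l)$-up-directed family in $O(\bbas)^{\epsilon_g(l)}$, while the $j$-th slot is the trivially up-directed singleton $\{o\}$. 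Lemma \ref{lemma:uncongenial for the whites}.1 then expresses the right-hand side as the join of the open elements $g(\overline{v}[o/v_j])$ indexed by $v_l\in\mathcal{D}_l$ for $l\neq j$ (openness follows from Lemma \ref{cor: open upset to open upset for white box}.1).

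Since $\bigwedge\mathcal{D}_j\in K(\bbas)$, compactness picks out a finite subcollection of these opens whose join already dominates $\bigwedge\mathcal{D}_j$; by $\epsilon_g(l)$-up-directedness, the coordinates of the finitely many choice tuples can be consolidated into a single $\overline{v^*}\in\prod_{l\neq j}\mathcal{D}_l$ with $\bigwedge\mathcal{D}_j\leq g(\overline{v^*}[o/v_j])$. The principal technical obstacle is then extracting a single $d_j^*\in\mathcal{D}_j$ with $d_j^*\leq g(\overline{v^*}[o/v_j])$: since $\mathcal{D}_j\subseteq K(\bbas)$ need not lie inside $A$, standard compactness does not directly isolate such a $d_j^*$. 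Unpacking each $d\in\mathcal{D}_j$ as a meet of elements of $A$ and $g(\overline{v^*}[o/v_j])$ as a join of elements of $A$, compactness supplies a finite witness spanning finitely many elements $d^{(1)},\ldots,d^{(m)}\in\mathcal{D}_j$, and the $1$-down-directedness of $\mathcal{D}_j$ furnishes a common lower bound $d_j^*\in\mathcal{D}_j$ below all of them, hence below $g(\overline{v^*}[o/v_j])$. Residuating back yields $g^\flat_j(\overline{v^*}[d_j^*/v_j])\leq o$, which is the desired conclusion with $\overline{d}:=\overline{v^*}[d_j^*/v_j]\in\prod_h\mathcal{D}_h$.
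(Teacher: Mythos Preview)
Your proof is correct and follows essentially the same route as the paper's, which proves the order-dual item 3 and declares the remaining items order-variants. The only organizational difference is that the paper applies compactness in a single pass to the whole inequality $\bigwedge\{f(u_1,\ldots,c,\ldots,u_{n_f})\mid u_l\in\mathcal{U}_l\}\leq\bigvee\mathcal{U}_i$ (dually for item 1, $\bigwedge\mathcal{D}_j\leq\bigvee\{g(d_1,\ldots,o,\ldots,d_{n_g})\mid d_l\in\mathcal{D}_l\}$), extracting finite witnesses on both sides at once and then consolidating via directedness in all coordinates simultaneously, so your ``principal technical obstacle'' never appears as a separate step.
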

\begin{proof}
3.\ The `$\geq$' direction easily follows from the
$\epsilon_{f^\sharp_i}$-monotonicity of $f^\sharp_i$. For the converse inequality, by denseness it is enough to show that if we have
$c\leq f^\sharp_i(\bigvee^{\epsilon_{f^\sharp_i}(1)} \mathcal{U}_1,\ldots, \bigvee^{\epsilon_{f^\sharp_i}(n_f)} \mathcal{U}_{n_f})$ for a closed element $c$, then $c\leq f^\sharp_i(u_1,\ldots,u_{n_f})$ for some tuple $(u_1,\ldots,u_{n_f})$ such that $u_h \in \mathcal{U}_h$ for every $1\leq h\leq n_f$. By residuation,
$c\leq f^\sharp_i(\bigvee^{\epsilon_{f^\sharp_i}(1)} \mathcal{U}_1,\ldots, \bigvee^{\epsilon_{f^\sharp_i}(n_f)} \mathcal{U}_{n_f})$ implies that we have the inequality $f(
\bigvee^{\epsilon_{f^\sharp_i}(1)} \mathcal{U}_1,\ldots,c,\ldots, \bigvee^{\epsilon_{f^\sharp_i}(n_f)} \mathcal{U}_{n_f})\leq \bigvee^{\epsilon_{f^\sharp_i}(i)} \mathcal{U}_i$.
The assumption $\epsilon_f(i) = 1$ implies that  $\epsilon_{f^\sharp_i}(i) = 1$ and $\epsilon_{f^\sharp_i}(l) = \epsilon_{f}^\partial(l)$ for every $l\neq i$. Hence $\mathcal{U}_l\subseteq O(\bbas)^{\epsilon_{f^\sharp_i}(l)} = K(\bbas)^{\epsilon_f(l)}$, and  $\mathcal{U}_l$ is $\epsilon_f(l)$-down-directed 
for every $l\neq i$. Recalling that $\bigvee^{\epsilon_{f^\sharp_i}(l)}$ coincides with $\bigwedge^{\epsilon_{f}(l)}$,
we can apply  Lemma \ref{lemma:uncongenial for the whites}(2) and get:
\[f(\bigvee{}^{\epsilon_{f^\sharp_i}(1)} \mathcal{U}_1,\ldots,c,\ldots, \bigvee{}^{\epsilon_{f^\sharp_i}(n_f)} \mathcal{U}_{n_f}) =
\bigwedge \{f(u_1,\ldots,c,\ldots, u_{n_f})\mid u_l\in \mathcal{U}_l \mbox{ for every } l\neq i\}.\]
Hence, by compactness, 
$f(\bigvee^{\epsilon_{f^\sharp_i}(1)} \mathcal{U}_1,\ldots,c,\ldots, \bigvee^{\epsilon_{f^\sharp_i}(n_f)} \mathcal{U}_{n_f})\leq \bigvee^{\epsilon_{f^\sharp_i}(i)} \mathcal{U}_i$ implies that
\[\bigwedge_{1\leq j\leq m}\{f(o_1^{(j)},\ldots,c,\ldots, o_{n_f}^{(j)})\mid o_l^{(j)}\in \mathcal{U}_l\mbox{ for all } l\neq i \}\leq o_i^{(1)}\vee\cdots \vee o_i^{(n)}\]
for some $o_i^{(1)},\ldots, o_i^{(n)}\in \mathcal{U}_i$.
The assumptions that $\epsilon_f(i) = 1$ and that each $\mathcal{U}_h$ is $\epsilon_{f^\sharp_i}(h)$-up-directed for every $1\leq h\leq n_f$ imply that $\mathcal{U}_i$ is up-directed and $\mathcal{U}_l$ is $\epsilon_f(l)$-down-directed for each $l\neq i$. Hence, some $u_1,\ldots, u_{n_f}$ exist such that  $u_l\leq^{\epsilon_f(l)}\bigwedge^{\epsilon_f(l)}_{1\leq j\leq m}o_l^{(j)}$  and $o_i^{(1)}\vee\cdots \vee o_i^{(n)}\leq u_i.$
The $\epsilon_f$-monotonicity of $f$ implies  the following chain of inequalities:
\begin{center}
\begin{tabular}{r c l}
$f(u_1,\ldots,c, \ldots, u_{n_f})$ & $\leq$ &$ f(\bigwedge^{\epsilon_f(1)}_{1\leq j\leq m}o_1^{(j)},\ldots,c, \ldots, \bigwedge^{\epsilon_f(n_f)}_{1\leq j\leq m}o_{n_f}^{(j)})$\\
& $\leq$ &$ \bigwedge_{1\leq j\leq m}\{f(o_1^{(j)},\ldots,c,\ldots, o_{n_f}^{(j)})\mid o_l^{(j)}\in \mathcal{U}_l\mbox{ for all } l\neq i \}$\\
& $\leq$ &$ o_i^{(1)}\vee\cdots \vee o_i^{(n)}$\\
& $\leq$ &$ u_i$,\\
\end{tabular}
\end{center}
which implies that $c\leq f^\sharp_i(u_1,\ldots,u_{n_f})$, as required.

1.\ 2.\ and 4.\ are order-variants of 3.
%
%
\end{proof}

\subsection{Proof of the restricted Ackermann lemmas (lemmas \ref{Ackermann:Dscrptv:Right:Lemma} and \ref{Ackermann:Dscrptv:Left:Lemma})}
\label{Sec: topological Ackermann}

For any $\mathcal{L}_{\mathrm{LE}}^+$-formula $\phi$, any  slanted $\mathcal{L}_{\mathrm{LE}}$-algebra $\bba$  and assignment $V$ on $\bbas$, we write $\phi(V)$ to denote the extension of $\phi$ in $\bbas$ under the assignment $V$. We remind the reader that, even when $\varphi$ is in the basic signature and $V$ is an admissible valuation, $\varphi(V)$ may fail to be an element of $\bba$ (cf.~Remark \ref{rem: counterexample} for a counterexample).

Let $p$ be a propositional variable occurring in $\phi$ and $V$ be any assignment. For any $x \in \bbas$, let $V[p:= x]$ be the assignment which is identical to $V$ except that it assigns $x$ to $p$. Then $x\mapsto \phi(V[p:= x])$  defines an operation on $\bbas$, which we will denote $\phi^{V}_{p}(x)$.

\medskip

The proofs of the following two lemmas are more streamlined versions of those of Lemmas 10.6 and 10.7 of \cite{CoPa-nondist}. The modifications concern the differences between the notions of syntactically closed and open formulas (see Definition \ref{Syn:Opn:Clsd:Definition}) and ssc and sso (see Definition \ref{strictlySyn:Opn:Clsd:Definition}).

\begin{lemma}\label{Syn:Opn:Clsd:Appld:ClsdUp:Lemma}
Let $\phi$ be ssc and $\psi$ sso. Let $V$ be an admissible assignment, $c \in K(\bbas)$  and $o \in O(\bbas)$.
\begin{enumerate}
\item
    \begin{enumerate}
    \item If $\phi(p)$ is positive in $p$, then $\phi^{V}_{p}(c) \in K(\bbas)$, and
    \item if $\psi(p)$ is negative in $p$, then $\psi^{V}_{p}(c) \in O(\bbas)$.
    \end{enumerate}
\item
    \begin{enumerate}
    \item If $\phi(p)$ is negative in $p$, then $\phi^{V}_{p}(o) \in K(\bbas)$, and
    \item if $\psi(p)$ is positive in $p$, then $\psi^{V}_{p}(o) \in O(\bbas)$.
    \end{enumerate}
\end{enumerate}
\end{lemma}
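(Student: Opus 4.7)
The plan is to prove all four clauses simultaneously by induction on the joint complexity of $\phi$ and $\psi$, mirroring the structure of Definition \ref{strictlySyn:Opn:Clsd:Definition}. The four statements are mutually interlocking precisely in the way the ssc/sso grammars are mutually interlocking, so a simultaneous induction is the natural (and in fact forced) setup. The conclusion in each case is a topological membership assertion ($\phi^V_p(x)\in K(\bba^\delta)$ or $\psi^V_p(x)\in O(\bba^\delta)$), and I would carry polarity hypotheses through carefully, using that $-p\prec +(\psi)$ inside an $\epsilon_{f^*}$-positive coordinate of $f^*(\overline{\phi},\overline{\psi})$ becomes $+p\prec +\psi$, and similar bookkeeping for the $\partial$-coordinates.

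The base cases are straightforward. For a propositional variable $q\ne p$, admissibility of $V$ gives $V(q)\in e[A]=K(\bba^\delta)\cap O(\bba^\delta)$, hence $V(q)$ is both closed and open, so all four clauses hold vacuously/trivially. Nominals evaluate into $J^\infty(\bba^\delta)\subseteq K(\bba^\delta)$, co-nominals into $M^\infty(\bba^\delta)\subseteq O(\bba^\delta)$, and $\top,\bot$ are clopen. When $\phi=p$ (ssc, positive in $p$), one has $\phi^V_p(c)=c\in K(\bba^\delta)$; the cases $\psi=p$ being negative in $p$, $\phi=p$ negative in $p$, etc., are vacuous by the convention that $p$ is positive in itself. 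For the lattice-connective inductive steps ($\phi_1\vee\phi_2$ and $\phi_1\wedge\phi_2$ ssc, and dually for sso), each immediate subformula is again ssc with the same polarity in $p$ as the whole, so the induction hypothesis delivers closedness of each subformula's extension, and one concludes using that $K(\bba^\delta)$ is closed under finite joins and arbitrary meets (and $O(\bba^\delta)$ dually).

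The genuine inductive step is the modal one. Consider $\phi=f^*(\overline{\varphi},\overline{\psi})$ with $f^*\in\mathcal{F}^*$, ssc and positive in $p$. By definition of ssc, each $\varphi$ sits in a $1$-coordinate of $\epsilon_{f^*}$ and is ssc, and each $\psi$ sits in a $\partial$-coordinate and is sso. Since $\phi$ is positive in $p$, each $\varphi$ is positive in $p$ and each $\psi$ is negative in $p$. The induction hypothesis (clause 1(a) for each $\varphi$, clause 1(b) for each $\psi$) yields that the whole argument tuple of $f^*$ lies in $K(\bba^\delta)^{\epsilon_{f^*}}$ after substituting $c$ for $p$. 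Depending on the provenance of $f^*$ — a primitive $f\in\mathcal F$, a residual $g_j^\flat$ with $\epsilon_g(j)=1$, or a residual $f_i^\sharp$ with $\epsilon_f(i)=\partial$ — the conclusion that $\phi^V_p(c)\in K(\bba^\delta)$ then follows respectively from Lemma \ref{cor: open upset to open upset for white box}(2), Lemma \ref{Blk:Diam:c:Clsd:Lemma}(1), or Lemma \ref{Blk:Diam:c:Clsd:Lemma}(4). The remaining three clauses, and the case $\psi=g^*(\overline{\psi'},\overline{\varphi'})$ with $g^*\in\mathcal{G}^*$, are order-dual and use Lemma \ref{cor: open upset to open upset for white box}(1) and the other items of Lemma \ref{Blk:Diam:c:Clsd:Lemma}.

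The main obstacle — and the reason the lemma fails for the wider class of merely \emph{syntactically} closed/open formulas of Definition \ref{Syn:Opn:Clsd:Definition} — is that in the slanted setting one does \emph{not} have $f(\overline{o})\in O(\bba^\delta)$ or $g(\overline{k})\in K(\bba^\delta)$ (cf.~Remark \ref{rem: counterexample}). This is exactly why the grammar of ssc permits only $f^*\in\mathcal F^*$ (and sso only $g^*\in\mathcal G^*$): this restriction is precisely what makes the appeal to Lemmas \ref{cor: open upset to open upset for white box} and \ref{Blk:Diam:c:Clsd:Lemma} possible in the inductive step, and the polarity constraints on $p$ in the statement are exactly what is needed so that the IH delivers a tuple of the correct topological shape to feed into those lemmas.
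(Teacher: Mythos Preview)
Your proof is correct and follows essentially the same approach as the paper: a simultaneous induction on the ssc/sso grammar, with base cases handled via admissibility and the inclusions $J^\infty\subseteq K$, $M^\infty\subseteq O$, and the modal inductive step handled by the case split $f^*\in\mathcal F$ versus $f^*\in\mathcal F^*\setminus\mathcal F$ using Lemmas \ref{cor: open upset to open upset for white box} and \ref{Blk:Diam:c:Clsd:Lemma} respectively. Your explicit identification of which items of Lemma \ref{Blk:Diam:c:Clsd:Lemma} are invoked (items 1 and 4 for the $\mathcal F^*$-side, 2 and 3 for the $\mathcal G^*$-side) and your closing remark on why the restriction to ssc/sso is essential are, if anything, slightly more detailed than the paper's presentation.
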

\begin{proof}
We prove 1. by simultaneous induction on $\phi$ and $\psi$. Assume that $\phi(p)$ is positive in $p$ and  $\psi(p)$ is negative in $p$. The base cases of the induction are those when $\phi$ is of the form $\top$, $\bot$, $p$, $q$ (for propositional variables $q$ different from $p$) or $\nomi$, and $\psi$ is of the form $\top$, $\bot$, $q$ (for propositional variables $q$ different from $p$), or $\cnomm$ (note that $\phi$ cannot be a co-nominal $\cnomm$, since it is syntactically closed. Also, $\psi$ cannot be $p$ or a nominal $\nomi$, since $\psi$ is negative in $p$ and is syntactically open, respectively). These cases follow by noting (1) that $V[p:= c](\bot) = 0 \in \mathbb{A}$, $V[p:= c](\top) = 1 \in \mathbb{A}$, and $V[p:= c](q) = V(q) \in \mathbb{A}$, (2) that $V[p:= c](p) = c \in K(\bbas)$ and $V[p:= c](\nomi) \in J^{\infty}(\bbas) \subset K(\bbas)$, and (3) that $V[p:= c](\cnomm) \in M^{\infty}(\bbas) \subset O(\bbas)$ (see discussion on page \pageref{Page:JIr:Clsd:MIr:Opn}).

For the remainder of the proof we will not need to refer to the valuation $V$ and will hence omit reference to it. We will accordingly write $\phi$ and $\psi$ for $\phi^V_p$ and $\psi^V_p$, respectively.

In the cases $\phi(p) = f^\ast(\overline{\phi'(p)}, \overline{
\psi'(p)})$ for $f^\ast\in \mathcal{F}^\ast$, $\phi(p) = \phi_1(p) \wedge \phi_2(p)$, or $\phi(p) = \phi_1(p) \vee \phi_2(p)$ both $\phi_1(p)$ and $\phi_2(p)$ are ssc and positive in $p$, and each $\phi'_i(p)$ in $\overline{\phi'(p)}$ is ssc
and positive in $p$, and each $\psi'_i(p)$ in $\overline{\psi'(p)}$ is sso
and negative in $p$. Hence, the claim follows by the inductive hypothesis, and Lemma \ref{cor: open upset to open upset for white box}(2) if $f^\ast \in \mathcal{F}$, Lemma \ref{Blk:Diam:c:Clsd:Lemma} if $f^\ast \in \mathcal{F}^\ast \setminus \mathcal{F}$ and the fact that meets and finite joins of closed elements are closed, respectively.

Similarly, if $\psi(p) = g^\ast(\overline{\psi'(p)}, \overline{\phi'(p)})$ for $g^\ast\in \mathcal{G}^\ast$, $\psi(p) = \psi_1(p) \vee \psi_2(p)$ or $\psi(p) = \psi_1(p) \wedge \psi_2(p)$, then both $\psi_1(p)$ and $\psi_2(p)$ are sso and negative in $p$, and each $\phi'_i(p)$ in $\overline{\phi'(p)}$ is ssc
and positive in $p$, and each $\psi'_i(p)$ in $\overline{\psi'(p)}$ is sso
and negative in $p$. Hence,  the claim follows by the inductive hypothesis, and Lemma \ref{cor: open upset to open upset for white box}(1) if $g^\ast \in \mathcal{G}$, Lemma \ref{Blk:Diam:c:Clsd:Lemma} if $g^\ast \in \mathcal{G}^\ast \setminus \mathcal{G}$ and the fact that joins and finite meets of open elements are open, respectively.

Item (2) can similarly be proved by simultaneous induction on negative $\phi$ and positive $\psi$. 
%
\end{proof}
\begin{lemma}\label{Esakia:Syn:Clsd:Opn:Lemma} Let $\phi(p)$ be ssc, $\psi(p)$ sso, $V$ an admissible assignment, $\mathcal{D} \subseteq K(\bbas)$ be down-directed, and $\mathcal{U} \subseteq O(\bbas)$ be up-directed.
\begin{enumerate}
\item
    \begin{enumerate}
    \item If $\phi(p)$ is positive in $p$, then $\phi^V_p(\bigwedge\mathcal{D}) = \bigwedge\{ \phi^V_p (d)\mid d\in \mathcal{D}\}$, and
    \item if $\psi(p)$ is negative in $p$, then $\psi^V_p(\bigwedge\mathcal{D}) = \bigvee\{\psi^V_p (d)\mid d\in \mathcal{D}\}$.
    \end{enumerate}
\item
    \begin{enumerate}
    \item If $\phi(p)$ is negative in $p$, then $\phi^V_p(\bigvee\mathcal{U}) = \bigwedge\{\phi^V_p (u)\mid u\in \mathcal{U}\}$, and
    \item if $\psi(p)$ is positive in $p$, then $\psi^V_p(\bigvee\mathcal{U}) = \bigvee\{ \psi^V_p (u)\mid u\in \mathcal{U}\}$.
    \end{enumerate}
\end{enumerate}
\end{lemma}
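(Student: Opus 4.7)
The proof proceeds by simultaneous induction on the complexity of $\phi$ and $\psi$, mirroring the structure of the proof of the preceding Lemma \ref{Syn:Opn:Clsd:Appld:ClsdUp:Lemma}. As in the proof of that lemma, we may suppress reference to the admissible assignment $V$ and write simply $\phi$ and $\psi$ for $\phi^V_p$ and $\psi^V_p$. The base cases ($\top$, $\bot$, propositional variables $q \neq p$, nominals, co-nominals, and the variable $p$ itself) are either constant in $p$ (so both sides trivially coincide) or reduce to the identity function (in which case the statement is tautological). The Boolean cases $\phi = \phi_1 \wedge \phi_2$, $\phi = \phi_1 \vee \phi_2$, and dually for $\psi$, follow from the inductive hypothesis together with the fact that directed meets/joins commute with finite meets/joins, using the monotonicity (resp.\ antitonicity) in $p$ of the components to keep the required direction of monotonicity intact.

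The heart of the argument lies in the inductive steps for the unary slanted connectives and their residuals. Consider for instance case (1a) with $\phi(p) = f^\ast(\overline{\phi'(p)}, \overline{\psi'(p)})$ where $f^\ast \in \mathcal{F}^\ast$, each $\phi'$ in $\overline{\phi'(p)}$ is ssc and positive in $p$, and each $\psi'$ in $\overline{\psi'(p)}$ is sso and negative in $p$. For every $d \in \mathcal{D}$, Lemma \ref{Syn:Opn:Clsd:Appld:ClsdUp:Lemma} guarantees that $\phi'^V_p(d) \in K(\bbas)$ and $\psi'^V_p(d) \in O(\bbas)$. Moreover, since $\mathcal{D}$ is down-directed and each $\phi'$ (resp.\ $\psi'$) is positive (resp.\ negative) in $p$, the family $\{\phi'^V_p(d) \mid d \in \mathcal{D}\}$ is down-directed in $K(\bbas)$ and $\{\psi'^V_p(d) \mid d \in \mathcal{D}\}$ is up-directed in $O(\bbas)$. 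By the inductive hypothesis, $\phi'(\bigwedge \mathcal{D}) = \bigwedge\{\phi'(d) \mid d \in \mathcal{D}\}$ and $\psi'(\bigwedge \mathcal{D}) = \bigvee\{\psi'(d) \mid d \in \mathcal{D}\}$. We may then apply Lemma \ref{lemma:uncongenial for the whites} (if $f^\ast \in \mathcal{F}$) or Lemma \ref{lemma: uncongenial for the blacks} (if $f^\ast \in \mathcal{F}^\ast \setminus \mathcal{F}$) to pull the directed meet outside of $f^\ast$, obtaining the desired identity $\phi(\bigwedge \mathcal{D}) = \bigwedge\{\phi(d) \mid d \in \mathcal{D}\}$. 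The other three sub-cases (including all the cases where $\psi = g^\ast(\overline{\psi'}, \overline{\phi'})$) follow by the obvious order-theoretic and polarity-theoretic variations of this argument.

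The main obstacle is ensuring that the families to which the uncongeniality lemmas are applied really are $\epsilon$-directed and consist of elements of the correct topological type ($K(\bbas)^{\epsilon(i)}$ or $O(\bbas)^{\epsilon(i)}$). This is precisely the reason for pairing the induction with Lemma \ref{Syn:Opn:Clsd:Appld:ClsdUp:Lemma}: the latter supplies the topological type of each coordinate input, while the polarity assumption on the subformula, combined with the directedness of $\mathcal{D}$ (or $\mathcal{U}$), supplies the needed direction. Notice that here it is crucial that $\phi$ be \emph{strictly} syntactically closed and $\psi$ strictly syntactically open in the sense of Definition \ref{strictlySyn:Opn:Clsd:Definition}: this restriction precludes the appearance of connectives $f \in \mathcal{F}$ on the syntactically open side and $g \in \mathcal{G}$ on the syntactically closed side, for which the appropriate topological analogues of Lemma \ref{cor: open upset to open upset for white box} would fail (cf.~Remark \ref{rem: counterexample}). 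This is exactly the point at which the proof diverges from the corresponding argument in \cite[Lemma 10.7]{CoPa-nondist}, and it is what forces us to work with the narrower ssc/sso classes in the slanted setting.
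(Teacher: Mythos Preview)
Your proposal is correct and follows the same overall approach as the paper's proof: simultaneous structural induction, with the $f^\ast$/$g^\ast$ cases handled by combining Lemma~\ref{Syn:Opn:Clsd:Appld:ClsdUp:Lemma} (for the topological type and directedness of the coordinate families) with Lemmas~\ref{lemma:uncongenial for the whites} and~\ref{lemma: uncongenial for the blacks}. Two places deserve a little more care. First, in the non-distributive setting the case $\phi = \phi_1 \vee \phi_2$ is not just ``directed meets commute with finite joins''; the paper invokes the restricted distributivity law \cite[Lemma~3.2]{GH01}, which applies because each $\{\phi_i(d)\mid d\in\mathcal{D}\}$ is a down-directed family of closed elements. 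Second, the uncongenial lemmas yield $\bigwedge\{f^\ast(\phi'_1(d_1),\ldots,\psi'_{n}(d_n))\mid d_h\in\mathcal{D}\}$ over \emph{independent} choices of the $d_h$, not over the diagonal $d\in\mathcal{D}$; you still need to argue that these two meets coincide. The paper does this via denseness and compactness (extracting a finite sub-meet below a given open $o$, then using down-directedness of $\mathcal{D}$ to find a common $d^\ast$), though a direct cofinality argument---each tuple $(d_1,\ldots,d_n)$ is dominated by a diagonal term once one picks $d^\ast\in\mathcal{D}$ with $d^\ast\leq d_h$ for all $h$---would also suffice.
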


\begin{proof}
We prove (1) by simultaneous induction on $\phi$ and $\psi$. The base cases of the induction on $\phi$ are those when it is of the form $\top$, $\bot$, $p$, a propositional variable $q$ other than $p$, or $\nomi$, and for $\psi$ those when it is of the form $\top$, $\bot$, a propositional variable $q$ other than $p$ or $\cnomm$. In each of these cases the claim is trivial.

For the remainder of the proof we will omit reference to the assignment $V$, and simply write $\phi$ and $\psi$ for $\phi^V_p$ and $\psi^V_p$, respectively.

In the cases in which $\phi(p) = \phi_1(p) \vee \phi_2(p)$, $\phi(p) = \phi_1(p) \wedge \phi_2(p)$, $\phi(p) = f^\ast(\overline{\phi'(p)},\overline{\psi'(p)})$, $\psi(p) = \psi_1(p) \wedge \psi_2(p)$, $\psi(p) = \psi_1(p) \vee \psi_2(p)$, $\psi(p) = g^\ast(\overline{\psi'(p)},\overline{\phi'(p)})$,  we have that $\phi_1$ and $\phi_2$
are ssc and positive in $p$ and $\psi_1$ and $\psi_2$
are sso and negative in $p$, and moreover, each $\phi'_i(p)$ in $\overline{\phi'(p)}$ is ssc and positive in $p$, and each $\psi'_j(p)$ in $\overline{\psi'(p)}$ is sso and negative in $p$.

Hence, when $\phi(p) = \phi_1(p) \wedge \phi_2(p)$ and  $\psi(p) = \psi_1(p) \vee \psi_2(p)$, the claim follows by the inductive hypothesis and the associativity of, respectively, meet and join.

If $\phi(p) = \phi_1(p) \vee \phi_2(p)$, then
\begin{center}
\begin{tabular}{r c ll}
$\phi(\bigwedge \mathcal{D})$ &$ = $& $\phi_1(\bigwedge \mathcal{D})\vee \phi_2(\bigwedge \mathcal{D})$ & \\
&$ = $& $\bigwedge \{\phi_1(c_i)\mid c_i\in \mathcal{D}\}\vee \bigwedge \{\phi_2(c_i)\mid c_i\in \mathcal{D}\}$ & (induction hypothesis)\\
&$ = $& $\bigwedge \{\phi_1(c_i)\vee \phi_2(c_j)\mid c_i, c_j\in \mathcal{D}\}$ & ($\ast$)\\
&$ = $& $\bigwedge \{\phi_1(c)\vee \phi_2(c)\mid c\in \mathcal{D}\}$ & ($\phi$ monotone and $\mathcal{D}$ down-directed)\\
&$ = $& $\bigwedge \{\phi(c)\mid c\in \mathcal{D}\}$, & \\
\end{tabular}
\end{center}
where the equality marked with ($\ast$) follows from a restricted form of distributivity enjoyed by canonical extensions of general bounded lattices (cf.\ \cite[Lemma 3.2]{GH01}), applied to the family $\{A_1, A_2\}$ such that $A_i: = \{\phi_i(c_j)\mid c_j\in \mathcal{D}\}$ for $i\in \{1, 2\}$. Specifically, the monotonicity in $p$ of $\phi_i(p)$ and $\mathcal{D}$ being down-directed imply that $A_1$ and $A_2$ are down-directed subsets, which justifies the application of \cite[Lemma 3.2]{GH01}.

 If $\phi(p) = f^\ast(\overline{\phi'(p)},\overline{\psi'(p)})$, with $f^\ast \in \mathcal{F}^\ast$, then
 
 \begin{center} $\phi(\wedge \mathcal{D}) = f^\ast(\overline{\phi'(\wedge \mathcal{D})}, \overline{\psi'(\wedge \mathcal{D}})) = f^\ast(\bigwedge_{d \in \mathcal{D}}\phi'_1(d), \ldots,\bigwedge_{d \in \mathcal{D}}\phi'_k(d), \bigvee_{d \in \mathcal{D}} \psi'_{k+1}(d),\ldots,\bigvee_{d \in \mathcal{D}} \psi'_{n_{f^\ast}}(d) ). $
 \end{center}
%
%
%
The second equality above holds by the inductive hypothesis. To finish the proof, we need to show that
\begin{center}
$f^\ast(\bigwedge_{d \in \mathcal{D}}\phi'_1(d), \ldots,\bigvee_{d \in \mathcal{D}} \psi'_{n_{f^\ast}}(d) ) = \bigwedge_{d \in \mathcal{D}} f^\ast( \overline{\phi'(d)},\overline{\psi'(d)}).$
\end{center}
The `$\leq$' direction immediately follows from the monotonicity of $f^\ast$. For the converse inequality, by denseness, it is enough to show that if $o\in O(\bbas)$ and $f^\ast(\bigwedge_{d \in \mathcal{D}}\phi'_1(d), \ldots,\bigvee_{d \in \mathcal{D}} \psi'_{f^\ast}(d) ) \leq o$, then $ \bigwedge_{d \in \mathcal{D}} f^\ast( \overline{\phi'(d)},\overline{\psi'(d)}) \leq o$. By Lemmas \ref{Syn:Opn:Clsd:Appld:ClsdUp:Lemma}(1) and \ref{lemma:uncongenial for the whites} or \ref{lemma: uncongenial for the blacks} according to whether $f^\ast \in \mathcal{F}$ or $f^\ast \in \mathcal{F}^\ast \setminus \mathcal{F}$, we have:

\begin{center}
$f^\ast(\bigwedge_{d \in \mathcal{D}}\phi'_1(d), \ldots,\bigvee_{d \in \mathcal{D}} \psi'_{n_{f^\ast}}(d) )= \bigwedge\{f^\ast( \phi'_1(d_1),\ldots,\psi'_{n_{f^\ast}}(d_{n_{f^\ast}}))\mid d_h\in \mathcal{D} \mbox{ for every } 1\leq h\leq n_{f^\ast}\}$.
\end{center}
By compactness (which can be applied by Lemmas \ref{cor: open upset to open upset for white box} or \ref{Blk:Diam:c:Clsd:Lemma}, again according to the nature of $f^\ast$,  and \ref{Syn:Opn:Clsd:Appld:ClsdUp:Lemma}(1)),
\begin{center}
$\bigwedge\{f^\ast( \phi'_1(d^{(i)}_1),\ldots,\psi'_{n_{f^\ast}}(d^{(i)}_{n_{f^\ast}}))\mid 1\leq i\leq n\}\leq o$.
\end{center}
Let $\mathcal{D}': = \{d^{(i)}_h\mid 1\leq i\leq n\mbox{ and } 1\leq h\leq n_{f^\ast}\}$. Since $\mathcal{D}$ is down-directed, $d^\ast\leq \bigwedge \mathcal{D}'$ for some $d^\ast\in \mathcal{D}$. Then, by monotonicity of $f^\ast$ (Recall the notations at the beginning of \ref{Subsec:PrelimDef}) and since each $\phi'_i(p)$ is positive in $p$ and each $\psi'_j(p)$ is negative in $p$, the following chain of inequalities holds
\begin{center}
\begin{tabular}{r c l}
$\bigwedge_{d \in \mathcal{D}} f^\ast( \overline{\phi'(d)},\overline{\psi'(d)})$ & $\leq$ &
$f^\ast(\overline{\phi'(d^\ast)},\overline{\psi'(d^\ast)})$\\
& $\leq$ & $ f^\ast(\phi'_1(\bigwedge_{1\leq i\leq n}d^{(i)}_1),\ldots, \psi'_{n_{f^\ast}}(\bigwedge_{1\leq i\leq n}d^{(i)}_{n_{f^\ast}}))$\\
& $\leq$ & $ \bigwedge\{f^\ast( \phi'_1(d^{(i)}_1)\ldots,\psi'_{n_{f^\ast}}(d^{(i)}_{n_{f^\ast}}))\mid 1\leq i\leq n\}$\\
& $\leq$ & $o.$
\end{tabular}
\end{center}

The remaining cases are similar, and left to the reader.

Thus the proof of item (1) is concluded. Item (2) can be proved similarly by simultaneous induction on $\phi$ negative in $p$ and $\psi$ positive in $p$.
\end{proof}

\paragraph{Proof of the Righthanded Ackermann lemma for admissible assignments (Lemma \ref{Ackermann:Dscrptv:Right:Lemma})}
To keep the notation uncluttered, we will simply write $\beta_{i}$ and $\gamma_{i}$ for ${\beta_i}^V_p$ and ${\gamma_i}^V_p$, respectively. The implication from bottom-to-top follows by the monotonicity of the $\beta_i$ and the antitonicity of the $\gamma_i$ in $p$. Indeed, if $\alpha(V) \leq u$, then, for each $1 \leq i \leq n$, ${\beta_i}(\alpha(V)) \leq {\beta_i}(u) \leq {\gamma_i}(u) \leq {\gamma_i}(\alpha(V))$.

For the sake of the converse implication assume that ${\beta_i}(\alpha(V)) \leq {\gamma_i}(\alpha(V))$ for all $1 \leq i \leq n$. By lemma \ref{Syn:Opn:Clsd:Appld:ClsdUp:Lemma}, $\alpha(V)\in \kbbas$. Hence $\alpha(V) = \bigwedge \{a \in \bba \mid \alpha(V) \leq a \}$, making it the meet of a down-directed subset of $\kbbas$.  Thus, for any $1 \leq i \leq n$, we have
\[
\beta_i(\bigwedge \{a \in \bba \mid \alpha(V) \leq a \}) \leq \gamma_i(\bigwedge \{a \in \bba \mid \alpha(V) \leq a \}).
\]
Since $\gamma_i$ is syntactically open and negative in $p$, and $\beta_i$ is syntactically closed and positive in $p$, we may apply lemma \ref{Esakia:Syn:Clsd:Opn:Lemma} and equivalently obtain
\[
\bigwedge \{\beta_i(a)  \mid a \in \bba, \: \alpha(V) \leq a \} \leq \bigvee \{\gamma_i(a) \mid a \in \bba, \: \alpha(V) \leq a \}.
\]
By lemma \ref{Syn:Opn:Clsd:Appld:ClsdUp:Lemma}, $ \beta_i(a)\in K(\bbas)$ and  $\gamma_i(a)\in O(\bbas)$ for each $a \in \bba$. Hence by compactness,
\[
\beta_i(b_1) \wedge \cdots \beta_i(b_k) \leq \gamma_i(a_1) \vee \cdots \vee \gamma_i(a_m).
\]
for some $a_1, \ldots, a_m, b_1, \ldots b_k \in \bba$ with $\alpha(V) \leq a_j$, $1 \leq j \leq m$, and $\alpha(V) \leq b_h$, $1 \leq h \leq k$.
Let $a_i = b_1 \wedge \cdots \wedge b_{k} \wedge a_1 \wedge \cdots \wedge a_m$. Then $\alpha(V) \leq a_i \in \bba$. By the monotonicity of $\beta_i$ and the antitonicity of $\gamma_i$ it follows that
\[
\beta_i(a_i) \leq \gamma_i(a_i).
\]
Now, letting $u = a_1 \wedge \cdots \wedge a_n$, we have $\alpha(V) \leq u \in \bba$, and by the  monotonicity of the $\beta_i$ and the antitonicity of the $\gamma_i$ we get that
\[
\beta_i(u) \leq \gamma_i(u) \textrm{ for all } 1 \leq i \leq n.
\]

\paragraph{Proof of the Lefthanded Ackermann lemma for admissible assignments (Lemma \ref{Ackermann:Dscrptv:Left:Lemma})}
As in the previous lemma we will write $\beta_{i}$ and $\gamma_{i}$ for ${\beta_i}^V_p$ and ${\gamma_i}^V_p$, respectively. The implication from bottom to top follows by the antitonicity  of the $\beta_i$ and the monotonicity of the $\gamma_i$.

For the sake of the converse implication assume that ${\beta_i}^V_p(\alpha(V)) \leq {\gamma_i}^V_p(\alpha(V))$ for all $1 \leq i \leq n$. But $\alpha$ is syntactically open and (trivially) negative in $p$, hence by lemma \ref{Syn:Opn:Clsd:Appld:ClsdUp:Lemma}(2), $\alpha(V)\in O(\bbas)$, i.e.\ $\alpha(V) = \bigvee\{a \in \bba \mid a \leq  \alpha(V)\}$. Thus, for any $1 \leq i \leq n$, it is the case that
\[
{\beta_i}(\bigvee\{a \in \bba \mid a \leq  \alpha(V)\}) \leq {\gamma_i}(\bigvee\{a \in \bba \mid a \leq  \alpha(V)\}).
\]
Hence by lemma \ref{Esakia:Syn:Clsd:Opn:Lemma} (3) and (4)
\[
\bigwedge \{ {\beta_i}(a) \mid a \in \bba, a \leq  \alpha(V) \}  \leq \bigvee \{ \gamma_i(a) \mid a \in \bba,  a \leq  \alpha(V) \}.
\]
The proof now proceeds like that of lemma \ref{Ackermann:Dscrptv:Right:Lemma}.

\end{document}